\documentclass[11pt,a4paper,reqno,svgnames]{amsart}
\usepackage{amsthm,amssymb,amsfonts,mathtools}
\usepackage{mathtools}
\mathchardef\mhyphen="2D
\usepackage{extarrows}
\usepackage[color]{changebar}
\cbcolor{red}
\usepackage{scalefnt}
\usepackage{latexsym}
\usepackage{tikz}
\usepackage{scalefnt}

\usepackage{youngtab}
\usepackage{latexsym}
\usepackage{amssymb}
\usepackage{mathrsfs}
\usepackage{amsthm}
\usepackage{enumitem}
\usepackage{fullpage}
\usepackage{stmaryrd}
 \usepackage{color}
\usepackage{tikz}
\usepackage{xcolor}
\usepackage[plainpages=false, pdfpagelabels,  colorlinks=true, pdfstartview=FitV, linkcolor=DarkBlue, citecolor=DarkRed, urlcolor=blue]{hyperref}

\usepackage{cleveref}

\crefname{definition}{Definition}{Definitions}
\crefname{theorem}{Theorem}{Theorems}
\crefname{proposition}{Proposition}{Propositions}
\crefname{lemma}{Lemma}{Lemmas}
\crefname{corollary}{Corollary}{Corollaries}
\crefname{conj}{Conjecture}{Conjectures}
\crefname{section}{Section}{Sections}
\crefname{subsection}{Subsection}{Subsections}
\crefname{eg}{Example}{Examples}
\crefname{figure}{Figure}{Figures}
\crefname{rem}{Remark}{Remarks}
\crefname{rmk}{Remark}{Remarks}
\crefname{equation}{equation}{equation}

\Crefname{definition}{Definition}{Definitions}
\Crefname{theorem}{Theorem}{Theorems}
\Crefname{proposition}{Proposition}{Propositions}
\Crefname{lemma}{Lemma}{Lemmas}
\Crefname{corollary}{Corollary}{Corollaries}
 \Crefname{section}{Section}{Sections}
\Crefname{subsection}{Subsection}{Subsections}
\Crefname{eg}{Example}{Examples}
\Crefname{figure}{Figure}{Figures}
\Crefname{rem}{Remark}{Remarks}
\Crefname{rmk}{Remark}{Remarks}

\theoremstyle{plain}
\newtheorem{theorem}{Theorem}[section]
\newtheorem{proposition}[theorem]{Proposition}
\newtheorem{lemma}[theorem]{Lemma}
\newtheorem{corollary}[theorem]{Corollary}

\newtheorem*{Acknowledgements*}{Acknowledgements}

\theoremstyle{definition}
\newtheorem{definition}[theorem]{Definition}
\newtheorem{example}[theorem]{Example}
\newtheorem{remark}[theorem]{Remark}

 \DeclareMathOperator{\Std}{Std}

\DeclareMathOperator{\rad}{rad}

\DeclareMathOperator{\res}{res}

\newcommand{\stw}{\mathsf{w}} 
\newcommand{\sts}{\mathsf{s}}   
 \newcommand{\stu}{\mathsf{u}}
\newcommand{\stv}{\mathsf{v}}
\newcommand{\stt}{\mathsf{t}}  
\newcommand{\ZZ}{\mathbb{Z}}   
\newcommand{\QQ}{\mathbb{Q}}

\newcommand{\FF}{\mathbb{F}}
\newcommand{\suchthat}{\;\ifnum\currentgrouptype=16 \middle\fi|\;} 
\newcommand{\obpointla}{(\lambda,k)}

\newcommand{\pointmu}{(\mu,k)}

\newcommand{\pointla}{(\lambda,k)}

\renewcommand{\ge}{\geqslant}
\renewcommand{\ge}{\geqslant}
\renewcommand{\geq}{\geqslant}
\renewcommand{\le}{\leqslant}
\renewcommand{\leq}{\leqslant}
\renewcommand{\unrhd}{\trianglerighteqslant}
\renewcommand{\unlhd}{\trianglelefteqslant}
\renewcommand{\succeq}{\succcurlyeq}

\newcommand\algebra[2]{{P}_{#1}^{#2}}
\newcommand\subalgebra{P^R_{\vartriangleright  (\lambda,k)}}
\newcommand\standard[2]{{\Delta}_{#1}^{#2}}
\newcommand\simple[2]{{L}_{#1}^{#2}}
\def\hods{\unskip\kern.55em\ignorespaces}

 \begin{document}
\usetikzlibrary{matrix,arrows,decorations.pathreplacing,backgrounds,decorations.markings}

\title{Simple modules for the partition algebra   and \\ monotone convergence of  Kronecker coefficients  }
\author{C. Bowman}
\author{M. De Visscher}
\author{J. Enyang}
\address{Department of Mathematics, City University London, Northampton Square, London EC1V 0HB, United Kingdom }
 
\maketitle

\begin{abstract}
We construct bases of the simple modules for partition algebras which are indexed by paths in an alcove geometry.  
This allows us to give a concrete interpretation (and new proof) of  the monotone convergence property for Kronecker coefficients 
using  stratifications of the cell modules of the partition algebra.  
\end{abstract}

\section*{Introduction}

A fundamental problem in the representation theory of the symmetric group is to describe the coefficients in the decomposition of the tensor product of two Specht modules.     In \cite{MR3314819}, the first two authors and Orellana proposed a new approach to this problem by using the Schur--Weyl duality between the symmetric group $\mathfrak{S}_n$ and the partition algebra  $P_{2k}^\mathbb{Q}(n)$.  The Kronecker coefficients are interpreted as the decomposition multiplicities which arise in restricting a simple $P_{2k}^\mathbb{Q}(n)$-module   to a Young subalgebra.  

The purpose of this article is to construct bases of the simple modules of partition algebras 
   and begin exploring the applications of these bases to the study of the Kronecker coefficients.  
We proceed by  embedding the branching graph  of the partition algebra into a parabolic alcove geometry of type $A_{r-1} \subseteq A_{r}$.  
We then construct bases of the simple   modules which are indexed by paths in this graph 
 satisfying a certain geometric condition.  
In the case that the algebra is semisimple, the resulting bases are equal to those constructed in \cite{MR3092697}.  
 That this geometry governs the representation theory of the partition algebras is perhaps surprising,
  but could be explainable through interactions with the associated parabolic category $\mathcal{O}$, via Deligne's tensor category (see \cite{Aiz} and \cite{CO11}).  


We turn now to the Kronecker coefficients. One   attempt to understand  these coefficients is via  their limiting behaviour and stability properties.  
Murnaghan observed that as  we increase the length of the first row of the indexing partitions,
 the sequence of Kronecker coefficients obtained stabilises. 
  The limits of these sequences are known as the \emph{stable  Kronecker coefficients}.
This stability has been proven 
using invariant theory \cite{MR0095209},  
 geometric methods \cite[Section 3.4]{MR1243152},
  and 
    by means of vertex operators \cite[Section 3]{MR1128013}.  
%
%
%
%
%
%
In \cite{MR3314819} we observed that   increasing the length of the first row of the indexing partitions corresponds to increasing the parameter $n$ for the partition algebra. The stability is then naturally explained by the fact that the partition algebra is semisimple for large values of $n$. Moreover, the stable Kronecker coefficients are  given a concrete 
representation theoretic interpretation as the decomposition multiplicities of the restriction of a cell module  for the partition algebra to a Young subalgebra.  

The bases  constructed in this paper allow  us, for  the first time, to study simple modules  for partition algebras directly.  
 In the past,   one had to   study the simple modules indirectly via the  cell modules of the partition algebra.  
Correspondingly, almost all known formulas \cite{111,ROSAANDCO,Rosas,RW} for computing Kronecker products proceed indirectly 
 by  
(implicitly) expressing the Kronecker coefficient as a signed sum of  stable Kronecker coefficients. 
 Our main result gives explicit bases for the simple modules; the action of the generators of the partition algebra on these bases have been described in \cite{MR3092697}. This gives a direct approach to studying the  non-stable  Kronecker coefficients  (which we will explore in further work) and for studying Murnaghan's stability   phenomena (see below).  

 In \cite{MR1243152}  Brion  showed  that,  as  we increase the length of the first row of the indexing partitions,
 the sequence of Kronecker coefficients obtained is {\em weakly increasing} (in addition to having a stable limit).  
 This monotone convergence property has been proved using geometric methods  \cite{MR1243152} and 
 by analysing   integer points in polyhedra \cite{Stembridge}.  In this paper, 
 we provide a new proof of this monotone convergence property 
   by providing a  manifestly positive interpretation of the    differences between adjacent terms  in this sequence.
 Namely, we provide a stratification of 
   a given cell module; the layers of this stratification  
  decompose according to these differences.  
 This   uses  only simple methods in combinatorial representation theory and  answers a question posed to us by Briant.  

 \bigskip

\textbf{Notation:} 
The focus of this paper is the representation theory of the partition algebra over the field $\mathbb{Q}$ and with integer parameter $n$. 
However, we wish to use results from Enyang \cite{MR3092697} on the seminormal representations of the partition algebra with parameter $z$ over the field $\mathbb{Q}(z)$. 
We will relate these in the usual manner.
Let $z$ be a variable and define $R:=\mathbb{Z}[z]$ with field of fraction $\mathbb{F}:=\mathbb{Q}(z)$.
Given $n\in \mathbb{Z}_{\geq 0}$, we define $\mathcal{O}_n$ to be the localisation of $R$ at the prime ideal $\mathfrak{p}=(z-n)$. Then we have natural embeddings $R\hookrightarrow  \mathcal{O}_n \hookrightarrow \mathbb{F}$ and  projection map $\pi_n : \mathcal{O}_n \rightarrow \mathbb{Q}$ given by specialising to $z=n$, giving both $\mathbb{F}$ and $\mathbb{Q}$ the structure of $\mathcal{O}_n$-bimodules.
 Throughout the paper, all modules for the partition algebras (over $R$, $\mathbb{F}$, $\mathcal{O}_n$ and $\mathbb{Q}$) will be right modules.

\bigskip

  The first three sections are not new but recall all the necessary background.
In Section 1, we define the partition algebra over $R$ and recall its cellular structure. In particular, we recall the properties of the Murphy-type basis constructed inductively on the branching graph by Goodman and Enyang in \cite{EG:2012}. In Section 2, we consider the representations of the partition algebra over $\mathbb{F}$. In particular, we recall some of the properties of the seminormal bases for cell modules constructed by Enyang \cite{MR3092697}. In Section 3--7 we work over the field $\mathbb{Q}$.
In Section 3, we apply the general framework developed by Mathas to generalise the seminormal bases to the partition algebra over $\mathbb{Q}$ with integer parameter.
In Section 4 we introduce a reflection geometry (for a fixed parameter $n$) on the branching graph for the partition algebra and reinterpret results due to Martin on the representation theory of the partition algebra over $\mathbb{Q}$ in this geometrical setting. We also develop properties of this geometry which will be needed in the following sections.
In Section 5 we study the restriction of cell and simple modules. This will be used in Section 6 to describe a basis for the radical of cell modules, and hence also for simple modules. The main results are Theorem 6.5 and Corollary 6.6. These are then used in Section 7 to give a new interpretation (and a new proof) of the monotone convergence of Kronecker coefficients.

\begin{Acknowledgements*}
The authors 
  are grateful for the financial support received from 
the {Royal Commission for the Exhibition of 1851} and  {EPSRC}  {grant  EP/L01078X/1}. 
The authors would also like to thank Emmanuel Briant for some useful discussions. 
\end{Acknowledgements*}

\section{The Partition algebra: Branching graph and Cellularity}

Let $z$ be a variable and set  $R=\mathbb{Z}[z]$. 
For a fixed $k\in\mathbb{Z}_{\geq 0}$, we define the partition algebra $\algebra{2k}{R}(z)$ to be the set of $R$-linear combinations of set-partitions of $\{1,2,\dots,k,\bar1,\bar2,\dots,\bar k\}$. (For $k=0$ we set $P_0^R(z)=R$.) We call each connected component of a set-partition a {\sf block}.
 For example,
\[
d=\{\{\overline1, \overline2, \overline4,  {2},  {5}\}, \{\overline3\}, \{\overline5, \overline6, \overline7,  {3},  {4},
 {6},  {7}\}, \{\overline8,  {8}\}, \{ {1}\}\}
\]
is a set-partition (for $k=8$) with 5  blocks.

A  set-partition can be represented 
 by a  diagram consisting of a frame with $k$ distinguished points on the northern and southern boundaries, which we call vertices.  We number the southern vertices from left to right by $1,2,\ldots, k$ and the northern vertices similarly by $\bar{1},\bar{2},\ldots, \bar{k}$ and connect two vertices by a path if they belong to the same block.  Note that such a diagram is not uniquely defined, two diagrams representing the set-partition $d$ above are given in \hyperref[2diag]{Figure~\ref*{2diag}}.

\begin{figure}[ht]
\begin{tikzpicture}[scale=0.5]
  \draw (0,0) rectangle (8,3);
  \foreach \x in {0.5,1.5,...,7.5}
    {\fill (\x,3) circle (2pt);
     \fill (\x,0) circle (2pt);}
  \begin{scope} 
    \draw (0.5,3) -- (1.5,0);
    \draw (7.5,3) -- (7.5,0);
    \draw (4.5,3) -- (2.5,0);
    \draw (0.5,3) arc (180:360:0.5 and 0.25);
    \draw (1.5,3) arc (180:360:1 and 0.25);
     \draw (4.5,0) arc (0:180:1.5 and 1);
    \draw (5.5,0) arc (0:180:1 and .7);
    \draw (3.5,0) arc (0:180:.5 and .25);
    \draw (6.5,0) arc (0:180:0.5 and 0.5);
    \draw (4.5,3) arc (180:360:0.5 and 0.25);
    \draw (5.5,3) arc (180:360:0.5 and 0.25);
      \draw (2.5,-0.5) node {$3$};   \draw (2.5,3.5) node {$\bar{3}$};
                  \draw (1.5,-0.5) node {$2$};   \draw (1.5,3.5) node {$\bar{2}$};
                           \draw (0.5,-0.5) node {$1$};   \draw (0.5,3.5) node {$\bar{1}$};
         \draw (3.5,-0.5) node {$4$};   \draw (3.5,3.5) node {$\bar{4}$};
                  \draw (4.5,-0.5) node {$5$};   \draw (4.5,3.5) node {$\bar{5}$};
                           \draw (5.5,-0.5) node {$6$};   \draw (5.5,3.5) node {$\bar{6}$};
         \draw (6.5,-0.5) node {$7$};   \draw (6.5,3.5) node {$\bar{7}$};         \draw (7.5,-0.5) node {$8$};   \draw (7.5,3.5) node {$\bar{8}$};
   \end{scope}
\end{tikzpicture}
\quad \quad 
\begin{tikzpicture}[scale=0.5]
  \draw (0,0) rectangle (8,3);
  \foreach \x in {0.5,1.5,...,7.5}
    {\fill (\x,3) circle (2pt);
     \fill (\x,0) circle (2pt);}
  \begin{scope}     \draw (0.5,3) -- (1.5,0);
    \draw (7.5,3) -- (7.5,0);
    \draw (5.5,3) -- (6.5,0);  \draw (1.5,0) -- (3.5,3);\draw (3.5,3) -- (4.5,0);
    \draw (0.5,3) arc (180:360:0.5 and 0.25);
     \draw (5.5,0) arc (0:180:1 and .7);
    \draw (3.5,0) arc (0:180:.5 and .25);
    \draw (6.5,0) arc (0:180:0.5 and 0.5);
    \draw (4.5,3) arc (180:360:1 and 0.7);
    \draw (5.5,3) arc (180:360:0.5 and 0.25);
          \draw (2.5,-0.5) node {$3$};   \draw (2.5,3.5) node {$\bar{3}$};
                  \draw (1.5,-0.5) node {$2$};   \draw (1.5,3.5) node {$\bar{2}$};
                           \draw (0.5,-0.5) node {$1$};   \draw (0.5,3.5) node {$\bar{1}$};
         \draw (3.5,-0.5) node {$4$};   \draw (3.5,3.5) node {$\bar{4}$};
                  \draw (4.5,-0.5) node {$5$};   \draw (4.5,3.5) node {$\bar{5}$};
                           \draw (5.5,-0.5) node {$6$};   \draw (5.5,3.5) node {$\bar{6}$};
         \draw (6.5,-0.5) node {$7$};   \draw (6.5,3.5) node {$\bar{7}$};         \draw (7.5,-0.5) node {$8$};   \draw (7.5,3.5) node {$\bar{8}$};
  \end{scope}
\end{tikzpicture}
  \caption{Two representatives of the set-partition $d$.}
\label{2diag}
\end{figure}
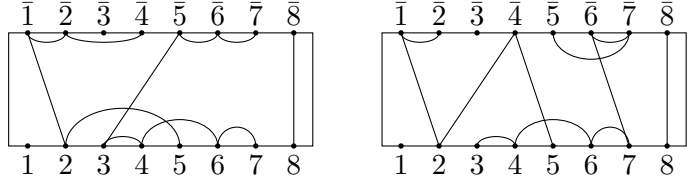

We define the product $x  y$ of two diagrams $x$ and $y$ using
the concatenation of $x$ above $y$, where we identify the southern
vertices of $x$ with the northern vertices of $y$.  If there are $t$
connected components consisting only of middle vertices, then the
product is set equal to $z^t$ times the diagram with the middle
components removed. Extending this by linearity defines a
multiplication on $\algebra{2k}{R}(z)$.

We let $\algebra{2k-1}{R}(z)$ denote the subspace of $\algebra{2k}{R}(z)$
with basis given by all set-partitions such that $k$ and $\overline{k}$
belong to the same block.  The subspace $\algebra{2k-1}{R}(z)$ is closed
under the multiplication and therefore is a subalgebra of
$\algebra{2k}{R}(z)$. We also view $P_{2k}^R(z)$ as a subalgebra of $P_{2k+1}^R(z)$ by adding to each diagram an additional block consisting of $\{2k+1, \overline{2k+1}\}$. So we obtain a tower of algebras
$$P_0^R(z) \subset P_1^R(z) \subset \ldots \subset P_{k-1}^R(z) \subset P_k^R(z) \subset \ldots$$
and we can define restriction functors ${\rm res}^k_{k-1}$ from the category of $P_k^R(z)$-modules to the category of $P_{k-1}^R(z)$-modules.

There is an anti-isomorphism  
$\ast$ on $\algebra{k}{R}(z)$ given by flipping a partition diagram through its horizontal axis.

\bigskip

The representation theory of the partition algebra can be described in terms of a directed graph, called the branching graph,  with vertices given by partitions (to be distinguished from the set-partitions considered earlier).
 
 Let $l$ denote a non-negative integer. A {\sf partition} of $l$, denoted $\lambda \vdash l$, is a weakly decreasing sequence $\lambda=(\lambda_1,\lambda_2,\dots)$ of non-negative integers such that $\sum_{i\ge 1}\lambda_i=l$. We denote by $\varnothing $ the unique partition of $0$.  If $\lambda$ is a partition, we will also write $|\lambda|=\sum_{i\ge 1}\lambda_i$. 
 With a partition, $\lambda$, is associated its {\sf Young diagram}, which is the set of nodes
\[[\lambda]=\left\{(i,j)\in\mathbb{Z}_{>0}^2\ \left|\ j\leq \lambda_i\right.\right\}.\]
  Given a node $a\in [\lambda]$ specified by $i,j\geq1$, we say the node has {\sf content}  $j-i$ and write $c(a)=j-i$.  
The diagram $[\lambda]$ is frequently represented as an array of boxes with $\lambda_i$ boxes on the $i$-th row. For example, if $\lambda=(3,2)$, then $$[\lambda]=\text{\tiny\Yvcentermath1$\yng(3,2)$}\;.$$   We will identify the partition $\lambda$ with its Young diagram and write $\lambda$ in place of $[\lambda]$. We write $\lambda \subseteq \mu$ when $[\lambda]\subseteq [\mu]$.
 Let $\lambda$ be  a partition. A node $(i,j)$ is an {\sf addable} node of $\lambda$ if $(i,j)\not\in\lambda$ and $\mu=\lambda \cup{\lbrace}(i,j){\rbrace}$ is a partition. We also refer to $(i,j)$  as a {\sf removable} node of $\mu$. We let $A(\lambda)$ and $R(\lambda)$ respectively denote the set of addable nodes and removable nodes of $\lambda$.

The {\sf branching graph}, $\mathcal{Y}$, for the partition algebra is defined as follows.  We take the  
vertex set $\cup_{k\geq 0} \mathcal{Y}_k$ where
$$\mathcal{Y}_k = \{(\lambda, k) \mid  \lambda \vdash l\leq \lfloor k/2 \rfloor \}. $$
We call $\mathcal{Y}_k$ the set of vertices on level $k$.

For $(\mu,k-1)\in \mathcal{Y}_{k-1}$ and $(\lambda,k)\in \mathcal{Y}_k$ we have an edge $(\mu,k-1)\rightarrow (\lambda,k)$ if and only if either $\lambda = \mu$, or $k$ is even and $\lambda = \mu \cup \{a\}$ for some $a\in A(\mu)$, or $k$ is odd and $\lambda  = \mu \setminus \{a\}$ for some $a\in R(\mu)$.

   The first few levels
of $\mathcal{Y}$    are given in \hyperref[brancher]{Figure~\ref*{brancher}} where, to simplify the picture, we have displayed the levels on the left hand side and  the corresponding partitions on each level.
\begin{figure}[ht!]
$$  \scalefont{0.8}
\begin{tikzpicture}[scale=0.5]
          \begin{scope}    \draw (0,3) node { $  \varnothing $  };   
  \draw (-3,0) node {   $  1$  };     \draw (-3,3) node {   $ 0$  };   
    \draw (-3,-3) node {   $ 2$  };     \draw (-3,-6) node {   $ 3$  };   
    \draw (-3,-9) node {   $ 4$  };     
              \draw (0,0) node {   $ \varnothing $  };   
    \draw (0,-3) node   {   \text{	$ \varnothing  $	}}		;    \draw (+3,-3) node   {  
     $ (1) $	
    }		;
      \draw (0,-6) node   {   \text{	$ \varnothing$	}};
     \draw (3,-6) node   {  $ (1) $	 };
    \draw [<-] (0.0,1) -- (0,2);     \draw [->] (0.0,-1) -- (0,-2);        
                \draw [->] (1,-1) -- (2,-2);    \draw [->] (2,-4) -- (1,-5);       \draw [->] (0,-4) -- (0,-5);   \draw [->] (3,-4) -- (3,-5);
  \draw [->] (0,-7) -- (0,-8);   \draw [->] (03,-7) -- (3,-8); 
    \draw [->] (1,-7) -- (2,-8);   \draw [->]  (5,-6.5) -- (8,-8);   \draw [->] (4,-7) -- (5,-8); 
     \draw (+0,-9) node   {   	  $ \varnothing $  };                 
          \draw (+3,-9) node   {  $ {(1)} $	 }		;
             \draw (+6,-9) node              {    $  {(2)}  $	 	 }		;
                          \draw (+9,-9) node            {    $ {(1^2)} $	 	 }			;
    \end{scope}\end{tikzpicture}
    $$
\caption{The branching graph of the partition algebra up to level 4.}
\label{brancher}
\end{figure}
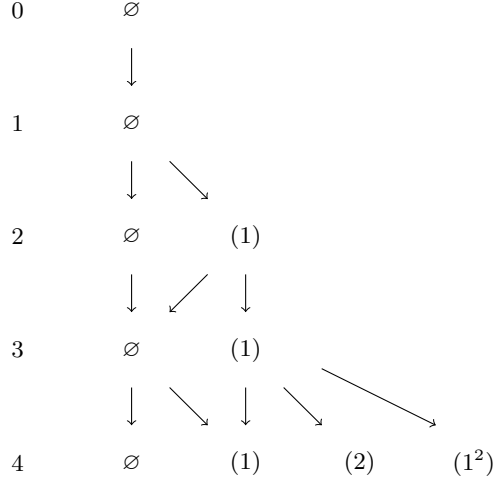

\begin{definition}\label{ordering}
For $k\in {\mathbb{Z}_{\geq 0}}$, we  denote by $\Std_k$ the set of all paths on the branching graph $\mathcal{Y}$ starting at $(\varnothing , 0)$ and ending at a vertex on level $k$. For $(\lambda,k)\in \mathcal{Y}_k$ we denote by $\Std_k(\lambda)$ the subset of $\Std_k$ consisting of all paths ending at $(\lambda,k)$. If $\stt\in \Std_k(\lambda)$ we write it as a sequence of vertices
 $$
\stt = (\stt(0),\stt(1)  ,   \stt(2), \dots, \stt(k-1),\stt(k) )
 $$
where $\stt (0) = (\varnothing ,0)$ and $\stt (k) = (\lambda,k)$. To simplify notation we will sometimes identify $\stt(k) = (\lambda,k)$ with $\lambda$.

For $0 \leq r \leq k$ and $\stt \in \Std_k$  we define  $\stt {\downarrow}_r  \in \Std_r$, the truncation of $\stt $ to level $r$,  given by
$$\stt {\downarrow} _r= (\stt (0), \stt (1), \ldots , \stt (r)).$$
\end{definition}

\begin{definition}
If $\lambda$ and $\mu$ are partitions, we write $\lambda \trianglerighteq \mu$ if either $|\lambda|<|\mu|$, or $|\lambda|=|\mu|$ and 
$$\sum_{i=1}^r \lambda_i \geq \sum_{i=1}^r \mu_i \qquad \mbox{for all $r\geq 1$}.$$

 We write $\lambda \rhd \mu$ if   $\lambda \trianglerighteq  \mu$  and  $\lambda \neq  \mu$. 
For $\pointla , \pointmu \in \mathcal{Y}_k$, we write $\pointla  \vartriangleright \pointmu$ whenever $\lambda \vartriangleright \mu$.

We extend this to a reverse lexicographic ordering on the paths in $\Std_k(\lambda)$ as follows.  Given $\sts , \stt \in \Std_k(\lambda)$, we write $s \succ t$ if there exists $0 \leq  r < k$ such that $\sts (i)=\stt (i)$ for all $r <  i\leq  k$ and $\sts (r)\rhd \stt (r)$.
We write $\sts \succeq \stt $ if we have either $\sts \succ \stt $ or $\sts =\stt $.

\end{definition}

Working inductively along the edges of the branching graph, Goodman and Enyang  constructed  in \cite[Section~6]{EG:2012} an element $m_{\sts\stt}\in P_{k}^{R}(z)$ 
 associated to any pair of paths  $\sts, \stt \in \Std_k(\lambda)$. They showed that the set of all such elements form a cellular basis for $P_k^R(z)$. More precisely we have the following result.

 \begin{theorem}\cite[Proposition 6.26 and Theorem 6.30]{EG:2012}
  The algebra   $\algebra{k}{R}(z)$  is free as an $R$-module with basis  $$\left\{
m_{\sts\stt} \mid 
 \sts, \stt \in \Std_k(   \lambda),  
\obpointla\in\mathcal{Y}_k
\right\} .$$ 
Moreover, if $\sts,\stt \in \Std_k(\lambda)$, for some
      $\obpointla \in\mathcal{Y}_k$, and $a\in  \algebra{k}{R}(z) $ then 
    there exist scalars $r_{\stt\stu}(a)\in R$, which do not depend on
    $\sts$, such that 
\begin{equation}\label{constants}
 m _{\sts\stt} a =\sum_{\stu\in
      \Std_k(\lambda)}r_{\stt\stu}(a)m_{\sts\stu}\pmod 
      { \subalgebra (z)	},
      \end{equation}
      where $\subalgebra(z)$ is the $R$-submodule of $\algebra{k}{R}(z)$ spanned by
      \[\{m_{\sf q r}\mid\mu \vartriangleright  \lambda\text{ and }{\sf q , r}\in \Std_k(\mu )\}.\]
Finally,  we have that 
      $(m_{\sts\stt})^*=m_{\stt\sts}$, for all $(\lambda,k)\in\mathcal{Y}_k$ and
      all $\sts,\stt\in\Std_k(\lambda)$.
 Therefore the algebra $P_k^R(z)$  is cellular, in the sense of \cite{MR1376244}.  
 \end{theorem}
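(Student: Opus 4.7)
The plan is to proceed by induction on $k$, constructing the Murphy-type basis $\{m_{\sts\stt}\}$ level-by-level along the edges of the branching graph $\mathcal{Y}$. The base case $k=0$ is trivial: $P_0^R(z) = R$ has a unique empty path and basis $\{1\}$.

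For the inductive step, I would assume a cellular basis $\{m_{\stu\stv}\}$ of $P_{k-1}^R(z)$ indexed by pairs of paths in $\Std_{k-1}$ has been constructed. Each $\stt \in \Std_k(\lambda)$ can be written as an extension $\stt = (\stt', (\lambda,k))$ of a path $\stt' \in \Std_{k-1}(\mu)$ along an edge $(\mu,k-1) \to (\lambda,k)$ in $\mathcal{Y}$. I would attach to each such edge a distinguished element of $P_k^R(z)$ that implements the branching transition: a propagating diagram when $\mu = \lambda$, a Jones-basic-construction type cup/cap combined with a symmetric group element when $k$ is even and a box is added, and symmetrically when $k$ is odd and a box is removed. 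The element $m_{\sts\stt}$ is then defined by sandwiching $m_{\sts'\stt'}$ (from level $k-1$) between the edge elements along $\sts$ read via $*$ and the edge elements along $\stt$.

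The cellular axioms are verified in three stages. Spanning and linear independence follow from a dimension count: one shows that $\sum_{(\lambda,k) \in \mathcal{Y}_k} |\Std_k(\lambda)|^2$ equals the Bell number $B(2k) = \dim_R P_k^R(z)$, which can be checked after extending scalars to $\mathbb{F}$, where generic semisimplicity of $P_k^{\mathbb{F}}(z)$ gives the count directly. The involution property $m_{\sts\stt}^* = m_{\stt\sts}$ is built into the construction since the edge elements appear symmetrically on the $\sts$- and $\stt$-sides and $*$ reverses diagram composition while fixing the relevant seed idempotents.

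The main obstacle will be verifying the cellular multiplication rule \eqref{constants}: for every $a \in P_k^R(z)$,
\[
m_{\sts\stt}\, a \equiv \sum_{\stu \in \Std_k(\lambda)} r_{\stt\stu}(a)\, m_{\sts\stu} \pmod{\subalgebra(z)},
\]
with scalars $r_{\stt\stu}(a)$ independent of $\sts$. By bilinearity it suffices to let $a$ range over a set of generators of $P_k^R(z)$, for instance the simple transpositions, the cup/cap diagrams, and the distinguished contraction diagrams. The action of such a generator interacts only with the final edge of $\stt$, and splits into a principal contribution---in which the endpoint $(\lambda,k)$ is preserved and $\stt$ is permuted among paths ending at the same vertex, with coefficients determined by the local seminormal frame and therefore $\sts$-independent---and correction terms in which diagram composition collapses propagating lines, so that the resulting basis labels are necessarily partitions strictly dominating $(\lambda,k)$ in the order on $\mathcal{Y}_k$, placing them inside $\subalgebra(z)$. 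The delicate combinatorial point is the dominance check: one has to track, at each edge type, how propagating blocks can merge or absorb under multiplication and confirm that every such collapse strictly increases the dominance label, which is the technical heart of the Goodman--Enyang inductive gluing procedure.
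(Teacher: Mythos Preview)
The paper does not actually prove this theorem: it is quoted verbatim from \cite[Proposition~6.26 and Theorem~6.30]{EG:2012}, and the only indication of method is the sentence preceding the statement, namely that Goodman and Enyang ``working inductively along the edges of the branching graph'' constructed the elements $m_{\sts\stt}$. Your sketch is therefore not competing against a proof in this paper but against the cited source, and at the level of strategy---induction on $k$, attaching branching elements to edges of $\mathcal{Y}$, and checking the cellular axioms generator-by-generator---your outline is consistent with what the paper attributes to \cite{EG:2012}.

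One genuine gap in your sketch: the sentence ``spanning and linear independence follow from a dimension count'' is not a valid argument over $R=\mathbb{Z}[z]$. Knowing that $\sum_{(\lambda,k)}|\Std_k(\lambda)|^2$ equals the rank of $P_k^R(z)$ tells you only that the proposed set has the right cardinality; it does not by itself establish that the set is an $R$-basis. Passing to $\mathbb{F}$ and invoking generic semisimplicity can give linear independence over $\mathbb{F}$, but you still need spanning over $R$, and for that a count is useless. What actually works in the Goodman--Enyang framework is a unitriangularity statement: the transition matrix between the diagram basis and the $\{m_{\sts\stt}\}$ is unitriangular with respect to a suitable refinement of the dominance order, and this is what simultaneously yields spanning and $R$-linear independence. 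You gesture at triangularity later when discussing the multiplication rule, but it needs to be front and center in the freeness argument, not replaced by a dimension count.
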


\begin{definition}\label{cellmodule}
Given  $\obpointla \in\mathcal{Y}_k$, the {\sf cell module} 
$\standard{k,R}{z}(\lambda)$ 
is the right $\algebra{k}{R}(z)$-module  with   linear basis $\big\lbrace m_\stt \mid\stt \in\Std_k(\lambda)\big\rbrace$ and right $\algebra{k}{R}(z)$-action
\begin{align*}
m_\stt a = \sum_{\stu \in \Std_k(\lambda) }r_{\stt \stu}(a) m_\stu , \quad\text{for $a\in  \algebra{k}{R}(z)$,}
\end{align*}
where the sum is over all $\stu \in\Std_k(\lambda)$ and the  $r_{\stt\stu}(a) \in R$ are determined by \cref{constants}. 
\end{definition}

These cellular basis are compatible with the restriction from $P_k^R(z)$ to $P_{k-1}^R(z)$. More precisely we have the following result.

\begin{proposition}[{\cite[Lemma~3.12]{EG:2012}}]\label{filtration}
Let $\obpointla \in\mathcal{Y}_k$ and assume that  $(\rho,k-1)\to\pointla $ is an edge in  $\mathcal{Y}$. Define $N^{\rhd \rho} \subseteq N^{\unrhd \rho}\subseteq \standard{k,R}{z}(\lambda)$  by 
\begin{eqnarray*}
&&N^{\unrhd \rho}= R {\rm -span} \{ m_\stt \in \Delta_{k,R}^z( \lambda) \mid  \stt  ({k-1})\unrhd(\rho,k-1)\} 
\quad\text{and}\\
&&N^{\rhd \rho}= R {\rm -span}\{ m_\stt \in\Delta_{k,R}^z( \lambda)  \mid  \stt  ({k-1})\rhd(\rho,k-1)\}.
\end{eqnarray*}
Then $N^{\rhd \rho}$ and $N^{\unrhd \rho}$ are $P_{k-1}^R(z)$-submodules of ${\rm res}^k_{k-1} \Delta_{k,R}^z(\lambda)$ and the linear map \newline $N^{\unrhd \rho}/N^{\rhd \rho} \to \standard{k-1,R}{z}(\rho)$ given by 
\begin{align*}
m_\stt +N^{\rhd \rho}  \mapsto  m_{\stt {\downarrow}_{k-1}}, \quad\text{for $\stt\in \Std_k(\lambda)$ with $\stt (k-1) = (\rho, k-1)$}
\end{align*}
is an isomorphism of $\algebra{k-1}{R}(z)$-modules.
\end{proposition}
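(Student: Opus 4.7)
The plan is to combine the cellular multiplication formula (\ref{constants}) with the inductive construction of the Goodman--Enyang basis along the branching graph $\mathcal{Y}$. The key property I will exploit is that for $\stt \in \Std_k(\lambda)$, the basis element $m_\stt$ is built inductively from $m_{\stt\downarrow_{k-1}}$ via a specific lift associated to the edge $\stt(k-1) \to \stt(k)$, so the action of the subalgebra $P_{k-1}^R(z)$ on $m_\stt$ should be controlled by the cellular action on $m_{\stt\downarrow_{k-1}}$ inside $\Delta_{k-1,R}^z(\stt(k-1))$.

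First I would establish that $N^{\unrhd \rho}$ and $N^{\rhd \rho}$ are $P_{k-1}^R(z)$-submodules. Applying (\ref{constants}) to $a \in P_{k-1}^R(z) \subseteq P_k^R(z)$ yields
$$m_\stt \cdot a \;=\; \sum_{\stu \in \Std_k(\lambda)} r_{\stt\stu}(a) \, m_\stu ,$$
and the inductive construction in \cite[Section~6]{EG:2012} forces $r_{\stt\stu}(a)$ to vanish unless $\stu(k-1) \unrhd \stt(k-1)$. Both submodule statements follow immediately.

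Next I would introduce the candidate isomorphism
$$\phi : N^{\unrhd \rho}/N^{\rhd \rho} \longrightarrow \Delta_{k-1,R}^z(\rho), \qquad m_\stt + N^{\rhd \rho} \longmapsto m_{\stt\downarrow_{k-1}}.$$
Since the edge $(\rho, k-1) \to (\lambda, k)$ is fixed, truncation $\stt \mapsto \stt\downarrow_{k-1}$ gives a bijection between $\{\stt \in \Std_k(\lambda) : \stt(k-1) = (\rho, k-1)\}$ and $\Std_{k-1}(\rho)$, which carries the natural $R$-basis of the quotient $N^{\unrhd \rho}/N^{\rhd \rho}$ onto the cellular basis of $\Delta_{k-1,R}^z(\rho)$. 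Hence $\phi$ is an $R$-module isomorphism.

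Finally, to verify $P_{k-1}^R(z)$-equivariance, I would observe that for $a \in P_{k-1}^R(z)$ only the terms with $\stu(k-1) = (\rho, k-1)$ survive modulo $N^{\rhd \rho}$, and for these the scalar $r_{\stt\stu}(a)$ agrees with the structure constant governing the action of $a$ on $m_{\stt\downarrow_{k-1}}$ in $\Delta_{k-1,R}^z(\rho)$. The main obstacle, which is not a formal consequence of cellularity at level $k$, is justifying these two compatibilities of structure constants between adjacent levels; both rest on the explicit inductive construction of the basis in \cite[Section~6]{EG:2012}, which may be invoked directly.
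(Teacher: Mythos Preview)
The paper does not give its own proof of this proposition: it is stated with a citation to \cite[Lemma~3.12]{EG:2012} and used as a black box thereafter. So there is nothing to compare your argument against in the present paper.

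That said, your outline is the correct shape for the argument and correctly locates where the real work sits. The cellular formula \eqref{constants} at level $k$ alone does not give the triangularity with respect to $\stt(k-1)$, nor the matching of structure constants with level $k-1$; both are genuine features of the Goodman--Enyang construction, and you rightly flag that they must be imported from \cite[Section~6]{EG:2012} rather than derived formally. If you want a self-contained proof, you would need to unpack the branching elements used there (the elements that carry $m_{\stt\downarrow_{k-1}}$ to $m_\stt$) and verify directly that they commute appropriately with $P_{k-1}^R(z)$ modulo higher terms; otherwise, citing \cite{EG:2012} as you do is exactly what the paper itself does.
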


The cellular structure on $P_k^R(z)$ gives a bilinear form on the cell modules defined as follows.

\begin{definition}\label{bilinearform}
 Let $\obpointla \in\mathcal{Y}_k$ and $\sts,\stt\in\Std_k(\lambda)$. \\  We define a  map $\langle \, , \, \rangle :\standard{k,R}{z}(\lambda)\times\standard{k,R}{z}(\lambda)\to R$ by 
\begin{align*}
m_{\sts \stu}  m_{\stv\stt} = 
\langle m_\stu , m_\stv  \rangle
m_{\sts\stt}   
 \mod \algebra{\vartriangleright(\lambda,k)}{R}(z),\qquad\text{for $\stu ,\stv \in\Std_k(\lambda)$.}
\end{align*}
Then $\langle \, , \, \rangle$ is a symmetric bilinear form on $\standard{k,R}{z}(\lambda)$ which is independent of the choice of $\sts,\stt \in \Std_k(\lambda)$.  
\end{definition}

\section{Jucys--Murphy elements and seminormal basis}

In \cite[Section~3]{MR2143201} a family of so-called {\sf Jucys--Murphy}  elements $L_1,\ldots,L_k$  for $\algebra{k}{R}(z)$ is  defined diagrammatically.   A recursion for these Jucys--Murphy elements, analogous to the recursion for the  Jucys--Murphy elements in the group algebra of the symmetric group, is given in~\cite[Section~3]{MR3035512} or ~\cite[Proposition~2.6]{MR3092697}. We will not need the explicit definition of these elements here but will recall their properties.

\begin{proposition}[\mbox{See~\cite[Proposition~3.15]{MR3092697}}]\label{u-t-c}
Assume that $\obpointla \in\mathcal{Y}_k$. If $\stt \in\Std_k(\lambda)$ and $1\le i\le k$, then 
\begin{align*}
m_\stt L_i=c_\stt (i)m_\stt  +\sum_{\sts \succ \stt }r_\sts m_\sts 
\end{align*}
for scalars $r_\sts , c_\stt (i) \in R$. Moreover, if we write $\stt = (\stt(0), \stt(1), \stt(2), \ldots , \stt(k))$, the coefficients $c_{\stt}(i)\in R$ are given as follows.
 If $i$ is even, we have  
\begin{align*}
c_\stt (i)=
\begin{cases}
z-|\stt(i)|,&\text{if $\stt(i)=\stt{(i-1)}$,}\\
c(a),&\text{if $\stt{(i)}=\stt{(i-1)}\cup\lbrace a\rbrace$,}
\end{cases}
\end{align*}
and, if $i$ is odd, we have
\begin{align*}
c_\stt (i)=
\begin{cases}
|\stt(i)|, &\text{if $\stt{(i)}=\stt{(i-1)}$,}\\
z-c(a),&\text{if $\stt{(i)}=\stt{(i-1)}\setminus \lbrace a\rbrace$.}
\end{cases}
\end{align*} 
\end{proposition}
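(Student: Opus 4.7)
The plan is to proceed by induction on $k$, with the base case $k=1$ being immediate (there are few paths, and $L_1$ acts as a scalar by direct inspection). For the inductive step, fix $(\lambda,k)\in\mathcal{Y}_k$ and $\stt\in\Std_k(\lambda)$, and write $\rho$ for the partition with $\stt(k-1)=(\rho,k-1)$. I would split the analysis into the two ranges $i\le k-1$ and $i=k$, which behave quite differently.

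For $i\le k-1$, the element $L_i$ lies in $P_{k-1}^R(z)$, so the action of $L_i$ on $m_\stt$ is controlled by \cref{filtration}. That proposition shows $m_\stt\in N^{\unrhd\rho}$ and that $m_\stt+N^{\rhd\rho}$ corresponds to $m_{\stt\downarrow_{k-1}}\in\standard{k-1,R}{z}(\rho)$ under an isomorphism of $P_{k-1}^R(z)$-modules. Applying the inductive hypothesis in $\standard{k-1,R}{z}(\rho)$ gives
\[
m_{\stt\downarrow_{k-1}}L_i = c_{\stt\downarrow_{k-1}}(i)\,m_{\stt\downarrow_{k-1}}+\sum_{\sts'\succ \stt\downarrow_{k-1}} r_{\sts'}\,m_{\sts'}.
\]
Lifting back to $\standard{k,R}{z}(\lambda)$, every path $\sts'\in\Std_{k-1}(\rho)$ appearing here extends (by appending $\stt(k)=\lambda$) to a path $\sts\in\Std_k(\lambda)$ with $\sts\succ\stt$, and any correction coming from $N^{\rhd\rho}$ is a linear combination of $m_\sts$ with $\sts(k-1)\rhd\rho$, hence also $\sts\succ\stt$. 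Finally, $c_{\stt\downarrow_{k-1}}(i)=c_\stt(i)$ since the content formula depends only on $\stt(i-1)$ and $\stt(i)$, both of which are visible at level $k-1$.

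The essential case is $i=k$. Here I would combine two inputs: the recursive formula for $L_k$ in terms of $L_{k-1}$ and the generators of $P_k^R(z)\setminus P_{k-1}^R(z)$ given in \cite[Proposition~2.6]{MR3092697}, with the inductive construction of the Goodman--Enyang basis in \cite[Section~6]{EG:2012}, where $m_\stt$ is built from $m_{\stt\downarrow_{k-1}}$ by multiplication by a carefully chosen ``edge lift'' depending on which of the four edge types $(\rho,k-1)\to(\lambda,k)$ is traversed (namely: $k$ even with $\lambda=\rho$ or $\lambda=\rho\cup\{a\}$, and $k$ odd with $\lambda=\rho$ or $\lambda=\rho\setminus\{a\}$). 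Direct computation in each of the four cases, using that $L_{k-1}$ acts on $m_{\stt\downarrow_{k-1}}$ by the inductively known eigenvalue plus strictly higher $\succ$-terms, shows that the ``leading'' contribution of $L_k$ reproduces the parameter $z-|\stt(k)|$, $c(a)$, $|\stt(k)|$ or $z-c(a)$ respectively, matching the statement. All remaining contributions either involve paths $\sts\succ\stt$ in $\Std_k(\lambda)$ or land in $\algebra{\vartriangleright(\lambda,k)}{R}(z)$, and thus vanish modulo the cellular filtration.

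The main obstacle is the $i=k$ case: one must verify that the edge-lift elements used by Goodman--Enyang are precisely the ones that diagonalise $L_k$ up to the $\succ$-triangular correction, and that the four candidate eigenvalues come out of the recursion for $L_k$. This is a somewhat delicate bookkeeping exercise, but it is a direct calculation once the recursion and the construction of $m_\stt$ along the last edge are written down, and it is carried out explicitly in \cite[Proposition~3.15]{MR3092697}.
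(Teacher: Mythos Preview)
The paper does not give its own proof of this proposition; it is simply quoted from \cite[Proposition~3.15]{MR3092697}. Your sketch is a faithful outline of the argument given there: induction on $k$, handling $i\le k-1$ via the cell-filtration compatibility of \cref{filtration}, and treating $i=k$ by combining the recursion for $L_k$ with the edge-by-edge construction of $m_\stt$ from \cite[Section~6]{EG:2012}. There is nothing to compare, since the paper defers entirely to the reference you already cite.
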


\noindent Therefore the elements 
$L_1, \ldots, L_k$ form a family of Jucys--Murphy elements as in  \cite{MR2414949}.

\begin{definition}
Given $\stt \in\Std_k(\lambda)$, we let  $c(\stt)$ denote the {\sf content vector} $(c_\stt(1),c_\stt(2), \ldots, c_\stt(k))$.
\end{definition}

\begin{example}\label{exampleofcontent}
For   $k=6$, we  let $\stt \in \Std_6(\varnothing)$ and $\sts \in \Std_6((3))$ denote the   paths 
$$
 (\varnothing,\varnothing, (1),(1),(1), \varnothing,\varnothing)
\quad
 (\varnothing,\varnothing, (1),(1),(2), (2), (3))
$$ respectively.  These paths 
have  content vectors  $c(\stt)$ and $c(\sts)$ given by 
$$  
(0,0,1,z-1,z,z) \quad 
 (0,0,1,1,2,2)$$ 
 respectively.  
 \end{example}

A straighforward induction on $k$ shows that this family of Jucys--Murphy elements satisfies the separation condition (over $R$) given in  \cite[Definition 2.8]{MR2414949}, which in essence says that the content vectors distinguish between the elements of ${\rm Std}_k$. This allows us to develop further the representation theory of the partition algebra over the field of fractions $\mathbb{F}=\mathbb{Q}(z)$.

We  write 
\begin{align*}
\algebra{k}{\FF}(z)=\algebra{k}{R}(z)\otimes_R \mathbb{F}.
\end{align*}
To simplify notation, we will freely write $L_i,m_ {\stt},$ and so on, in place of $L_i\otimes 1_\mathbb{F}$,  $m_ {\stt}\otimes 1_\mathbb{F}$, and so on.    If $\obpointla \in\mathcal{Y}_k$, we  define 
\begin{align*}
  \standard{k,\mathbb{F}}{z}(\lambda)= \standard{k,R}{z}(\lambda)\otimes_R \mathbb{F}.
\end{align*}
The algebra $P_k^{\mathbb{F}}(z)$ is semisimple and the set $\{  \standard{k,\mathbb{F}}{z}(\lambda) \,\, : \,\, (\lambda ,k)\in \mathcal{Y}_k\}$ form a complete set of isomorphism classes of simple $P_k^{\mathbb{F}}(z)$-modules.

Following  \cite{MR2414949}, Enyang constructed in \cite{MR3092697} seminormal bases for the partition algebra over the field $\mathbb{F}$ which diagonalise the action of the Jucys--Murphy elements and are orthogonal with respect to the bilinear form $\langle \, , \, \rangle$. We recall the construction of these bases and some of their properties.
\begin{definition}\label{basis:1}
Let $\obpointla \in\mathcal{Y}_k$ and   $\stt\in\Std_k(\lambda)$. Define
\begin{align*}
F_\stt =
\prod_{
1\le i\le k
}
\prod_{
\substack{
\stu \in    \Std_k(\rho) 		\\
c_\stu (i)\ne c_\stt (i)}}
\frac{L_i-c_\stu (i)}{c_\stt (i)-c_\stu (i)},
\end{align*}
where the product is taken over all $(\rho,k)\in\mathcal{Y}_k$. We also define $$f_\stt =m_\stt F_\stt.$$
\end{definition}

\begin{proposition}[\mbox{See~\cite[Proposition~4.2]{MR3092697}}]\label{s-n-d}
Let $k\in \ZZ_{\geq 0}$ and $\obpointla \in\mathcal{Y}_{k}$. 
\begin{enumerate}[label=(\arabic{*}), ref=\arabic{*},leftmargin=0pt,itemindent=1.5em]
\item\label{s-n-d:1} If $\stt \in\Std_k(\lambda)$, then we have that
\begin{align*}
f_\stt =m_\stt 
+\sum_{\begin{subarray}c
\sts \in\Std_k(\lambda) \\
{\sts \succ\stt }
\end{subarray}
} r_\sts m_\sts,
\end{align*}
for scalars  $r_\sts \in \mathbb{F}$.
\item\label{s-n-d:2} The set  ${\lbrace}f_\stt \mid \stt \in\Std_k(\lambda){\rbrace}$ is an $\mathbb{F}$-basis for $\standard{k,\mathbb{F}}z(\lambda)$. 
\item\label{s-n-d:4} We have $f_\stt L_i=c_\stt (i)f_\stt $ for all $\stt \in\Std_k(\lambda)$ and $i=1,\ldots,k$. 
\item\label{s-n-d:5} We have  $F_\sts F_\stt  =\delta_{\sts\stt}F_\sts $ and $f_\sts F_{\stt }= \delta_{\sts \stt }f_\sts $ for all $\sts ,\stt \in\Std_k(\lambda)$.
\item\label{s-n-d:6} We have  $\langle f_\sts ,f_\stt \rangle=\delta_{\sts \stt}  \langle f_\sts ,f_\sts \rangle$ for all $\sts ,\stt \in\Std_k(\lambda)$.
\end{enumerate}

\end{proposition}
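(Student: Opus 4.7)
The plan is to follow the general framework of Mathas~\cite{MR2414949} for cellular algebras equipped with JM-type elements satisfying the separation condition, adapted to the partition algebra setting over $\mathbb{F}$. As signalled in the paper (immediately after Example~2.3), the essential prerequisite is the separation condition: for any two distinct $\sts,\stt\in\Std_k$ there exists $1\le i\le k$ with $c_\sts(i)\ne c_\stt(i)$. I would verify this by induction on $k$; if $\sts{\downarrow}_{k-1}\ne\stt{\downarrow}_{k-1}$ one invokes the inductive hypothesis, and otherwise a direct case analysis using the content formulas of Proposition~2.1 shows $c_\sts(k)\ne c_\stt(k)$ by inspecting whether the step at level $k$ adds, removes, or repeats a vertex.

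To establish (1) and (2) together, observe that by Proposition~2.1 each Lagrange-type factor $(L_i-c_\stu(i))/(c_\stt(i)-c_\stu(i))$ acts on $m_\stt$ in the cell module as the identity plus an $\mathbb{F}$-combination of $m_\sts$ with $\sts\succ\stt$. Multiplying all factors of $F_\stt$ yields (1), and since the transition matrix between $\{m_\stt\}$ and $\{f_\stt\}$ is upper unitriangular with respect to $\succ$, (2) is immediate.

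For (3), I would show that $F_\stt(L_j-c_\stt(j))$ annihilates every cell module. By Proposition~2.1, $L_j$ acts on $\standard{k,\mathbb{F}}{z}(\lambda)$ as an upper triangular operator with eigenvalues in $C_\lambda=\{c_\stu(j)\mid\stu\in\Std_k(\lambda)\}$, so $\prod_{c\in C_\lambda}(L_j-c)$ kills the cell module. Viewed as a polynomial in $L_j$, the element $F_\stt(L_j-c_\stt(j))$ contains $(L_j-c_\stu(j))$ for every $\stu\in\Std_k$ with $c_\stu(j)\ne c_\stt(j)$, together with the extra factor $(L_j-c_\stt(j))$, and is therefore divisible by $\prod_{c\in C_\lambda}(L_j-c)$ for every $\lambda$; hence $f_\stt L_j=c_\stt(j)f_\stt$. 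With (3) in hand, (4) reduces to an eigenvalue computation: the factor $(L_i-c_\stu(i))/(c_\stt(i)-c_\stu(i))$ acts on $f_\sts$ as $(c_\sts(i)-c_\stu(i))/(c_\stt(i)-c_\stu(i))$. When $\sts=\stt$ every such scalar equals $1$, and when $\sts\ne\stt$ the separation condition supplies $i$ with $c_\sts(i)\ne c_\stt(i)$, so taking $\stu=\sts$ in the product for $F_\stt$ contributes a zero factor; hence $f_\sts F_\stt=\delta_{\sts\stt}f_\sts$. The idempotent identity $F_\sts F_\stt=\delta_{\sts\stt}F_\sts$ then follows by expanding each $F_\stt$ in its action on the basis $\{f_\sts\}$ using (1).

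Finally, (5) reduces to the self-adjointness $L_i^*=L_i$, which is a direct diagrammatic check from the definition in~\cite{MR2143201}. Self-adjointness and commutativity of the $L_i$ give $F_\stt^*=F_\stt$, hence
\[
\langle f_\sts,f_\stt\rangle
=\langle m_\sts F_\sts,m_\stt F_\stt\rangle
=\langle m_\sts,m_\stt F_\stt F_\sts\rangle
=\delta_{\sts\stt}\langle m_\sts,m_\sts F_\sts\rangle
=\delta_{\sts\stt}\langle f_\sts,f_\sts\rangle,
\]
using $F_\stt F_\sts=\delta_{\sts\stt}F_\sts$ and $F_\sts^2=F_\sts$. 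The main obstacle I anticipate is the polynomial bookkeeping in (3): confirming that $F_\stt(L_j-c_\stt(j))$ really contains all distinct content values $c_\stu(j)$ as roots, uniformly in $\lambda$, so that it annihilates every cell module simultaneously. Once the separation condition and (3) are secured, the remaining properties follow essentially formally from Mathas's machinery.
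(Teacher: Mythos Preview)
The paper does not give its own proof of this proposition; it is quoted directly from \cite[Proposition~4.2]{MR3092697}, which in turn follows the Mathas framework of \cite{MR2414949}. Your outline is therefore aligned with the intended approach, and parts~(1), (2), (4), (5) are handled correctly.

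There is, however, a real gap in your argument for~(3), and it sits exactly where you anticipate trouble --- though the issue is not quite the one you name. You assert that $\prod_{c\in C_\lambda}(L_j-c)$, a product over the \emph{distinct} values $c$, annihilates $\standard{k,\mathbb{F}}{z}(\lambda)$. Upper-triangularity of $L_j$ in the Murphy basis only yields, via Cayley--Hamilton, that the characteristic polynomial $\prod_{\stu\in\Std_k(\lambda)}(L_j-c_\stu(j))$ (with multiplicities) annihilates. Your stronger claim amounts to $L_j$ being diagonalisable on the cell module, which is essentially what you are trying to prove. The divisibility you invoke does not rescue this: if some $\stu\in\Std_k(\lambda)$ with $\stu\neq\stt$ has $c_\stu(j)=c_\stt(j)$ --- and this certainly occurs --- then the $L_j$-part of $F_\stt(L_j-c_\stt(j))$ carries only a single factor $(L_j-c_\stt(j))$, while the characteristic polynomial demands at least two.

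The repair is to use all of the $L_i$ simultaneously rather than isolating $L_j$. The $L_i$ commute and act upper-triangularly on $\standard{k,\mathbb{F}}{z}(\lambda)$ with diagonal characters $\stt\mapsto(c_\stt(1),\dots,c_\stt(k))$; by the separation condition these characters are pairwise distinct, so each joint generalized eigenspace is one-dimensional. A one-dimensional invariant subspace for a commuting family is automatically a genuine simultaneous eigenspace, hence the $L_i$ are simultaneously diagonalisable on every cell module. With this in hand your minimal-polynomial claim is justified, and the remainder of your argument for (3)--(5) goes through unchanged.
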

 
Enyang described explicitly the action of the generators of the partition algebra on the seminormal basis ${\lbrace}f_\stt \mid \stt \in\Std_k(\lambda){\rbrace}$ for $\standard{k,\mathbb{F}}z(\lambda)$. Enyang also gave an explicit inductive formula for the value of the bilinear form on the seminormal basis elements. Here we only recall what we will need later in the paper.

\begin{proposition}[\mbox{See~\cite[Proposition~4.12]{MR3092697}}]\label{branching}
Let $(\lambda, k)\in \mathcal{Y}_k$ and $\stt \in {\rm Std}_k(\lambda)$ with $\stt (k-1)=\eta$. Write $\sts = \stt{\downarrow}_{k-1}$. Then we have
\begin{align*}
\langle f_\stt ,f_\stt \rangle  =\gamma_{(\eta, k-1) \to(\lambda,k)} \langle f_{\sts} ,f_{\sts} \rangle
\end{align*}
where $\gamma_{(\eta, k-1) \to (\lambda ,k)}$ satisfies the following:
\begin{enumerate}
\item If $k$ is odd and $\lambda = \eta$, or $k$ is even and $\lambda = \eta \cup \{\alpha\}$ then $\gamma_{(\eta, k-1)\rightarrow (\lambda , k)} \in \mathbb{Q}$.
\item If $k$ is odd and $\lambda = \eta \setminus \{\alpha\}$ then
$$\gamma_{(\eta, k-1)\rightarrow (\lambda , k)} = \frac{(z-c(\alpha)-|\lambda| -1)}{(z-c(\alpha) -|\lambda|)}\,  r$$
for some $r\in \mathbb{Q}$.
\item If $k$ is even and $\lambda = \eta$ then
$$\gamma_{(\eta, k-1)\rightarrow (\lambda , k)} = \frac{\prod_{\beta\in A(\lambda)}(z-c(\beta) -|\lambda|)}{\prod_{\beta\in R(\lambda)}(z-c(\beta)-|\lambda|)}\,  r'$$
for some $r'\in \mathbb{Q}$.
\end{enumerate}
\end{proposition}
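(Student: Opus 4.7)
The plan is to derive the three formulas from the explicit recursion for $\langle f_\stt, f_\stt\rangle$ along the branching graph which underlies Enyang's construction in \cite{MR3092697}. By part (5) of \Cref{s-n-d}, the idempotents decompose as $F_\stt = F_{\stt\downarrow_{k-1}} F'_\stt$, where $F'_\stt$ is built only from the alternatives for the last Jucys--Murphy element $L_k$ compatible with $\stt\downarrow_{k-1}$. Consequently $\gamma_{(\eta,k-1)\to(\lambda,k)}$ is a local quantity at the last edge, determined by these last-step alternatives, whose contents are enumerated by \Cref{u-t-c}.

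First I would list, for each parity of $k$, the possible last-step contents $c_\stu(k)$ as $\stu$ ranges over paths in $\Std_k$ extending $\stt\downarrow_{k-1}$. For $k$ even these are $c(\beta)$ with $\beta\in A(\eta)$, together with $z-|\eta|$ (corresponding to $\lambda=\eta$); for $k$ odd they are $z-c(\beta)$ with $\beta\in R(\eta)$, together with $|\eta|$ (corresponding to $\lambda=\eta$). The standard seminormal recursion then expresses $\gamma_{(\eta,k-1)\to(\lambda,k)}$, up to a rational factor coming from the matrix coefficients of the diagrammatic generators acting on $\{f_\stt\}$, as a product of content differences $c_\stt(k)-c_\stu(k)$ over these alternatives, divided by a corresponding product arising from the inverse branching.

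Next I would verify each case by inspecting the $z$-dependence. In case (1), $c_\stt(k)$ is an integer (either $c(\alpha)$ or $|\lambda|$), and every alternative content of the form $z-c(\beta)$ or $z-|\eta|$ is paired in the recursion with a matching factor from the opposite branching direction, so all $z$-factors cancel and $\gamma\in\mathbb{Q}$. In case (2), $c_\stt(k)=z-c(\alpha)$; the sole unpaired $z$-dependent alternative is $c_\stu(k)=|\eta|=|\lambda|+1$, which contributes the numerator $z-c(\alpha)-|\lambda|-1$, while an unpaired denominator factor $z-c(\alpha)-|\lambda|$ arises from the reverse edge, producing the stated ratio. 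In case (3), $c_\stt(k)=z-|\lambda|$; the $z$-dependent alternatives $c(\beta)$ with $\beta\in A(\eta)$ contribute to the numerator and those with $\beta\in R(\eta)$ to the denominator, yielding the displayed product.

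The main obstacle is the bookkeeping of which $z$-factors pair into rational numbers and which survive. This requires combining the explicit action of the diagrammatic generators on the seminormal basis (as computed in \cite{MR3092697}) with the eigenvalues of \Cref{u-t-c}, and using the adjacency relation $|\eta|=|\lambda|\pm1$ to match the surviving numerator and denominator factors. The asymmetry between cases (2) and (3) ultimately reflects the asymmetric definition of the content $c_\stt(i)$ at odd versus even levels, which is itself a consequence of the alternating structure of the partition algebra tower.
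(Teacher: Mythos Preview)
The paper does not prove this proposition; it is quoted from \cite[Proposition~4.12]{MR3092697} as background, so there is no in-paper argument to compare against. What you have written is a sketch of how one might extract the stated $z$-dependence from Enyang's explicit recursion, and the high-level structure is right: $\gamma_{(\eta,k-1)\to(\lambda,k)}$ is indeed a local quantity depending only on the last edge, and the relevant last-step contents are exactly the ones you list from \Cref{u-t-c}.

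That said, your sketch stops short of a proof at the crucial point. The assertion that in case~(1) ``every alternative content of the form $z-c(\beta)$ or $z-|\eta|$ is paired in the recursion with a matching factor from the opposite branching direction, so all $z$-factors cancel'' is not substantiated: you have not said what the ``opposite branching direction'' contributes, nor why the pairing is exact. Likewise in case~(2), the numerator $z-c(\alpha)-|\lambda|-1$ is correctly identified as the difference $c_\stt(k)-|\eta|$, but the provenance of the denominator $z-c(\alpha)-|\lambda|$ (``from the reverse edge'') is left as an allusion. In Enyang's actual derivation the recursion for $\langle f_\stt,f_\stt\rangle$ comes from computing the action of the specific diagrammatic generators $e_k$ and $\sigma_k$ on the seminormal basis and using invariance of the form; the rational factors $r,r'$ absorb genuine hook-length-type products that do not obviously cancel by a symmetry argument of the kind you gesture at. To turn your outline into a proof you would need to write down Enyang's explicit closed form for $\gamma_{(\eta,k-1)\to(\lambda,k)}$ and then factor it, rather than argue abstractly that $z$-factors must pair off.
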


We will also make use of the following result

\begin{proposition}[\mbox{See~\cite[Theorem~3.16]{MR2414949}}]\label{ssidempotents} For each $(\lambda, k)\in \mathcal{Y}_k$ define
$$F_{(\lambda, k)} = \sum_{\stt\in {\rm Std}_k(\lambda)} F_\stt.$$
Then the set $\{F_{(\lambda, k)} \mid (\lambda ,k)\in \mathcal{Y}_k\}$ forms a complete set of pairwise orthogonal primitive central idempotents in $P_k^{\mathbb{F}}(z)$ and we have
$$\Delta_{k,\mathbb{F}}^z(\mu) F_{(\lambda, k)} = \delta_{\lambda \mu} \Delta_{k,\mathbb{F}}^z(\mu)$$
for all $(\mu,k)\in \mathcal{Y}_k$.
\end{proposition}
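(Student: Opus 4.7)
The plan is to deduce this statement as a direct application of Mathas's general semisimple framework~\cite{MR2414949}: once the Jucys--Murphy elements satisfy the separation condition (as was observed immediately after \cref{exampleofcontent}), the seminormal idempotents $F_\stt$ and their partial sums automatically have the required structure. Since we are working over $\mathbb{F}$, where $P_k^\mathbb{F}(z)$ is semisimple and the cell modules $\Delta_{k,\mathbb{F}}^z(\lambda)$ give a complete list of pairwise non-isomorphic simple modules, everything reduces to verifying the two displayed formulas and invoking Artin--Wedderburn.

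First, from the relations $F_\sts F_\stt = \delta_{\sts\stt}F_\sts$ recorded in \cref{s-n-d}, I would immediately read off that $F_{(\lambda,k)}^2 = F_{(\lambda,k)}$ and that $F_{(\lambda,k)} F_{(\mu,k)} = 0$ whenever $\lambda \neq \mu$ (since paths ending at distinct partitions are themselves distinct). Next, using $f_\stt F_\sts = \delta_{\sts\stt} f_\stt$ from the same proposition, for any $\stt \in \Std_k(\mu)$ I would compute
\[
f_\stt F_{(\lambda,k)} = \sum_{\sts \in \Std_k(\lambda)} f_\stt F_\sts = \delta_{\lambda\mu} f_\stt.
\]
Since $\{f_\stt : \stt \in \Std_k(\mu)\}$ is a basis of $\Delta_{k,\mathbb{F}}^z(\mu)$, this proves the identity $\Delta_{k,\mathbb{F}}^z(\mu) F_{(\lambda,k)} = \delta_{\lambda\mu} \Delta_{k,\mathbb{F}}^z(\mu)$ asserted in the statement.

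Centrality, primitivity and completeness then all follow from semisimplicity. Since $F_{(\lambda,k)}$ acts as the identity on $\Delta_{k,\mathbb{F}}^z(\lambda)$ and as zero on every other simple module, in the Artin--Wedderburn decomposition
\[
P_k^\mathbb{F}(z) \;\cong\; \bigoplus_{(\lambda,k) \in \mathcal{Y}_k} \End_\mathbb{F}\bigl(\Delta_{k,\mathbb{F}}^z(\lambda)\bigr)
\]
it must coincide with the identity of the $\lambda$-summand, which is the unique primitive central idempotent of that block. Consequently the $F_{(\lambda,k)}$ are pairwise orthogonal primitive central idempotents. Moreover $\sum_{\lambda}F_{(\lambda,k)}$ acts as the identity on every simple module, and since $P_k^\mathbb{F}(z)$ acts faithfully on the direct sum of its simples in the semisimple case, this sum must equal $1$, which gives completeness.

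The only non-routine input is the separation condition itself, which guarantees that all denominators appearing in \cref{basis:1} are nonzero and hence that $F_\stt$ is a well-defined idempotent; this is precisely the hypothesis under which the Mathas machinery applies. Beyond that, the argument is entirely formal, and the only potential pitfall is bookkeeping around right- versus left-actions when transferring the module-theoretic statement to an algebraic one about central idempotents; this is handled cleanly by passing through the Wedderburn picture as above.
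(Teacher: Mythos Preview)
Your argument is correct and is essentially the standard one; note, however, that the paper does not actually prove this proposition at all --- it is simply quoted from Mathas \cite[Theorem~3.16]{MR2414949}, so there is no ``paper's own proof'' to compare against. Your sketch is a faithful reconstruction of how the result follows once the separation condition is in place.

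One small bookkeeping point: you invoke $F_\sts F_\stt=\delta_{\sts\stt}F_\sts$ and $f_\stt F_\sts=\delta_{\sts\stt}f_\stt$ for $\sts$ and $\stt$ ranging over \emph{all} of $\Std_k$, but \cref{s-n-d}(\ref{s-n-d:5}) as stated in this paper only records these identities for $\sts,\stt\in\Std_k(\lambda)$ with a fixed $\lambda$. The unrestricted orthogonality (which you need both for $F_{(\lambda,k)}F_{(\mu,k)}=0$ and for the action on $\Delta_{k,\mathbb{F}}^z(\mu)$ when $\mu\neq\lambda$) is of course part of Mathas's general framework and follows from the same separation argument, since each $F_\stt$ is a polynomial in the commuting $L_i$ and $f_\stt F_\sts = m_\stt F_\stt F_\sts$; but strictly speaking you are appealing to a slightly stronger statement than the one the paper has explicitly recorded.
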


\section{Generalising seminormal basis to the non-separated case}
 
Throughout this section we fix  $n\in \mathbb{Z}_{\geq 0}$.

We wish to study the representations of the partition algebra over the field $\mathbb{Q}$ and with parameter $n$. We will relate these to the representations over $R$ and $\mathbb{F}$ in the usual manner. Given $n\in \mathbb{Z}_{\geq 0}$, we define $\mathcal{O}_n$ to be the localisation of $R$ at the prime ideal $\mathfrak{p}=(z-n)$. Then we have a natural embedding  $R\hookrightarrow  \mathcal{O}_n \hookrightarrow \mathbb{F}$ and  projection map $\pi_n : \mathcal{O}_n \rightarrow \mathbb{Q}$ given by specialising to $z=n$. We can consider the $\mathcal{O}_n$- and $\mathbb{Q}$-algebras
$$P_k^{\mathcal{O}_n}(z)=P_k^R(z) \otimes_R \mathcal{O}_n \quad \mbox{and} \quad P_k^{\mathbb{Q}}(n)=P_k^{\mathcal{O}_n}(z) \otimes_{\mathcal{O}_n} \mathbb{Q},$$
and their cell modules
$$\Delta_{k,\mathcal{O}_n}^{z}(\lambda) = \Delta_{k,R}^z(\lambda)\otimes_R \mathcal{O}_n \quad \mbox{and} \quad \Delta_{k,\mathbb{Q}}^n(\lambda) = \Delta_{k,\mathcal{O}_n}^{z}(\lambda) \otimes_{\mathcal{O}_n} \mathbb{Q}$$
for $(\lambda, k)\in \mathcal{Y}_k$. To simplify notation, we will freely write $L_i$ and  $m_{\stt}$ instead of $L_i\otimes 1_{\mathbb{Q}}$ and $m_\stt \otimes 1_\mathbb{Q}$. 

Over the field $\mathbb{Q}$ the family of Jucys--Murphy elements $L_i$, $1 \leq i \leq k$  no longer satisfies the separation property, and so the algebra $P_k^\mathbb{Q} (n)$ is not semisimple in general. 

The radical, $\rad \Delta_{k,\mathbb{Q}}^n(\lambda)$, of the form $\langle\,  ,\,  \rangle$ on the cell modules $\Delta_{k,\mathbb{Q}}^n(\lambda)$  is non-trivial in general. We can define
$$L_{k,\mathbb{Q}}^n(\lambda) = \Delta_{k,\mathbb{Q}}^n(\lambda) / \rad \Delta_{k,\mathbb{Q}}^n(\lambda).$$
Then, by the general theory of cellular algebra developed by Graham and Lehrer in \cite{MR1376244} we have that 
$$\{ L_{k,\mathbb{Q}}^n(\lambda) \neq 0 \mid (\lambda ,k)\in \mathcal{Y}_k\}$$
form a complete   set of pairwise  non-isomorphic simple $P_k^{\mathbb{Q}}(n)$-modules (see \cite[Theorem 3.4]{MR1376244}). Moreover, we have that if $L_{k,\mathbb{Q}}^n(\mu)$ is a composition factor of $\Delta_{k,\mathbb{Q}}^n(\lambda)$ then $\mu\unlhd \lambda$ (see \cite[Proposition 3.6]{MR1376244}).

Any algebra  $A$ decomposes into a direct sum of indecomposable two-sided ideals, called the {\sf blocks} of the algebra $A$. It is a general fact that every simple $A$-module is a composition factor of a unique block of $A$. Moreover, if $A$ is a cellular algebra, then it is  known that all composition factors of a cell module belong to the same block of $A$ (see \cite[3.9.8]{MR1376244}).

The partition algebra $P_k^\mathbb{F}(z)$ studied in Section 2  is semisimple and so its block decomposition is simply given by the two-sided ideals generated by the primitive central idempotents given in Proposition \ref{ssidempotents}.

Mathas developed a general framework in \cite[Section~4]{MR2414949} which (partially) generalises the theory of seminormal bases and associated central idempotents to the non-separated case. We now recall his results in the case of the partition algebra $P_k^{\mathbb{Q}}(n)$.

\begin{definition}[\mbox{See~\cite[Definition~4.1]{MR2414949}}] \label{residueeee}
\ 

\begin{enumerate}
\item For  $\stt \in\Std_k$, we define $r_{n, \stt}(i) = \pi_n(c_{\stt}(i))$ and  the $n$-{\sf residue vector}, $r_n(\stt)$, to be the vector $(r_{n,\stt}(1), r_{n,\stt}(2), \ldots , r_{n,\stt}(k))$, that is, the specialisation of the content vector $c(\stt)$ at $z=n$.
\item For $  \sts, \stt \in \Std_k$ we say that $\stt$ and $\sts$ are in the same $n$-{\sf residue class} and write
$\stt \approx_n \sts$ if $r_n(\stt )=r_n(\sts).$ 

\item For $(\lambda, k)\in \mathcal{Y}_k$ and $\stt\in \Std_k(\lambda)$ we write
$$
[\stt]_n = \{ \sts\in \Std_k \mid  \sts \approx_n \stt\} \quad \mbox{and} \quad
[\stt]_n^{(\lambda, k)} = \{ \sts \in \Std_k(\lambda) \mid \sts \approx_n \stt \}.
$$
\item Let $(\lambda, k), (\mu,k)\in \mathcal{Y}_k$. We write $(\lambda, k) \sim_n (\mu,k)$ if there exists $\stt_0, \stt_1, \ldots , \stt_r\in \Std_k$ with $\stt_0\in \Std_k(\lambda)$ and $\stt_r\in \Std_k(\mu)$ such that $\stt_j \approx_n \stt_{j+1}$ for all $j=0, 1, \ldots , r-1$. In this case we say that $(\lambda,k)$ and $(\mu,k)$ are in the same $n$-linkage class.
\end{enumerate}

\end{definition}

\begin{example}
Let $n=2$ and $k=6$ as in \hyperref[piccy]{Figure~\ref*{piccy}}. 
We have two paths 
$$
   \stt=(\varnothing,\varnothing, (1),(1),(1), \varnothing,\varnothing)
\quad
\sts=(\varnothing,\varnothing, (1),(1),(2), (2), (3))
$$
whose content vectors are given in Example \hyperref[exampleofcontent]{\ref*{exampleofcontent}}.  Specialising to $z=2$ we have that $r_2(\stt)=r_2(\sts)=(0,0,1,1,2,2)$ and so $\stt \approx_2 \sts$ and $(\varnothing, 6)\sim_2 ((3),6)$.
 \end{example}

\begin{definition} 
Let $(\lambda, k)\in \mathcal{Y}_k$ and $\stt\in \Std_k(\lambda)$. We define
$$F_{[\stt]_n} = \sum_{\sts\in [\stt]_n} F_{\sts} \in P_k^{\mathbb{F}}(z) \quad \mbox{and} \quad \tilde{f}_{\stt ,n} = m_{\stt} F_{[\stt]_n}\in \Delta_{k,\mathbb{F}}^z(\lambda).$$
\end{definition}

The next lemma shows that in fact these elements are defined over the ring $\mathcal{O}_n$.

\begin{lemma}[\mbox{See~\cite[Lemma~4.2]{MR2414949}}]
Let $(\lambda, k)\in \mathcal{Y}_k$ and $\stt\in \Std_k(\lambda)$. Then we have
$$F_{[\stt]_n}  \in P_k^{\mathcal{O}_n}(z) \quad \mbox{and} \quad \tilde{f}_{\stt  ,n} \in \Delta_{k,\mathcal{O}_n}^z(\lambda).$$
\end{lemma}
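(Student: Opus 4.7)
The second claim follows directly from the first: once $F_{[\stt]_n} \in P_k^{\mathcal{O}_n}(z)$ is established, the element $\tilde f_{\stt,n} = m_\stt F_{[\stt]_n}$ is obtained by acting on $m_\stt \in \Delta_{k,\mathcal{O}_n}^z(\lambda)$ by an element of the $\mathcal{O}_n$-algebra $P_k^{\mathcal{O}_n}(z)$, and so lies in $\Delta_{k,\mathcal{O}_n}^z(\lambda)$. The plan is therefore to concentrate on the first statement and realise $F_{[\stt]_n}$ as an idempotent of the commutative Jucys--Murphy subalgebra $\mathcal{L}^{\mathcal{O}_n} \subseteq P_k^{\mathcal{O}_n}(z)$ generated by $L_1,\ldots,L_k$.

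The main structural input I would use is \cref{s-n-d}: the elements $F_\stu$ for $\stu\in\Std_k$ are pairwise orthogonal idempotents of $\mathcal{L}^\mathbb{F} := \mathcal{L}^{\mathcal{O}_n} \otimes_{\mathcal{O}_n} \mathbb{F}$ summing to $1$, which identifies $\mathcal{L}^\mathbb{F}$ with a product of copies of $\mathbb{F}$ indexed by $\Std_k$ and in which $F_{[\stt]_n}$ is simply the idempotent selecting the components indexed by the residue class $[\stt]_n$. Since each $L_i$ satisfies a monic polynomial relation of degree at most $|\Std_k|$ over $\mathcal{O}_n$, the subalgebra $\mathcal{L}^{\mathcal{O}_n}$ is finitely generated, and torsion-freeness over the discrete valuation ring $\mathcal{O}_n$ then forces it to be free of finite rank. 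Consequently the specialisation $\bar{\mathcal{L}} := \mathcal{L}^{\mathcal{O}_n}/(z-n)\mathcal{L}^{\mathcal{O}_n}$ is a finite-dimensional commutative $\mathbb{Q}$-algebra, and the explicit content formulae of \cref{u-t-c} show that its decomposition into local blocks is indexed precisely by the $n$-residue classes: two primitive idempotents $F_\sts, F_\stu$ of $\mathcal{L}^\mathbb{F}$ descend to the same local factor of $\bar{\mathcal{L}}$ if and only if the content vectors $c(\sts)$ and $c(\stu)$ agree modulo $(z-n)$, i.e.\ if and only if $\sts \approx_n \stu$.

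The final step is to lift the block idempotent of $\bar{\mathcal{L}}$ corresponding to $[\stt]_n$ back to $\mathcal{L}^{\mathcal{O}_n}$. In the $(z-n)$-adic completion $\hat{\mathcal{L}}^{\mathcal{O}_n}$ a unique such lift exists by Hensel's lemma, and on extending scalars to $\mathbb{F}$ it must coincide with $F_{[\stt]_n}$ by the defining projection property of the latter in the semisimple algebra $\mathcal{L}^\mathbb{F}$. Because $\mathcal{L}^{\mathcal{O}_n}$ is a free $\mathcal{O}_n$-module we have $\mathcal{L}^{\mathcal{O}_n} = \hat{\mathcal{L}}^{\mathcal{O}_n} \cap \mathcal{L}^\mathbb{F}$, so this lift in fact lies in $\mathcal{L}^{\mathcal{O}_n} \subseteq P_k^{\mathcal{O}_n}(z)$, which proves the claim. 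The main obstacle is the idempotent-lifting step, since $\mathcal{O}_n$ is a discrete valuation ring but not Henselian; this is overcome either by working in the completion as above and descending via the intersection identity, or by constructing $F_{[\stt]_n}$ directly as a sufficiently high power of a Lagrange interpolation polynomial in the $L_i$ with $\mathcal{O}_n$-coefficients whose image in $\bar{\mathcal{L}}$ is already the desired block idempotent.
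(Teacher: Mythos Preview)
The paper gives no proof of this lemma, citing instead \cite[Lemma~4.2]{MR2414949}. Your argument is correct and is the natural structural proof: realise $\mathcal{L}^{\mathcal{O}_n}$ as an $\mathcal{O}_n$-order in the split commutative semisimple algebra $\mathcal{L}^{\mathbb{F}}\cong\prod_{\stu\in\Std_k}\mathbb{F}$, identify the block idempotents of its reduction modulo $(z-n)$ with the $n$-residue classes, and lift. Two places merit one extra line of justification. First, that every maximal ideal of $\bar{\mathcal{L}}$ arises from some $\stu\in\Std_k$ is not automatic from the content formulae alone; it follows from lying-over applied to the integral extension $\mathcal{L}^{\mathcal{O}_n}\subseteq\prod_{\stu}\mathcal{O}_n$, so that every character of $\bar{\mathcal{L}}$ is the reduction of some projection $\chi_\stu:L_i\mapsto c_\stu(i)$. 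Second, that the Hensel lift equals $F_{[\stt]_n}$ holds because its image in each factor of $\prod_\stu\hat{\mathcal{O}}_n$ is an idempotent congruent to $0$ or $1$ modulo the maximal ideal, hence equal to $0$ or $1$, with the pattern forced by the residue class. With those remarks your proof is complete.

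The alternative you sketch at the end---exhibiting $F_{[\stt]_n}$ directly as an explicit polynomial in the $L_i$ with $\mathcal{O}_n$-coefficients via Lagrange interpolation over residues rather than contents---is closer in spirit to the argument in \cite{MR2414949}, and avoids the passage through the completion. That approach buys a concrete formula; yours buys the conceptual explanation that the $n$-residue classes are precisely the connected components of $\Spec\bar{\mathcal{L}}$. Either route is valid.
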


We can therefore make the following definition.

\begin{definition}
For each $(\lambda, k)\in \mathcal{Y}_k$ and $\stt\in \Std_k(\lambda)$ we define
$$g_\stt = \tilde{f}_{\stt ,n} \otimes 1_{\mathbb{Q}} \in \Delta_{k,\mathbb{Q}}^n(\lambda).$$
\end{definition}

\begin{proposition}[\mbox{See~\cite[Proposition~4.9]{MR2414949}}]\label{innerproductg}
Let $(\lambda, k)\in \mathcal{Y}_k$. The set $\{g_{\stt} \, : \, \stt\in \Std_k(\lambda)\}$ form a basis for $\Delta_{k,\mathbb{Q}}^n(\lambda)$. Moreover, for $  \sts, \stt \in \Std_k(\lambda)$ we have
$$\langle g_{\sts}, g_{\stt} \rangle = 
\left\{\begin{array}{ll}\langle m_{\sts} , g_{\stt} \rangle & \mbox{if $\sts \approx_n \stt$}
\\
0 & \mbox{otherwise} \end{array} \right.$$
\end{proposition}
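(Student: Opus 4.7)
\emph{Plan.} I would derive both claims by lifting them to the semisimple algebra $P_k^{\mathbb{F}}(z)$, where Propositions 2.5 and 2.7 give explicit control, and then pushing down to $\mathbb{Q}$ via the specialisation $\pi_n$. The key structural facts are: $F_{[\stt]_n}$ is a finite sum of pairwise orthogonal idempotents, so $F_{[\stt]_n}^2 = F_{[\stt]_n}$ and $F_{[\sts]_n}F_{[\stt]_n} = 0$ whenever $\sts \not\approx_n \stt$; and the Jucys--Murphy elements $L_i$ are self-adjoint under $\ast$ (immediate from the diagrammatic construction in \cite{MR2143201, MR3035512}, since each summand is invariant under horizontal reflection), so every polynomial in the commuting $L_i$'s, in particular each $F_{[\stt]_n}$, is self-adjoint.

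\emph{Basis claim.} I would first expand $\tilde f_{\stt,n} = m_\stt F_{[\stt]_n}$ in the seminormal basis of $\Delta_{k,\mathbb{F}}^z(\lambda)$. Inverting the triangular relation in Proposition 2.5(1) gives
\[
m_\stt \;=\; f_\stt \;+\; \sum_{\substack{\sts \succ \stt\\ \sts \in \Std_k(\lambda)}} a_\sts f_\sts.
\]
Applying $F_{[\stt]_n}$ and using $f_\sts F_\stu = \delta_{\sts\stu}f_\sts$ together with Proposition 2.7 (which kills $f_\sts F_\stu$ whenever $\sts\in\Std_k(\lambda)$ but $\stu\notin\Std_k(\lambda)$) retains only summands with $\sts \in \Std_k(\lambda)$ and $\sts \approx_n \stt$. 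Re-expanding the surviving $f_\sts$'s back in the $m$-basis yields $\tilde f_{\stt,n} = m_\stt + \sum_{\sts \succ \stt} b_\sts m_\sts$ with $b_\sts \in \mathcal{O}_n$ by Lemma 3.5. Specialising via $\pi_n$ preserves the leading coefficient $1$, so the transition matrix from $\{g_\stt\}$ to the cellular basis $\{m_\stt \otimes 1_\mathbb{Q}\}$ is unitriangular in $\succ$, hence invertible, proving the basis claim.

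\emph{Inner product formula.} Using the invariance $\langle xa, y\rangle = \langle x, ya^\ast\rangle$ of the cellular form together with $F_{[\stt]_n}^\ast = F_{[\stt]_n}$, I compute over $\mathbb{F}$
\[
\langle \tilde f_{\sts,n},\, \tilde f_{\stt,n}\rangle
\;=\; \langle m_\sts F_{[\sts]_n},\, m_\stt F_{[\stt]_n}\rangle
\;=\; \langle m_\sts,\, m_\stt F_{[\stt]_n} F_{[\sts]_n}\rangle.
\]
When $\sts \not\approx_n \stt$ the product $F_{[\stt]_n}F_{[\sts]_n}$ vanishes and the bracket is $0$. When $\sts \approx_n \stt$ one has $F_{[\sts]_n} = F_{[\stt]_n}$, which is idempotent, so the right-hand side reduces to $\langle m_\sts, \tilde f_{\stt,n}\rangle$. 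All of these equalities hold already over $\mathcal{O}_n$ by Lemma 3.5, so specialising to $\mathbb{Q}$ gives the two cases of the claimed formula.

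\emph{Main obstacle.} The proof is essentially bookkeeping, but two ingredients deserve care: first, the self-adjointness $L_i^\ast = L_i$, which is not stated in the excerpt and must be recorded from the diagrammatic definition; and second, the careful combination of triangularity in $\succ$ with the projection onto a single $n$-residue class in the $f$-basis expansion, which is exactly where Proposition 2.7 is invoked to discard contributions coming from cell labels other than $\lambda$.
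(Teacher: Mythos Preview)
The paper does not supply its own proof of this proposition; it simply cites Mathas \cite[Proposition~4.9]{MR2414949}. Your argument is correct and is essentially a reconstruction of Mathas's proof: unitriangularity of $\tilde f_{\stt,n}$ against the $m$-basis over $\mathcal{O}_n$ gives the basis claim after specialisation, and the inner-product identity follows from the self-adjointness of the $L_i$ (hence of $F_{[\stt]_n}$) together with orthogonality and idempotence of the residue-class idempotents. One small remark: your appeal to Proposition~2.7 to kill $f_\sts F_\stu$ when $\stu\notin\Std_k(\lambda)$ is slightly indirect as stated there, but it follows at once by writing $F_\sts = F_\sts F_{(\lambda,k)}$ and $F_\stu = F_{(\mu,k)} F_\stu$ and using the orthogonality of the central idempotents; alternatively one may simply note that all $F_\stt$, $\stt\in\Std_k$, are pairwise orthogonal, which is part of Mathas's general framework.
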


\begin{definition}
For each $(\lambda, k)\in \mathcal{Y}_k$, define $[(\lambda ,k)]_n = \{ \sts\in \Std_k \mid  \sts\approx_n \stt \,\, \mbox{for some $\stt \in \Std_k(\lambda)$}\}$ and set
$$F_{[(\lambda , k)]_n} = \sum_{\sts\in [(\lambda,k)]_n} F_\sts.$$
Then $F_{[(\lambda ,k)]_n}\in P_k^{\mathcal{O}_n}(z)$ and so we can define
$$G_{(\lambda,k)} = F_{[(\lambda,k)]_n}\otimes 1_{\mathbb{Q}} \in P_k^\mathbb{Q}(n).$$
\end{definition}

\noindent Note that if $(\mu,k)\sim_n (\lambda,k)$ then $G_{(\lambda,k)}=G_{(\mu,k)}$.

\noindent Let $\mathcal{Y}_k / \sim_n$ be a set of representatives for the $n$-linkage classes on $\mathcal{Y}_k$.

\begin{proposition}[\mbox{See~\cite[Corollary~4.6~and~4.7]{MR2414949}}] \label{mathasnecblock}
The set $\{G_{(\lambda, k)} \mid (\lambda, k)\in \mathcal{Y}_k / \sim_n\}$ form a complete set of pairwise orthogonal central idempotents in $P_k^{\mathbb{Q}}(n)$ and we have
$$\Delta_{k, \mathbb{Q}}^n(\mu) G_{(\lambda , k)} = \left\{ \begin{array}{ll} \Delta_{k,\mathbb{Q}}^n(\mu) & \mbox{if $(\mu,k)\sim_n(\lambda,k)$}\\ 0 & \mbox{otherwise}\end{array}\right.$$
In particular, if $\Delta_{k,\mathbb{Q}}^n(\lambda)$ and $\Delta_{k,\mathbb{Q}}^n(\mu)$ are in the same block then $(\lambda,k)\sim_n(\mu,k)$.
\end{proposition}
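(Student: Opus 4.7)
The plan is to transport Proposition~\ref{ssidempotents} from the semisimple algebra $P_k^\mathbb{F}(z)$ down to $P_k^\mathbb{Q}(n)$ through the integral form $P_k^{\mathcal{O}_n}(z)$. First I would establish the combinatorial identification
\[
F_{[(\lambda,k)]_n}\;=\;\sum_{(\mu,k)\sim_n(\lambda,k)}F_{(\mu,k)}\quad\text{in }P_k^\mathbb{F}(z),
\]
by unpacking the definition of $[(\lambda,k)]_n$ as a union of $\approx_n$-classes and reorganising the resulting sum of $F_\sts$'s according to the $\sim_n$-linkage class containing the endpoint of each path. Granted this, $F_{[(\lambda,k)]_n}$ is a sum of pairwise orthogonal primitive central idempotents of $P_k^\mathbb{F}(z)$, hence central itself; pairwise orthogonality across distinct $\sim_n$-classes and the partition-of-unity condition then follow directly from the analogous statements in Proposition~\ref{ssidempotents}.

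Next, the lemma cited immediately before the proposition gives $F_{[\stt]_n}\in P_k^{\mathcal{O}_n}(z)$, and since $F_{[(\lambda,k)]_n}$ is a finite sum of such integral elements, it too lies in $P_k^{\mathcal{O}_n}(z)$. The base change $-\otimes_{\mathcal{O}_n}\mathbb{Q}$ is exact and preserves multiplication, so centrality, idempotence, orthogonality, and completeness all descend, yielding the stated properties of $\{G_{(\lambda,k)}:(\lambda,k)\in\mathcal{Y}_k/\sim_n\}$ in $P_k^\mathbb{Q}(n)$. For the action on cell modules, Proposition~\ref{ssidempotents} yields $\Delta_{k,\mathbb{F}}^z(\mu)F_{(\rho,k)}=\delta_{\mu\rho}\Delta_{k,\mathbb{F}}^z(\mu)$, which combined with the identification above gives
\[
\Delta_{k,\mathbb{F}}^z(\mu)F_{[(\lambda,k)]_n}=\begin{cases}\Delta_{k,\mathbb{F}}^z(\mu)&(\mu,k)\sim_n(\lambda,k),\\ 0&\text{otherwise.}\end{cases}
\]
Both sides already live on $\Delta_{k,\mathcal{O}_n}^z(\mu)$, so the identity specialises to the claimed action of $G_{(\lambda,k)}$ on $\Delta_{k,\mathbb{Q}}^n(\mu)$. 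The final ``in particular'' statement is then a formal consequence: central idempotents act identically on cell modules sharing a block, and $G_{(\lambda,k)}$ acts as the identity on $\Delta_{k,\mathbb{Q}}^n(\lambda)$, so it must also act as the identity on any $\Delta_{k,\mathbb{Q}}^n(\mu)$ in the same block, forcing $(\mu,k)\sim_n(\lambda,k)$.

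The chief technical content lives in the cited Mathas lemma, whose proof ensures that individual idempotents $F_\sts$ --- which generically have poles at $z=n$ arising from distinct paths whose content vectors coincide after the specialisation $z\mapsto n$ --- combine into pole-free elements of $P_k^{\mathcal{O}_n}(z)$ once summed over $\approx_n$-classes. Apart from invoking this, the combinatorial identification in the first step is the main thing to verify by hand: it requires careful bookkeeping of which residue vectors are realised by paths ending at each vertex of level $k$ and, in effect, repackages the block structure of $P_k^\mathbb{Q}(n)$ in terms of the residue combinatorics of the branching graph $\mathcal{Y}$.
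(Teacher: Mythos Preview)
The paper does not give its own proof here: the proposition is quoted directly from Mathas with only a citation. Your outline follows the expected shape of such an argument --- work over $\mathbb{F}$ using Proposition~\ref{ssidempotents}, pass to $P_k^{\mathcal{O}_n}(z)$ via the integrality lemma, then specialise --- and the later steps (integrality, specialisation, action on cell modules, the final block consequence) are sound.

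There is, however, a genuine gap in your first step. The identification
\[
F_{[(\lambda,k)]_n}\;=\;\sum_{(\mu,k)\sim_n(\lambda,k)}F_{(\mu,k)}
\]
over $\mathbb{F}$ amounts to the set-theoretic equality $[(\lambda,k)]_n=\bigcup_{(\mu,k)\sim_n(\lambda,k)}\Std_k(\mu)$, and this is \emph{not} mere bookkeeping. The inclusion $\subseteq$ is immediate from the definitions, but for $\supseteq$ you would need that \emph{every} path $\sts\in\Std_k(\mu)$ with $(\mu,k)\sim_n(\lambda,k)$ is $\approx_n$-equivalent to some path in $\Std_k(\lambda)$. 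Nothing in the definitions forces this: the relation $(\mu,k)\sim_n(\lambda,k)$ is witnessed by one particular pair of paths, and other paths in $\Std_k(\mu)$ may have residue vectors not realised by any path ending at $(\lambda,k)$. Without this inclusion $F_{[(\lambda,k)]_n}$ need not be a sum of full primitive central idempotents $F_{(\mu,k)}$, hence need not be central in $P_k^\mathbb{F}(z)$, and your descent argument does not get started. (The same point is implicit in the paper's unproved remark, just before the proposition, that $G_{(\lambda,k)}=G_{(\mu,k)}$ whenever $(\mu,k)\sim_n(\lambda,k)$.) You correctly flagged this step as the one needing real work, but ``reorganising the sum according to the endpoint'' only delivers one direction; closing it requires either Mathas's more careful general argument or, for the partition algebra specifically, the explicit description of $\approx_n$-classes given later in Proposition~\ref{intervals}.
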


We will see in the next section that in fact the $G_{(\lambda,k)}$'s are primitive central idempotents; equivalently, that  $\Delta_{k,\mathbb{Q}}^n(\lambda)$ and $\Delta_{k,\mathbb{Q}}^n(\mu)$ are in the same block if and only if $(\lambda,k)\sim_n(\mu,k)$.

\section{Residue classes, Reflection geometry and blocks}

In this section we give a geometrical interpretation of the $n$-linkage classes on $\mathcal{Y}_k$ and the $n$-residue classes on $\Std_k$. The main idea was already introduced in \cite{BDK} but here we take it further by looking at the geometry on the whole branching graph $\mathcal{Y}$ (rather than just on one level $\mathcal{Y}_k$).

Let $\{\varepsilon_0,\varepsilon_1, \varepsilon_2, \dots\}$ be a set of formal symbols and set
	\[\mathbb{Z}^\infty=\prod_{i\geq 0}\mathbb{Z}\varepsilon_i\]
We will write each $x=\sum_{i\geq 0} x_i \varepsilon_i\in \mathbb{Z}^\infty$ as a vector 
$x=(x_0,x_1,x_2, \ldots).$
We take the infinite symmetric group $\mathfrak{S}_{\infty}$ to be the group generated by the transpositions $s_{i,j}$ (for $i,j\geq 0$ and $i\neq j$) where each $s_{i,j}$ acts on $\mathbb{Z}^\infty$ by permuting the $i$-th  and $j$-th coordinates of the vectors.

Define the graph $\mathcal{Z}$ with vertex set $\cup_{k\geq 0}\mathcal{Z}_k$ where $\mathcal{Z}_k=\mathbb{Z}^{\infty}$ for all $k\geq 0$ and 
\begin{enumerate}
\item if $k$ is even and $x\in \mathcal{Z}_k$, an edge $x\rightarrow y$ if $y\in  \mathcal{Z}_{k+1}$ with $y=x-\varepsilon_i$ for some $i\geq 0$,
\item if $k$ is odd and $x\in \mathcal{Z}_k$, an edge $x\rightarrow y$ if $y\in  \mathcal{Z}_{k+1}$ with $y=x+\varepsilon_i$ for some $i\geq 0$.
\end{enumerate}

  For any partition $\lambda=(\lambda_1, \lambda_2, \lambda_3,\ldots )$ define
$$\lambda_{[n]} = (n-|\lambda|, \lambda_1, \lambda_2, \lambda_3,\ldots )\in \mathbb{Z}^{\infty}$$
(adding infinitely many zeros after the last part of $\lambda$).
Define also $\rho\in \mathbb{Z}^\infty$ by 
$$\rho = (0, -1,-2,-3, \ldots).$$
We define an embedding $\varphi_n$ of the graph $\mathcal{Y}$ into $\mathcal{Z}$ as follows. 
For each $k\geq 0$ and each $(\lambda, k)\in \mathcal{Y}_k$ we set
\begin{equation}\label{embedding}
\varphi_n(\lambda, k) = \left\{ \begin{array}{ll} \lambda_{[n]} +\rho & \mbox{if  $k$ is even,}\\
 \lambda_{[n-1]}+\rho & \mbox{if  $k$ is odd.}
\end{array}\right.
\end{equation}
Note that $\varphi_n(\mathcal{Y})$ is then the full subgraph of $\mathcal{Z}$ on the vertex set $\varphi_n(\cup_{k\geq 0}\mathcal{Y}_k)$.

\begin{remark}\label{wallalcove}
Note that if $x=(x_0,x_1,x_2,x_3, \ldots)$ is a vertex in $\varphi_n(\mathcal{Y})$ then we have
$$x_1>x_2>x_3> \ldots $$
Moreover we either have $x_0>x_1$, or $x_0=x_j$ for some $j\geq 1$, or $x_{j-1}>x_0>x_{j}$ for some $j>1$.
\end{remark}

The composition factors of the cell modules $\Delta_{k,\mathbb{Q}}^n(\lambda)$ for all $(\lambda, k)\in \mathcal{Y}_k$ were originally described (for $k$ even and $n\neq 0$) by P. Martin in \cite{mar1} in terms of $n$-pairs of partitions. This result was then extended to include the case $n=0$ by Doran and Wales in \cite{DW}.   We now reformulate it in terms of reflections on the graph $\mathcal{Z}$.  This reformulation had already been observed in \cite{BDK} in a slightly different form.

\begin{definition}
Let $\lambda,\mu$ be partitions. We say that $(\lambda,\mu)$ from an $n$-pair if $\lambda \subset \mu$ and $\mu$ differs from $\lambda$ by a single row of boxes the last of which having content $n-|\lambda|$.
\end{definition}

\begin{lemma}\label{npairreflection}
Let $(\lambda,k), (\mu,k)\in \mathcal{Y}_k$ with $|\lambda|<|\mu|$. Then we have
$\varphi_n(\mu,k) = s_{0,j}(\varphi_n(\lambda,k))$ for some $j\geq 1$
if and only if either $k$ is even and $(\lambda,\mu)$ form an $n$-pair, or $k$ is odd and $(\lambda,\mu)$ form an $(n-1)$-pair. 
\end{lemma}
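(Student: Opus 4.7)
The plan is to prove both directions by direct coordinate computation, using the explicit description of $\varphi_n$. I would handle the case $k$ even in full and then note that the case $k$ odd is identical after replacing $n$ by $n-1$ throughout.

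First I would unfold the definitions: for $k$ even,
\[
\varphi_n(\lambda,k) = (n-|\lambda|,\ \lambda_1-1,\ \lambda_2-2,\ \lambda_3-3,\ \ldots),
\]
and similarly for $\mu$. Applying $s_{0,j}$ for $j\geq 1$ swaps the $0$th and $j$th coordinates, giving
\[
s_{0,j}(\varphi_n(\lambda,k)) = (\lambda_j-j,\ \lambda_1-1,\ \ldots,\ \lambda_{j-1}-(j-1),\ n-|\lambda|,\ \lambda_{j+1}-(j+1),\ \ldots).
\]
Equating this with $\varphi_n(\mu,k)$ coordinate by coordinate yields three conditions: (i) $\mu_i=\lambda_i$ for every $i\geq 1$ with $i\ne j$; (ii) $\mu_j - j = n-|\lambda|$; and (iii) $n-|\mu| = \lambda_j-j$. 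Note that (iii) is forced by (i) and (ii) after taking the difference of the sizes, so the system reduces to (i) and (ii).

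For the forward direction, (i) together with $|\lambda|<|\mu|$ forces $\mu_j>\lambda_j$, so $\mu$ is obtained from $\lambda$ by adding boxes to row $j$ only; the last such box sits at $(j,\mu_j)$ and, by (ii), has content $\mu_j - j = n-|\lambda|$. Hence $(\lambda,\mu)$ is an $n$-pair. Conversely, if $(\lambda,\mu)$ is an $n$-pair with the added row being row $j$, then (i) holds automatically and the defining content condition on the last added box is precisely (ii); reversing the computation above yields $\varphi_n(\mu,k) = s_{0,j}(\varphi_n(\lambda,k))$. For $k$ odd the same argument applies verbatim with $\lambda_{[n-1]}$ replacing $\lambda_{[n]}$, which shifts the $0$th coordinate by $1$ and turns the content condition into $\mu_j-j = n-1-|\lambda|$, i.e. the condition for an $(n-1)$-pair.

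The hard part is really only bookkeeping: keeping straight that coordinate indexing on $\mathbb{Z}^\infty$ starts at $0$ while partition rows are indexed from $1$, so that $s_{0,j}$ with $j\geq 1$ matches the row of $\lambda$ in which boxes are added. One small compatibility check, which is immediate from the hypothesis $(\mu,k)\in\mathcal{Y}_k$, is that $s_{0,j}(\varphi_n(\lambda,k))$ really does lie in $\varphi_n(\mathcal{Y})$, i.e.\ that the strict-decrease property $x_1>x_2>x_3>\cdots$ of Remark~\ref{wallalcove} survives the reflection; this holds because $\mu$ is a partition by assumption, so no extra case analysis is needed.
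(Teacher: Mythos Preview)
Your proposal is correct and follows essentially the same approach as the paper: a direct coordinate computation unpacking $\varphi_n$ and $s_{0,j}$ and matching entries. The paper organises the two directions in the opposite order and verifies the identity $n-|\mu|=\lambda_j-j$ explicitly rather than deducing it from your conditions (i) and (ii), but the argument is the same in substance.
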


\begin{proof} (See also \cite[proof of Theorem 6.4]{BDK}). We prove the result for $k$ even. The case $k$ odd is identical.
Suppose that $(\lambda, \mu)$ form an $n$-pair. Then by definition, there exists some $j\geq 1$ such that
$$\mu = (\lambda_1, \lambda_2 , \ldots , \lambda_{j-1}, \mu_j, \lambda_{j+1}, \ldots )$$
with $\mu_j - j = n-|\lambda|$. So we have
$$
\varphi_n(\mu,k) = (n-|\mu|, \lambda_1 - 1, \ldots , \lambda_{j-1}-(j-1), n-|\lambda|, \lambda_{j+1}-(j+1), \ldots ).
$$
Now, as $|\mu| = |\lambda| + (\mu_j - \lambda_j)$ and $\mu_j = n-|\lambda| + j$  we have
\begin{eqnarray*}
n-|\mu| &=& n-(|\lambda|+(\mu_j - \lambda_j))\\
&=& n-|\lambda| - (n-|\lambda| +j) + \lambda_j\\
&=& \lambda_j -j.
\end{eqnarray*}
Thus we have $\varphi_n(\mu, k) = s_{0,j}(\varphi_n(\lambda,k))$ as required.

Conversely, suppose that $\varphi_n(\mu, k) = s_{0,j}(\varphi_n(\lambda,k))$ for some $j\geq 1$. Then $\lambda$ and $\mu$ only differ in row $j$ and the last box in row $j$ of $\mu$ has content $\mu_j - j = n-|\lambda|$. So $(\lambda,\mu)$ form an $n$-pair.
\end{proof}

Using Lemma \ref{npairreflection}, we can now reformulate Martin's (and Doran and Wales') result about the blocks and decomposition numbers for the partition algebra and extend it to include the case when $k$ is odd.

\begin{theorem}\label{cfcells} Each block of $P_k^\mathbb{Q}(n)$ is given by a maximal chain
$(\lambda^{(1)}, k), (\lambda^{(2)}, k), \ldots , (\lambda^{(r)},k)$
for some $r\geq 1$ satisfying
$|\lambda^{(1)}| < |\lambda^{(2)}| < \ldots < |\lambda^{(r)}|$
and 
$\varphi_n(\lambda^{(j+1)},k) = s_{0,j} (\varphi_n(\lambda^{(j)},k))$ for all $j=1, \ldots , r-1$.

Moreover, the cell module $\Delta_{k,\mathbb{Q}}^n(\lambda^{(j)})$ for $j=1, \ldots, r-1$, has two composition factors, namely $L_{k,\mathbb{Q}}^n(\lambda^{(j)})$ as its head and $L_{k,\mathbb{Q}}^n(\lambda^{(j+1)})$ as its socle, and $\Delta_{k,\mathbb{Q}}^n(\lambda^{(r)}) = L_{k,\mathbb{Q}}^n(\lambda^{(r)})$ is simple, except for the case when $k$ is even, $n=0$, and $\lambda = \varnothing$ where we have $\Delta_{k,\mathbb{Q}}^0(\varnothing) \cong L_{k,\mathbb{Q}}^0((1))$.
\end{theorem}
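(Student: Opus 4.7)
The plan is to deduce this theorem from the classical result of Martin (\cite{mar1}) on the blocks and cell module composition factors of $P_{2k}^{\mathbb{Q}}(n)$ for $k$ even and $n \neq 0$, together with its extension to $n=0$ by Doran and Wales (\cite{DW}); the reflection language of the statement is then obtained by applying Lemma~\ref{npairreflection}, and the odd case is reduced to the even case at a shifted parameter.

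For $k$ even, Martin's theorem asserts precisely that the blocks of $P_{2k}^{\mathbb{Q}}(n)$ are the maximal chains $\lambda^{(1)} \subset \lambda^{(2)} \subset \cdots \subset \lambda^{(r)}$ in which each consecutive pair $(\lambda^{(j)}, \lambda^{(j+1)})$ forms an $n$-pair, and that for $j < r$ the cell module $\Delta_{k,\mathbb{Q}}^n(\lambda^{(j)})$ has a two-step composition series with head $L_{k,\mathbb{Q}}^n(\lambda^{(j)})$ and socle $L_{k,\mathbb{Q}}^n(\lambda^{(j+1)})$, while $\Delta_{k,\mathbb{Q}}^n(\lambda^{(r)})$ is simple. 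Doran and Wales complete the picture at $n=0$, introducing the exceptional isomorphism $\Delta_{k,\mathbb{Q}}^0(\varnothing) \cong L_{k,\mathbb{Q}}^0((1))$. Using Lemma~\ref{npairreflection} to rewrite the $n$-pair relation as $\varphi_n(\lambda^{(j+1)},k) = s_{0,j}(\varphi_n(\lambda^{(j)},k))$ then yields the even case directly. One should also verify that the resulting chains are exactly the $\sim_n$ classes (so that the necessary condition from Proposition~\ref{mathasnecblock} is also sufficient), which follows from computing content vectors via Proposition~\ref{u-t-c} and matching them with the residue classes of Definition~\ref{residueeee}.

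For $k$ odd, I would reduce to the even case at parameter $n-1$. The shift from $n$ to $n-1$ in the embedding formula (\ref{embedding}) for odd levels is the geometric shadow of a classical relationship between $P_{2k+1}^{\mathbb{Q}}(n)$ and $P_{2k}^{\mathbb{Q}}(n-1)$ arising from an idempotent localisation, under which cell modules and their cellular ideal filtration correspond. Transporting the even-case result across this reduction gives the required chain description and composition factor data for odd $k$; no new exception appears because the configuration yielding $\Delta^0(\varnothing) \cong L^0((1))$ only occurs on even levels.

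\textbf{Main obstacle:} The delicate step is the odd-$k$ reduction: making the cell module correspondence fully precise and verifying the non-appearance of exceptional cases on odd levels both require combinatorial bookkeeping. If this reduction is considered too indirect, an alternative is to repeat Martin's original line of argument directly on odd levels, using the Jucys--Murphy data of Proposition~\ref{u-t-c} to control central characters and Proposition~\ref{filtration} to propagate block information across levels of different parity, with Proposition~\ref{mathasnecblock} providing the necessary direction of the block implication from the start.
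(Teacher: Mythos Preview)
Your approach is essentially the same as the paper's: cite Martin and Doran--Wales for the even case, translate via Lemma~\ref{npairreflection}, and reduce the odd case to the even case at parameter $n-1$ through an idempotent localisation (the paper invokes the Morita equivalence of \cite[Section~3]{martin2000partition}, with details in \cite[Theorem~5.2]{BDK}).

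There is, however, a genuine gap in your odd-$k$ argument at $n=1$. When $k$ is odd and $n=1$, the reduction lands in $P_{2k}^{\mathbb{Q}}(0)$, which is precisely where the exceptional isomorphism $\Delta^0(\varnothing)\cong L^0((1))$ lives; so your claim that ``no new exception appears because the configuration \dots\ only occurs on even levels'' is not yet justified. In fact the Morita equivalence \emph{fails} here: $P_{2k+1}^{\mathbb{Q}}(1)$ has one more simple module than $P_{2k}^{\mathbb{Q}}(0)$ (the bilinear form on every $\Delta_{2k+1,\mathbb{Q}}^1(\lambda)$, in particular on $\Delta_{2k+1,\mathbb{Q}}^1(\varnothing)$, is non-degenerate, so $L_{2k+1,\mathbb{Q}}^1(\varnothing)\neq 0$). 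The paper salvages the argument by retaining the idempotent \emph{functor} (which still sends cell modules to cell modules) and invoking \cite[(6.6b) Lemma]{Green} to transport decomposition numbers for $\mu\neq\varnothing$; the remaining case $\mu=\varnothing$ is handled by observing that $(\varnothing,2k+1)$ is maximal in the dominance order, so $L_{2k+1,\mathbb{Q}}^1(\varnothing)$ occurs only in $\Delta_{2k+1,\mathbb{Q}}^1(\varnothing)$, with multiplicity one by cellularity. You should fill in this case explicitly.

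A minor point: your remark about verifying that the chains coincide with the $\sim_n$ classes is not part of this theorem. That identification is established later (Theorem~\ref{blocks}), using this theorem together with Proposition~\ref{intervals}; it should not appear in the proof here.
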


\begin{remark}\label{remarkwall}
Note in particular that if $\varphi_n(\lambda ,k) = s_{0,j}(\varphi_n(\lambda ,k))$ for some $j\geq 1$ then $(\lambda,k)$ is alone in its block and $\Delta_{k,\mathbb{Q}}^n(\lambda) = L_{k,\mathbb{Q}}^n(\lambda)$.
\end{remark}

\begin{proof}
For the case $k$ is even, this is just a reformulation of  \cite[Proposition 9]{mar1} (and \cite{DW} for the case $n=0$). The case $k$ is odd and $n>1$ is obtained using the Morita equivalence between $P_{2k+1}^\mathbb{Q}(n)$ and $P_{2k}^\mathbb{Q}(n-1)$ given in \cite[Section 3]{martin2000partition} (see also \cite[Theorem 5.2]{BDK} for a detailed proof). This equivalence is obtained using an idempotent $\xi\in P_{2k+1}^\mathbb{Q}(n)$. We have $\xi P_{2k+1}^\mathbb{Q}(n) \xi \cong P_{2k}^\mathbb{Q}(n-1)$ and under this isomorphism we have $\xi \Delta_{2k+1, \mathbb{Q}}^n(\lambda) \cong \Delta_{2k,\mathbb{Q}}^{n-1}(\lambda)$ for all $(\lambda, 2k+1)\in \mathcal{Y}_{2k+1}$. 

Now when $n=1$, an easy calculation shows that the form
 $\langle \, , \, \rangle$ on any $\Delta_{2k+1,\mathbb{Q}}^1(\lambda)$ for $\lambda\in \mathcal{Y}_{2k+1}$ is non-degenerate.
  In particular,  the form
 $\langle \, , \, \rangle$ on 
 $\Delta_{2k+1,\mathbb{Q}}^1(\varnothing)$ is non-degenerate 
and so $L_{2k+1, \mathbb{Q}}^1(\varnothing) \neq 0$.   
This shows that the above Morita equivalence cannot hold in this case (as the number of simple modules of these two algebras does not coincide). However, we still have an idempotent functor from the category of $P_{2k+1}^\mathbb{Q}(1)$-modules to the category of $\xi P_{2k+1}^\mathbb{Q}(1) \xi \cong P_{2k}^\mathbb{Q}(0)$-modules taking cell modules to the corresponding cell modules (by an identical argument to that used in the proof of   \cite[Theorem 5.2]{BDK}).
  Given  $(\lambda,2k+1) \in \mathcal{Y}_{2k+1}$ and $(\mu,2k+1) \in\mathcal{Y}_{2k+1} \setminus \{\varnothing\}$ we have that
\begin{align*}
[ \Delta_{2k+1,\mathbb{Q}}^1((\lambda,2k+1)):  L_{2k+1,\mathbb{Q}}^1((\mu,2k+1))] &=
[\xi\Delta_{2k+1,\mathbb{Q}}^1((\lambda,2k+1)): \xi L_{2k+1,\mathbb{Q}}^1((\mu,2k+1))] 
\\
&=
  [\Delta_{2k,\mathbb{Q}}^0((\lambda,2k)) : L_{2k,\mathbb{Q}}^0((\mu,2k))]
\end{align*}
  by \cite[(6.6b)Lemma]{Green}.  Finally, we observe that $(\varnothing,2k+1)$ is maximal
  in the dominance order  and so   
 $\Delta_{2k+1, \mathbb{Q}}^1(\varnothing) $ is the 
  unique cell-module in which $L_{2k+1, \mathbb{Q}}^1(\varnothing) $  appears as a composition factor (and it appears with multiplicity equal to 1  in this module, by  cellularity).   Therefore the decomposition numbers are as claimed.    The structure of the cell modules follows immediately (because there are only two composition factors and each cell module has a simple head).  
 \end{proof}

%
%
%

Motivated by Theorem \ref{cfcells}  and Remark \ref{wallalcove}, we make the following definition.

\begin{definition}
Let $x=(x_0,x_1,x_2, \ldots )$ be a vertex in $\mathcal{Z}$. We say that $x$ is
\begin{enumerate}
\item   on the $j$-th wall if $x_0=x_j$ for $j\geq 1$; 
\item   in the first alcove if
$x_0>x_1>x_2>x_3>\ldots$
\item  in the $j$-th alcove, for some $j>1$, if
$x_1>x_2>\ldots > x_{j-1} > x_0 > x_j > x_{j+1} > \ldots$ 
\end{enumerate}
\end{definition}

\noindent The following lemma follows directly from \hyperref[wallalcove]{Remark~\ref*{wallalcove}}.

\begin{lemma}
Let $x$ be a vertex in $\varphi_n(\mathcal{Y})$. Then there exists a unique $j\geq 1$ such that either $x$ is in the $j$-th alcove or $x$ is on the $j$-th wall.
\end{lemma}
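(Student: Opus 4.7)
The plan is to observe that this lemma is essentially a direct translation of Remark~\ref{wallalcove} into the new terminology of walls and alcoves, and then to address uniqueness, which is forced by the strict inequality $x_1 > x_2 > x_3 > \cdots$ that holds on every vertex of $\varphi_n(\mathcal{Y})$.

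First I would fix $x = (x_0, x_1, x_2, \ldots) \in \varphi_n(\mathcal{Y})$ and invoke Remark~\ref{wallalcove}. This gives three mutually exhaustive cases (though we still need to check exclusivity): either $x_0 > x_1$, or $x_0 = x_j$ for some $j \geq 1$, or $x_{j-1} > x_0 > x_j$ for some $j > 1$. These translate directly to: $x$ lies in the first alcove, $x$ lies on the $j$-th wall, or $x$ lies in the $j$-th alcove. So existence is immediate.

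For uniqueness, the key observation is that $x_1 > x_2 > x_3 > \cdots$ is a strictly decreasing sequence. If $x$ lies on the $j$-th wall, then $x_0 = x_j$; since the $x_i$ for $i \geq 1$ are pairwise distinct, the index $j$ is uniquely determined. Similarly, if $x$ lies in the $j$-th alcove with $j > 1$, the condition $x_{j-1} > x_0 > x_j$ together with the strict decrease of $(x_i)_{i \geq 1}$ pins down $j$ as the unique index such that $x_0$ falls between $x_{j-1}$ and $x_j$. Moreover, being on a wall is incompatible with being in any alcove (walls have $x_0 = x_j$ for some $j \geq 1$, while the alcove conditions all imply $x_0 \neq x_i$ for every $i \geq 1$), and the alcove indices are pairwise incompatible by the same strict decrease argument.

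No genuine obstacle arises: the lemma is a bookkeeping statement, and the only thing one has to be a little careful about is allowing $j=1$ uniformly in both the wall case and the alcove case (the first alcove corresponds to $j=1$ with the convention that the condition $x_{j-1} > x_0$ is vacuous, so that $x_0 > x_1$ alone suffices). Once this convention is in place, the three possibilities of Remark~\ref{wallalcove} partition the vertices of $\varphi_n(\mathcal{Y})$ into the disjoint union of alcoves and walls indexed by $j \geq 1$, giving existence and uniqueness simultaneously.
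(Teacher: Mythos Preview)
Your proposal is correct and follows exactly the same approach as the paper: the paper simply states that the lemma ``follows directly from Remark~\ref{wallalcove}'' without further elaboration, and your argument spells out precisely why (existence from the trichotomy in the remark, uniqueness from the strict decrease $x_1 > x_2 > x_3 > \cdots$). There is nothing to add.
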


\begin{lemma}\label{reflectionimpliesadjacent} Let $(\lambda, k), \pointmu \in \mathcal{Y}_k$ with $|\lambda|<|\mu|$. If $\varphi_n\pointla =s_{0,j}(\varphi_n\pointmu )$ for some $j\geq 1$ then $\varphi_n \pointla $ is in the $j$-th alcove and $\varphi_n\pointmu $ is in the $(j+1)$-th alcove.
\end{lemma}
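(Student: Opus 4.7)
The plan is to work directly with the coordinate description of $\varphi_n$ given in \cref{embedding} together with \hyperref[wallalcove]{Remark~\ref*{wallalcove}}. Assume $k$ is even for concreteness; the $k$ odd case is identical with $n$ replaced by $n-1$ throughout. Write $x=\varphi_n(\lambda,k)$ and $y=\varphi_n(\mu,k)$, so $x_0 = n-|\lambda|$, $y_0 = n-|\mu|$, and $x_i = \lambda_i - i$, $y_i = \mu_i - i$ for $i\geq 1$. The hypothesis $x = s_{0,j}(y)$ means $x_0 = y_j$, $x_j = y_0$, and $x_i = y_i$ for every $i\notin\{0,j\}$.

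First I would extract from $|\lambda|<|\mu|$ the crucial inequality $x_0 > y_0$, which under the swap translates to $y_j > y_0$ and $x_0 > x_j$. Next I would apply \hyperref[wallalcove]{Remark~\ref*{wallalcove}} to the two vertices $x,y \in \varphi_n(\mathcal{Y})$ to obtain the strict chains $x_1 > x_2 > \cdots$ and $y_1 > y_2 > \cdots$. The latter chain, applied at positions $j{-}1$ and $j{+}1$ and transported through the swap, yields $x_{j-1} = y_{j-1} > y_j = x_0$ and $x_0 = y_j > y_{j+1} = x_{j+1}$.

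Combining these inequalities, for $x$ one has
\[
x_1 > x_2 > \cdots > x_{j-1} > x_0 > x_j > x_{j+1} > \cdots,
\]
placing $x$ in the $j$-th alcove (with the convention that the condition $x_{j-1} > x_0$ is vacuous when $j=1$, in which case we are simply stating $x_0 > x_1$, i.e.\ that $x$ lies in the first alcove). For $y$, the chain $y_1 > \cdots > y_{j-1} > y_j$ follows from \hyperref[wallalcove]{Remark~\ref*{wallalcove}}, while the identifications $y_j = x_0$, $y_0 = x_j$, and $y_{j+1} = x_{j+1}$ together with $x_0 > x_j > x_{j+1}$ give $y_j > y_0 > y_{j+1}$, so
\[
y_1 > y_2 > \cdots > y_j > y_0 > y_{j+1} > \cdots,
\]
placing $y$ in the $(j{+}1)$-th alcove as required.

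There is essentially no obstacle: the argument is a direct bookkeeping exercise once one observes that \hyperref[wallalcove]{Remark~\ref*{wallalcove}}, applied to \emph{both} endpoints of the reflection, furnishes the inequalities that $x_0$ sits strictly between $x_{j-1}$ and $x_j$, and that $y_0$ sits strictly between $y_j$ and $y_{j+1}$. The only mild care needed is the boundary case $j=1$, where the would-be inequality $x_{j-1}>x_0$ does not apply and one only needs $x_0 > x_1$, which is the content of $x_0 > x_j$ for $j=1$.
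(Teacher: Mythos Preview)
Your proof is correct and follows essentially the same approach as the paper's: both arguments write $x=\varphi_n(\lambda,k)$, $y=\varphi_n(\mu,k)$, extract the inequality $x_0>x_j$ (equivalently $y_j>y_0$) from $|\lambda|<|\mu|$, and then read off the alcove conditions by combining this with the strict chains $x_1>x_2>\cdots$ and $y_1>y_2>\cdots$ from \hyperref[wallalcove]{Remark~\ref*{wallalcove}} applied to both endpoints. The only cosmetic difference is that you obtain the key inequality via the explicit formula $x_0=n-|\lambda|>n-|\mu|=y_0$, whereas the paper obtains the equivalent $x_j<y_j$ by noting that the coordinates with index $\geq 1$ agree except at $j$; since $x_0=y_j$ and $x_j=y_0$, these are the same observation.
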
 

\begin{proof}
Write $x=\varphi_n(\lambda,k)$ and $y=\varphi_n(\mu,k)$. We have
$x=(x_0,x_1,x_2,\ldots, x_{j-1} , x_j,x_{j+1},  \ldots)$
and
$y=(x_j, x_1, x_2, \ldots, x_{j-1}, x_0, x_{j+1}, \ldots).$
By assumption, $|\lambda|<|\mu|$ and $x_i=y_i$ for all $i\geq 1$ except for $i=j$; therefore  we must have $x_j<y_j$.
Thus we have
$$x_{j-1}>x_0=y_j> x_j$$
and $x$ is in the $j$-th alcove. And we also have
$$y_j=x_0 > y_0=x_j > y_{j+1}=x_{j+1}$$
and $y$ is in the $(j+1)$-th alcove.
\end{proof}

\begin{proposition}\label{edgealcove}
Let $x,y$ be vertices in $\varphi_n(\mathcal{Y})$. Assume that $x$ is in the $j$-th alcove and that $y\rightarrow x$ is an edge in the graph $\mathcal{Z}$. Then we are in precisely one of the following cases:
\begin{enumerate}
\item $y$ is in the $j$-th alcove.
\item $y$ is on the $j$-th wall and either $y=x+\varepsilon_j$ or $y=x-\varepsilon_0$.
\item $y$ is on the $(j-1)$-th wall and either $y=x+\varepsilon_0$ or $y=x-\varepsilon_{j-1}$.
\end{enumerate}
\end{proposition}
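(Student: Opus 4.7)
My plan is a direct case analysis based on the definition of the edges in $\mathcal{Z}$: since $y\to x$ is an edge, $y = x\pm\varepsilon_i$ for some $i\geq 0$, the sign being determined by the parity of the level of $y$. Writing $x = (x_0,x_1,x_2,\ldots)$, the hypothesis that $x$ is in the $j$-th alcove unpacks as
\[
 x_1 > x_2 > \cdots > x_{j-1} > x_0 > x_j > x_{j+1} > \cdots
\]
(where the left portion is empty when $j=1$), and I will use \cref{wallalcove} to locate $y$ once the relevant coordinate has been perturbed.

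First I would handle the ``generic'' indices. If $i\geq 1$ and $i\notin\{j-1,\,j\}$, then $y_0$, $y_{j-1}$ and $y_j$ all agree with $x_0$, $x_{j-1}$ and $x_j$, so the alcove-$j$ inequalities for $x$ are inherited by $y$ and case 1 holds. The same conclusion is immediate for the ``outward'' perturbations $y = x+\varepsilon_{j-1}$ (when $j\geq 2$) and $y = x-\varepsilon_j$, which respectively raise $y_{j-1}$ further above $y_0$ or lower $y_j$ further below $y_0$, preserving the alcove-$j$ position.

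The interesting subcases are the remaining four, which are precisely those where the change can bring $y_0$ level with a neighbour. For $y = x+\varepsilon_0$ (assuming $j\geq 2$) one has $y_0 = x_0+1$ and $y_{j-1} = x_{j-1}$; since $x_{j-1}\geq x_0+1$, equality is attained exactly when $x_{j-1} = x_0+1$, placing $y$ on the $(j-1)$-th wall and giving the first option of case 3. The symmetric choice $y = x-\varepsilon_{j-1}$ produces the second option of case 3 under the same triggering condition. A parallel treatment of $y = x-\varepsilon_0$ and $y = x+\varepsilon_j$ yields case 2, each triggered exactly when $x_0 = x_j+1$. When $j=1$ the $(j-1)$-th wall does not exist, and the same analysis for $y = x+\varepsilon_0$ then yields only vertices of the 1st alcove, consistent with the absence of case 3.

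Finally, the ``precisely one of'' assertion follows from the lemma preceding the proposition: every vertex of $\varphi_n(\mathcal{Y})$ lies in a unique alcove or on a unique wall, so cases 1--3 are mutually exclusive. The only real obstacle is bookkeeping the relevant $(i,\,\text{sign})$ pairs so that every $y$ with $y\to x$ is accounted for; no genuinely novel idea is needed beyond \cref{wallalcove} and the description of the edges of $\mathcal{Z}$.
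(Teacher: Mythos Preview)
Your proof is correct and follows essentially the same approach as the paper's: both arguments reduce to a case analysis on the perturbation $y = x \pm \varepsilon_i$ and use \cref{wallalcove} to ensure the outer chain $y_1 > y_2 > \cdots$ holds automatically. The only cosmetic difference is that the paper first establishes the single inequality chain $y_1 > \cdots > y_{j-1} \geq y_0 \geq y_j > y_{j+1} > \cdots$ in one stroke and then reads off which perturbations force equality at either end, whereas you run through the $(i,\text{sign})$ pairs individually; the content is identical.
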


\begin{proof}
As $y\rightarrow x$ is an edge in $\mathcal{Z}$ we have that $y=x\pm \varepsilon_k$ for some $k\geq 0$.
As $x$ is in the $j$-th alcove we have
$$x_1>x_2>\ldots >x_{j-1} > x_0 > x_j > x_{j+1} > \ldots$$
and so $y$ satisfies
$$y_1>y_2>\ldots >y_{j-1} \geq y_0 \geq y_j >y_{j+1}> \ldots$$
This implies that either $y$ is in the $j$-th alcove, or $y_{j-1}=y_0$ (that is, $y$ is on the $(j-1)$-wall), or $y_j=y_0$ (that is, $y$ is on the $j$-th wall). Now we have $y_{j-1}=y_0$ precisely when either $y=x+\varepsilon_0$ or $y=x-\varepsilon_{j-1}$. Similarly we have $y_j=y_0$ precisely when either $y=x-\varepsilon_0$ or $y=x+\varepsilon_j$.
\end{proof}

\begin{lemma}\label{reflectionystuff}
Let   $\stt =(\stt(0), \stt(1), \ldots , \stt(k))\in \Std_k $ and let
$\varphi_n(\stt) =(x^{(0)}, x^{(1)}, \ldots , x^{(k)} )$
be its image in $\mathcal{Z}$.
Then the $n$-residue vector $r_n(\stt)$ is given by
$$r_{n, \stt }(i) = \left\{ \begin{array}{ll} x^{(i)}_j & \mbox{if $x^{(i)}=x^{(i-1)}+\varepsilon_j$ for some $j\geq 0$,}\\
n-1 -x^{(i)}_j & \mbox{if $x^{(i)}=x^{(i-1)}-\varepsilon_j$ for some $j\geq 0$.}\end{array}\right.$$
\end{lemma}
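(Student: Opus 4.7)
The plan is to prove the lemma by direct case analysis, matching the four cases appearing in Proposition~\ref{u-t-c} with the two cases in the statement. Specifically, the parity of $i$ together with whether $\stt(i) = \stt(i-1)$ or $\stt(i)$ differs from $\stt(i-1)$ by a single box in some row $j \geq 1$ give four possibilities; I will show that the two options with $\stt(i) = \stt(i-1)$ correspond to an $\varepsilon_0$-step (with sign determined by the parity of $i$), while the two options of adding or removing a box in row $j$ correspond to an $\varepsilon_j$-step.

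First I would write out the coordinates of $x^{(i)}$ explicitly from the definition of $\varphi_n$: for every $i$ and every $j \geq 1$ one has $x^{(i)}_j = \stt(i)_j - j$, whereas $x^{(i)}_0 = n - |\stt(i)|$ when $i$ is even and $x^{(i)}_0 = n - 1 - |\stt(i)|$ when $i$ is odd. A short calculation in each of the four cases then identifies the edge $x^{(i-1)} \to x^{(i)}$ in $\mathcal{Z}$: in the two cases where the partition is unchanged, the coordinates with $j \geq 1$ are preserved and only $x_0$ shifts by $\pm 1$ (because of the alternating convention in $\varphi_n$), giving a step of $\pm \varepsilon_0$; in the cases where a box $a$ is added to or removed from row $j$, the coordinate $x_0$ is preserved (the change in $|\stt|$ cancels against the parity shift) while $x_j$ shifts by $\pm 1$, giving a step of $\pm \varepsilon_j$.

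Once the edge has been identified, it remains to evaluate the right-hand side of the claimed formula and compare with $\pi_n(c_\stt(i))$ from Proposition~\ref{u-t-c}. For instance, when $i$ is even and $\stt(i) = \stt(i-1) \cup \{a\}$ with $a$ in row $j$, one has $x^{(i)}_j = \stt(i)_j - j = c(a)$, matching $c_\stt(i) = c(a)$; when $i$ is odd and $\stt(i) = \stt(i-1) \setminus \{a\}$, one has $n - 1 - x^{(i)}_j = n - 1 - (c(a) - 1) = n - c(a)$, matching $\pi_n(z - c(a))$. The two ``no change'' cases are handled by the same direct computation and reduce to $n - |\stt(i)|$ (even $i$) and $|\stt(i)|$ (odd $i$) respectively. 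There is no serious obstacle here; the argument is essentially bookkeeping, and the only point requiring attention is tracking the two parity-dependent formulas for $x_0$, but this alternation is precisely the feature that makes the identification of the edge type transparent.
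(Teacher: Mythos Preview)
Your proposal is correct and is essentially the same approach as the paper's proof, which simply states that the result follows directly from the definition of $\varphi_n$ and Definition~\ref{residueeee}(1). You have merely spelled out the four-case verification that the paper leaves implicit; your identification of which $\varepsilon_j$-step occurs in each case, and the subsequent comparison with $\pi_n(c_\stt(i))$ from Proposition~\ref{u-t-c}, are exactly the bookkeeping the one-line proof is pointing to.
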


\begin{proof}
This follows directly from the definition of the embedding $\varphi_n$ given in (\ref{embedding}) and Definition \ref{residueeee}(1).

\end{proof}

 \begin{proposition}\label{intervals}
Let $\sts,\stt\in \Std_k$. Then we have $\sts \approx_n \stt $ if and only if $\sts $ and $\stt$ agree everywhere except possibly on a finite number of  intervals
$[a,b]$ for $0 < a < b  \leq   k$
 and for
  each such interval   $[a,b]$ there exists some $j\geq 1$ satisfying the following conditions.
\begin{enumerate}[label=(\arabic{*}), ref=\arabic{*},leftmargin=0pt,itemindent=1.5em]
\item  We have that $\stt (a)=\sts (a)$ and $\varphi_n(\stt (a))$ is on the $j$-th wall. Moreover,  if $b\neq k$ then we also have that $\stt (b)=\sts (b)$   and $\varphi_n(\stt(b))$  is on the $j$-th wall.
\item Either all $\varphi_n(\stt(i))$ for $a<i<b$ are in the $j$-th alcove and all $\varphi_n(\sts (i))$ for $a<i<b$ are in the $(j+1)$-th alcove, or vice versa.
\item  $\varphi_n(\stt (i))=s_{0,j}(\varphi_n(  \sts (i)) $ for all $a<i<b$.
\end{enumerate}
\end{proposition}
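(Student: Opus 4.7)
The plan is to prove the two implications separately. The reverse implication reduces to a direct verification using \hyperref[reflectionystuff]{Lemma~\ref*{reflectionystuff}}, while the forward implication (the substantial one) I will prove by induction on $k$.

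For the reverse direction, assume $\sts$ and $\stt$ agree outside intervals of the prescribed form. Outside the intervals the two residue vectors agree entry-by-entry. Within an interval with associated index $j$, the identity $\varphi_n(\sts(i)) = s_{0,j}(\varphi_n(\stt(i)))$ combined with the fact that $s_{0,j}$ swaps the basis vectors $\varepsilon_0$ and $\varepsilon_j$ (and fixes the others) implies that if the $k_1$-th coordinate of $\varphi_n(\stt)$ is modified at step $i$ then the $s_{0,j}(k_1)$-th coordinate of $\varphi_n(\sts)$ is modified at step $i$. By \hyperref[reflectionystuff]{Lemma~\ref*{reflectionystuff}} the residues read off at step $i$ are the values of these modified coordinates (or their $n-1$ minus equivalents in the subtract case), and these agree because $(s_{0,j}(v))_{s_{0,j}(\ell)} = v_\ell$. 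A short check at the steps entering the interval at $a$ (and, when $b<k$, leaving at $b$) uses the wall condition $x_0 = x_j$ at the endpoints.

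For the forward direction, I induct on $k$, the case $k=0$ being vacuous. Given $\sts \approx_n \stt$ in $\Std_k$, the truncations $\sts\downarrow_{k-1}$ and $\stt\downarrow_{k-1}$ satisfy $\sts\downarrow_{k-1} \approx_n \stt\downarrow_{k-1}$, so by induction their disagreement set decomposes into intervals $[a_i,b_i]$ with $b_i\leq k-1$, those with $b_i=k-1$ being "open" (no wall condition imposed at $b_i$). The task is to track how the $k$-th step opens a new interval, extends an open one, or closes an open one. The central local computation is this: if $\sts(k-1)=\stt(k-1)=\mu$ but $\sts(k)\neq\stt(k)$, then residue equality via \hyperref[reflectionystuff]{Lemma~\ref*{reflectionystuff}} combined with the strict monotonicity $x_1>x_2>\ldots$ on $\varphi_n(\mathcal{Y})$ (\hyperref[wallalcove]{Remark~\ref*{wallalcove}}) forces one side to make an $\varepsilon_0$-move and the other an $\varepsilon_j$-move for some $j\geq 1$ with $\varphi_n(\mu)$ on the $j$-th wall; any other pair of change-indices would force two equal entries of $\varphi_n(\mu)$ among indices $\geq 1$, contradicting monotonicity. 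A direct computation then places $\varphi_n(\stt(k))$ and $\varphi_n(\sts(k))$ in the $j$-th and $(j+1)$-th alcoves respectively, related by $s_{0,j}$, with the strict separations forced by validity of the two partitions $\sts(k),\stt(k)$ (addability of a box in row $j$ is exactly the inequality $\varphi_n(\mu)_{j-1}\geq\varphi_n(\mu)_0+2$).

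The extension/closure step is analogous: starting from $\varphi_n(\stt(k-1))$ in the $j$-th alcove and $\varphi_n(\sts(k-1)) = s_{0,j}(\varphi_n(\stt(k-1)))$ in the $(j+1)$-th alcove, the same residue-equality-plus-monotonicity argument constrains the $k$-th moves so that $\varphi_n(\sts(k)) = s_{0,j}(\varphi_n(\stt(k)))$; case analysis on which coordinate is modified shows that either both images stay in their respective alcoves (the interval extends) or both land at the same point of the $j$-th wall (the interval closes with the required wall condition at $b=k$). The main obstacle I anticipate is the bookkeeping to rule out spurious exits onto the $(j-1)$-th or $(j+2)$-th walls: such an exit on one side would, via the relation $y=s_{0,j}(x)$, force two consecutive equal entries in the image on the other side, contradicting membership in $\varphi_n(\mathcal{Y})$, and this exclusion must be organized cleanly for both parities of $k$ while accounting for the asymmetric role of the $0$-th coordinate.
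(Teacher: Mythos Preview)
Your approach is correct and essentially coincides with the paper's: the paper reduces the forward direction to two local statements (I) and (II)---exactly your ``opening'' and ``extension/closure'' computations---rather than phrasing it as an induction on $k$, but the content is identical. One simplification worth noting: the paper observes that condition (2) follows immediately from condition (3) via Lemma~\ref{reflectionimpliesadjacent}, so your anticipated bookkeeping about spurious exits onto the $(j{-}1)$-th or $(j{+}2)$-th walls is unnecessary.
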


\begin{proof}
We write $$\varphi_n(\stt) =(x^{(0)}, x^{(1)}, x^{(2)}, \ldots, x^{(k)} ) \quad \varphi_n(\sts)  = (y^{(0)}, y^{(1)}, y^{(2)}, \ldots, y^{(k)}).$$  
Note that (2) follows from (3) using Lemma \ref{reflectionimpliesadjacent}. It is enough to show that 
  \begin{enumerate}[label=(\arabic{*}), ref=\arabic{*},leftmargin=0pt,itemindent=1.5em]
\item[(I)] if $x^{(i)} = y^{(i)}$ and $x^{(i+1)}\neq y^{(i+1)}$ then $x^{(i)}$ is on
 the $j$-th wall for some $j\geq 1$ and $y^{(i+1)} = s_{0,w}(x^{(i+1)})$; 
\item[(II)] if $x^{(i)}\neq y^{(i)}$ and $y^{(i)}=s_{0,j}  (x^{(i)})$ then $y^{(i+1)}=s_{0,j} (x^{(i+1)})$.
\end{enumerate} 
First we fix some notation.   For  $0\leq i \leq k$, we let $u_i$ (respectively  $v_i$)  denote the row  in which 
 the coordinates of $\varphi_n(\stt(i))$ and $\varphi_n(\stt(i-1))$      (respectively of 
 $\varphi_n(\sts(i))$ and $\varphi_n(\sts(i-1))$) differ, in other words
 $$x^{(i)} = x^{(i-1)}\pm \varepsilon_{u_i} \quad y^{(i)} = y^{(i-1)}\pm \varepsilon_{v_i}.$$
 It follows   from Lemma \ref{reflectionystuff} that 
$\stt \approx_n\sts $ if and only if 
 $$x^{(i)}_{u_i} = y^{(i)}_{v_i}$$  for all $1\leq i \leq k$.   
   We start by proving  (I). Assume that $x^{(i)}=y^{(i)}$ and $x^{(i+1)}\neq y^{(i+1)}$
   and so $u_i\neq v_i$.  
     As $\stt \approx_n\sts $ we must have
$$x^{(i+1)}_{u_i} = x^{(i)}_{u_i} \pm 1 = y^{(i+1)}_{v_i} = x^{(i)}_{v_i}\pm 1$$
and so $x^{(i)}_{u_i} = x^{(i)}_{v_i}$. Thus we must have that one of $u_i$ or $v_i$ is equal to $0$. Assume without loss of generality that ${v_i}=0$. Then we have
$$x^{(i)}_{u_i}=x^{(i)}_0$$
and so $x^{(i)}$ is on the $u_i$-th wall. To simplify the notation we set $u=u_i$. We have
$$y^{(i+1)}_m = x^{(i+1)}_m$$
for all $m$ except $m=0$ and $m=u$ and we have
$$y^{(i+1)}_0 = x^{(i+1)}_u.$$
We also have 
$$y_u^{(i+1)}=x^{(i)}_u = x^{(i)}_0 = x^{(i+1)}_0.$$
Thus we have
$$y^{(i+1)} = s_{0,u}(x^{(i+1)})$$
as required.   

 Now we turn to (II). Assume that $x^{(i)}\neq y^{(i)}$ and $y^{(i)}=s_{0,j}(x^{(i)})$. 
As $\stt \approx_n \sts$ we must have $x_{u_i}^{(i+1)} = y^{(i+1)}_{v_i}$ and so $x_{u_i}^{(i)} = y_{v_i}^{(i)}$. As $y^{(i)} = s_{0,j}(x^{(i)})$ we must have that one of $u_i$ or $v_i$ is equal to $0$ and the other is equal to $j$. Without loss of generality we assume that $u_i=0$ and $v_i=j$. Then we have
$$x^{(i+1)} = x^{(i)} \pm \varepsilon_0 \quad\quad y^{(i+1)} = y^{(i)} \pm \varepsilon_j.$$
Thus we have
$$x^{(i+1)} = (x_0^{(i)} \pm 1 , x_1^{(i)}, x_2^{(i)}, \ldots , x_j^{(i)}, \ldots ) \quad\quad y^{(i+1)} = (x_j^{(i)}, x_1^{(i)}, x_2^{(i)}, \ldots , x_0^{(i)}\pm 1, \ldots ).$$
So we have $y^{(i+1)} = s_{0,j}(x^{(i+1)})$ as required.
\end{proof}

\begin{example}
Let $k=6$ and $n=2$, the subgraph of the first 6 levels of $\varphi_2(\mathcal{Y})$ intersected with $\ZZ\{\varepsilon_0,\varepsilon_1,\varepsilon_2\}$ is depicted in \hyperref[piccy]{Figure~\ref*{piccy}} below.  The dashed lines on the diagram depict the $1$-st and $2$-nd walls; the vertices are drawn in such a manner that the points obtained by reflection through a wall matches the points one obtains by reflecting in the dashed lines of the diagram.  
 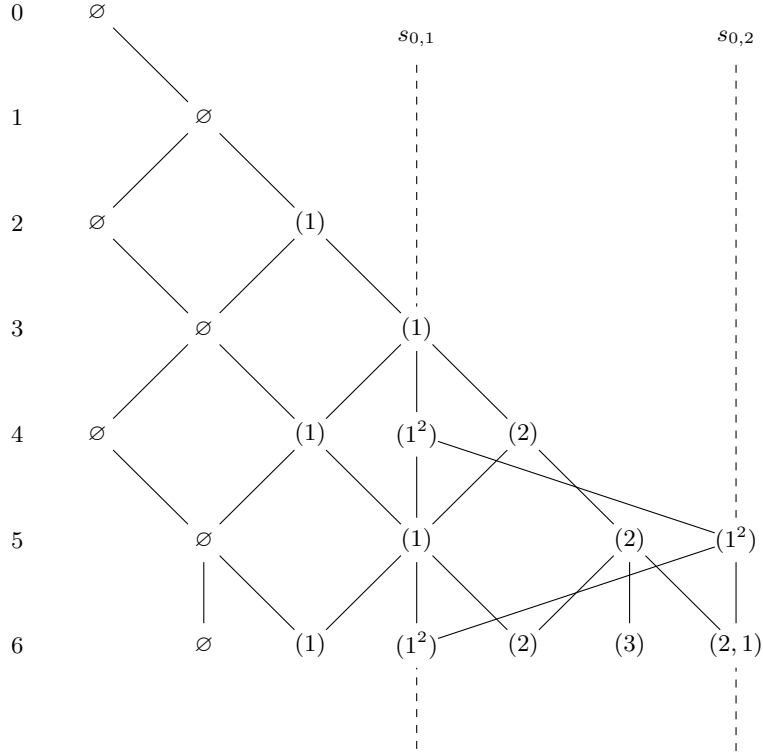
\begin{figure}[ht!]
$$  \scalefont{0.8}
\begin{tikzpicture}[scale=0.7]
          \begin{scope}  
             \draw (-3.5,2) node { 0 };  
 \draw (-3.5,0) node { 1 };  
 \draw (-3.5,-2) node { 2};  
 \draw (-3.5,-4) node { 3 };  
 \draw (-3.5,-6) node { 4 };  
 \draw (-3.5,-8) node { 5 };  
 \draw (-3.5,-10) node { 6 };  
             \draw[dashed](4,1)--(4,-4);
                          \draw[dashed](4,-10)--(4,-12);
                        \draw[dashed](10,1)--(10,-8);
                                                \draw[dashed](10,-12)--(10,-10);
               \draw (4,1.5) node { $s_{0,1}$ };            
    \draw (10,1.5) node { $s_{0,2}$ };            
          \draw(-2,2)--(0,0)--(-2,-2) --(0,-4)--(-2,-6)--(0,-8);
                    \draw (0,0)--(2,-2) --(0,-4)--(2,-6)--(0,-8);
 \draw (2,-2) --(8,-8);
  \draw (4,-4) --(2,-6)--(4,-8)--(4,-4);
    \draw (4,-6)--(10,-8);  \draw (6,-6)--(4,-8);
    \draw (0,-10)--(0,-8);
        \draw (2,-10)--(0,-8);
            \draw (2,-10)--(4,-8);
                \draw (4,-10)--(4,-8)--(6,-10);
                    \draw (4,-10)--(10,-8);                 
  \draw (6,-10)--(8,-8)--(8,-10);
     \draw (10,-8)--(10,-10)--(8,-8);
\fill[white] (-2,2) circle (12pt);   
              \fill[white] (0,0) circle (12pt);
    \fill[white] (-2,-2) circle (12pt);
        \fill[white] (+2,-2) circle (12pt);
      \fill[white] (0,-4)circle (12pt);
     \fill[white] (4,-4) circle (12pt);
      \fill[white] (+-2,-6) circle (12pt);
          \fill[white] (2,-6) circle (12pt);
             \fill[white] (+6,-6) circle (12pt);
                          \fill[white] (4,-6) circle (12pt);
                           \fill[white] (+-2,-10) circle (12pt);
          \fill[white] (2,-10) circle (12pt);
             \fill[white] (+6,-10) circle (12pt);
                          \fill[white] (4,-10) circle (12pt);
    \fill[white] (0,-8) circle (12pt);
          \fill[white] (4,-8) circle (12pt);
             \fill[white] (+8,-8) circle (12pt);
             \fill[white] (0,-10) circle (12pt);
\fill[white](10,-10)  circle (12pt);
                          \fill[white] (10,-8) circle (12pt);
 \fill[white] (8,-10) circle (12pt);         
           \draw (-2,2) node { $  \varnothing $  };   
              \draw (0,0) node {   $ \varnothing $  };   
    \draw (-2,-2) node   {   \text{	$ \varnothing  $	}}		;
        \draw (+2,-2) node   {   $ (1) $	    }		;
      \draw (0,-4) node   {   \text{	$ \varnothing$	}};
     \draw (4,-4) node   {  $ (1) $	 };
      \draw (+-2,-6) node   {   	  $ \varnothing $  };                 
          \draw (2,-6) node   {  $ {(1)} $	 }		;
             \draw (+6,-6) node              {    $  {(2)}  $	 	 }		;
                          \draw (4,-6) node            {    $ {(1^2)} $	 	 }			;
    \draw (0,-8) node   {   	  $ \varnothing $  };                 
          \draw (4,-8) node   {  $ {(1)} $	 }		;
             \draw (+8,-8) node              {    $  {(2)}  $	 	 }		;
                          \draw (10,-8) node            {    $ {(1^2)} $	 	 }			;        
                               \draw (0,-10) node   {   	  $ \varnothing $  };                 
          \draw (2,-10) node   {  $ {(1)} $	 }		;
             \draw (+6,-10) node              {    $  {(2)}  $	 	 }		;
                          \draw (4,-10) node            {    $ {(1^2)} $	 	 }			;                                        
                          \draw (8,-10) node            {    $ {(3)} $	 	 }			;                
\draw (10,-10) node            {    $ {(2,1)} $	 	 }			;                
    \end{scope}\end{tikzpicture}$$
    \caption{For $k=6$ and $n=2$, the subgraph of the first 6 levels of $\varphi_2(\mathcal{Y})$ intersected with $\ZZ\{\varepsilon_0,\varepsilon_1,\varepsilon_2\}$.}
    \label{piccy}
\end{figure}
\end{example}

It follows from Proposition \ref{intervals}  that if $(\lambda, k)\sim_n (\mu,k)$ with $\lambda \neq \mu$ then $\varphi_n(\lambda,k)$ must be in an alcove, say that $j_1$-th alcove and $\varphi_n(\mu,k)$ is in the $j_2$-th alcove say for some $j_1, j_2\geq 1$. We can assume that $j_2>j_1$. Moreover we must have 
$$\varphi_n(\mu,k) = s_{0,j_2-1}\ldots s_{0,j_1+1}s_{0,j_1}(\varphi_n(\lambda,k)).$$
But then using Theorem \ref{cfcells} we know that $\Delta_{k,\mathbb{Q}}^n(\lambda)$ and $\Delta_{k,\mathbb{Q}}^n(\mu)$ belong to the same block. Thus we can strengthen Proposition \ref{mathasnecblock} as follows.

\begin{theorem}\label{blocks} Let $(\lambda, k), (\mu,k)\in \mathcal{Y}_k$. Then we have that $\Delta_{k,\mathbb{Q}}^n(\lambda)$ and $\Delta_{k,\mathbb{Q}}^n(\mu)$ belong to the same block if and only if $(\lambda,k)\sim_n (\mu,k)$. In particular, we have that 
$$\{G_{(\lambda,k)} \, : \, (\lambda,k)\in \mathcal{Y}_k / \sim_n\}$$
form a complete set of primitive pairwise orthogonal central idempotents in $P_k^\mathbb{Q}(n)$.
\end{theorem}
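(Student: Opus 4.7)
The plan is straightforward because the forward implication (``same block $\Rightarrow \sim_n$'') is already Proposition \ref{mathasnecblock}; only the converse needs to be established. Suppose $(\lambda,k)\sim_n(\mu,k)$ with $\lambda\neq\mu$ and pick a linkage chain $\stt_0,\ldots,\stt_r\in\Std_k$ as in Definition \ref{residueeee}(4). Since ``belongs to the same block'' is an equivalence relation on $\mathcal{Y}_k$, I would reduce to the case $r=1$: two paths $\stt_0\approx_n\stt_1$ with $\stt_0\in\Std_k(\lambda)$, $\stt_1\in\Std_k(\mu)$, and then iterate.

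Next I would apply Proposition \ref{intervals} to this pair. The two paths agree outside finitely many disjoint intervals $[a,b]\subseteq[0,k]$, and only an interval with $b=k$ can cause the endpoints $\lambda$ and $\mu$ to differ. On such a terminal interval there is a single index $w\ge 1$ so that the images under $\varphi_n$ of the two paths sit in the $w$-th and $(w+1)$-th alcoves and are interchanged by the reflection $s_{0,w}$. Lemma \ref{reflectionimpliesadjacent} then translates this into the statement that, up to reordering, $(\lambda,\mu)$ forms an $n$-pair when $k$ is even, or an $(n-1)$-pair when $k$ is odd. Theorem \ref{cfcells} now immediately places $\Delta_{k,\mathbb{Q}}^n(\lambda)$ and $\Delta_{k,\mathbb{Q}}^n(\mu)$ in a common block-chain, hence in the same block of $P_k^\mathbb{Q}(n)$.

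For the ``In particular'' clause, I would combine the newly established equivalence with Proposition \ref{mathasnecblock}: the latter already furnishes a complete set of pairwise orthogonal central idempotents $\{G_{(\lambda,k)}\}$ indexed by $\mathcal{Y}_k/\sim_n$, and the indexing set is now in bijection with the blocks of $P_k^\mathbb{Q}(n)$. A central idempotent that acts as the identity on every cell module belonging to one block and as zero on all other cell modules must be primitive in the centre, so each $G_{(\lambda,k)}$ is primitive as required.

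The main conceptual obstacle would be the geometric translation of the combinatorial relation $\approx_n$ into statements about products of reflections $s_{0,j}$ across walls of the alcove picture strong enough to invoke Theorem \ref{cfcells}; however, Proposition \ref{intervals} has already extracted this information in advance, so the remaining argument is essentially bookkeeping (tracking the change of alcove as $w$ increases through a sequence of terminal-interval reflections) rather than new work.
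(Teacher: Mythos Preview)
Your proposal is correct and follows essentially the same line as the paper: use Proposition~\ref{mathasnecblock} for one direction, and for the converse read off from Proposition~\ref{intervals} that two $\sim_n$-linked endpoints are related by reflections $s_{0,j}$ and then invoke Theorem~\ref{cfcells}. Two minor points: the lemma you want when you say ``translates this into the statement that $(\lambda,\mu)$ forms an $n$-pair'' is Lemma~\ref{npairreflection}, not Lemma~\ref{reflectionimpliesadjacent}; and in fact the detour through $n$-pairs is unnecessary, since Theorem~\ref{cfcells} is already phrased in terms of the reflections $s_{0,j}$, so once Proposition~\ref{intervals} gives $\varphi_n(\mu,k)=s_{0,w}(\varphi_n(\lambda,k))$ you can quote Theorem~\ref{cfcells} directly.
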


\section{Restricting cell and simple modules}

We have seen in Proposition \ref{filtration} that the restriction of any cell module has a filtration by cell modules and that the factors appearing are determined by the branching graph.

In this section, we fix $n\in \mathbb{Z}_{\geq 0}$ and study in more details the restriction of cell modules $\Delta_{k,\mathbb{Q}}^n(\lambda)$ and simple modules $L_{k,\mathbb{Q}}^n(\lambda)$ when $\varphi_n(\lambda,k)$ is in an alcove.

\begin{theorem}\label{restrictioncells}
Let $(\lambda,k)\in \mathcal{Y}_k$ with $\varphi_n(\lambda,k)$ in an alcove. Then we have
$${\rm res}_{k-1}^k \Delta_{k,\mathbb{Q}}^n(\lambda) = \bigoplus_{(\eta,k-1)\rightarrow (\lambda,k)}\Delta_{k,\mathbb{Q}}^n(\lambda) G_{(\eta,k-1)}$$
where the direct sum is taken over all edges $(\eta,k-1)\rightarrow (\lambda,k)$ in $\mathcal{Y}$. Moreover,
$$\Delta_{k,\mathbb{Q}}^n(\lambda) G_{(\eta,k-1)} = \mathbb{Q}{\rm -span} \{ g_\stt \, : \, \stt \in \Std_k(\lambda) \, \mbox{with}\, \, \stt(k-1) = \eta\}$$
and we have an isomorphism
$$\Delta_{k,\mathbb{Q}}^n(\lambda) G_{(\eta,k-1)} \rightarrow \Delta_{k-1, \mathbb{Q}}^n(\eta) \, : \, 
g_\stt \mapsto g_{\stt{\downarrow}_{k-1}}.$$
\end{theorem}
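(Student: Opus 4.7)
The plan is to use the primitive central idempotents $G_{(\eta,k-1)}$ of $P_{k-1}^{\mathbb{Q}}(n)$ from \cref{blocks} to split the restriction as a direct sum, and then identify each summand with the appropriate cell module via the cellular filtration of \cref{filtration}. The alcove hypothesis enters through a geometric step: if $\varphi_n(\lambda,k)$ lies in the $j$-th alcove, then by \cref{edgealcove} each $\varphi_n(\eta,k-1)$ with $(\eta,k-1)\to(\lambda,k)$ lies in the $j$-th alcove or on the $(j-1)$-st or $j$-th wall. Combining \cref{remarkwall} (vertices on walls form linkage-class singletons) with \cref{cfcells} and \cref{reflectionimpliesadjacent} (each linkage class contains at most one vertex per alcove) shows that distinct such $\eta$'s lie in pairwise distinct $n$-linkage classes, so the idempotents $G_{(\eta,k-1)}$ are pairwise orthogonal.

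I would next compute the action $g_\stt\cdot G_{(\eta,k-1)}$. Working over $\mathbb{F}$ with $\tilde f_{\stt,n}=m_\stt F_{[\stt]_n}$, every $\sts\in[\stt]_n$ satisfies $(\sts(k-1),k-1)\sim_n(\stt(k-1),k-1)$, and the geometric step forces $\sts(k-1)=\stt(k-1)$ whenever $\sts\in[\stt]_n^{(\lambda,k)}$. Combining this with Proposition~\ref{s-n-d}(5) applied to $F_{[(\eta,k-1)]_n}\in P_{k-1}^{\mathbb{F}}(z)$ gives $\tilde f_{\stt,n}\,F_{[(\eta,k-1)]_n}=\delta_{\stt(k-1),\eta}\,\tilde f_{\stt,n}$, and specialising at $z=n$ yields $g_\stt\,G_{(\eta,k-1)}=\delta_{\stt(k-1),\eta}\,g_\stt$. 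Since the $g_\stt$'s form a basis by \cref{innerproductg}, the direct sum decomposition and the identification $\Delta^n_{k,\mathbb{Q}}(\lambda)G_{(\eta,k-1)}=\mathbb{Q}\text{-}\mathrm{span}\{g_\stt:\stt(k-1)=\eta\}$ follow at once. For the isomorphism, I would base-change the filtration of \cref{filtration} to $\mathbb{Q}$ to obtain $N^{\unrhd\eta}_\mathbb{Q}/N^{\rhd\eta}_\mathbb{Q}\cong\Delta^n_{k-1,\mathbb{Q}}(\eta)$; since $\Delta^n_{k,\mathbb{Q}}(\lambda)G_{(\eta,k-1)}\subseteq N^{\unrhd\eta}_\mathbb{Q}$ and meets $N^{\rhd\eta}_\mathbb{Q}$ trivially (the other filtration layers lying in distinct blocks $G_{(\eta',k-1)}$), the composite $\Delta^n_{k,\mathbb{Q}}(\lambda)G_{(\eta,k-1)}\hookrightarrow N^{\unrhd\eta}_\mathbb{Q}\twoheadrightarrow\Delta^n_{k-1,\mathbb{Q}}(\eta)$ is an injective $P_{k-1}^\mathbb{Q}(n)$-module map, hence an isomorphism by dimension count.

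The main technical obstacle is to check that this composite sends $g_\stt$ to $g_{\stt{\downarrow}_{k-1}}$ explicitly. The key observation is that $f_\sts=m_\sts F_{\sts{\downarrow}_{k-1}}$ for every $\sts\in\Std_k(\lambda)$, because $\sts$ is uniquely determined by its truncation $\sts{\downarrow}_{k-1}$ once $\sts(k)=\lambda$ is fixed; this yields $[f_\sts]=f_{\sts{\downarrow}_{k-1}}$ in the filtration quotient. Projecting the expansion $m_\stt=\sum_\sts a_{\sts,\stt}f_\sts$ modulo $N^{\rhd\eta}_\mathbb{F}$ and using $[m_\stt]=m_{\stt{\downarrow}_{k-1}}$ from \cref{filtration} yields the coefficient matching $a_{\sts,\stt}=a'_{\sts{\downarrow}_{k-1},\stt{\downarrow}_{k-1}}$ between the $m$-to-$f$ changes of basis in $\Delta^z_\mathbb{F}(\lambda)$ and $\Delta^z_\mathbb{F}(\eta)$. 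Substituting into the $f$-expansion of $\tilde f_{\stt,n}$ and invoking the bijection between $[\stt]_n^{(\lambda,k)}$ (all of whose elements have $(k-1)$-th vertex $\eta$) and $[\stt{\downarrow}_{k-1}]_n^{(\eta,k-1)}$ then identifies $[\tilde f_{\stt,n}]$ with $\tilde f_{\stt{\downarrow}_{k-1},n}$ in $\Delta^z_\mathbb{F}(\eta)$; base change to $\mathbb{Q}$ completes the proof.
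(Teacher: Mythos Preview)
Your proposal is correct and follows the same overall architecture as the paper's proof: split the restriction by the central idempotents $G_{(\eta,k-1)}$, use the alcove hypothesis to see that paths in the same $n$-residue class of $\Std_k(\lambda)$ share the same $(k-1)$-st vertex, and then transport the filtration of \cref{filtration} through the $\mathcal{O}_n$-lattice to identify each summand with $\Delta_{k-1,\mathbb{Q}}^n(\eta)$.

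The differences are in packaging. For the direct-sum step, the paper isolates the idempotent identity $F_{[\stt]_n}F_{[(\eta,k-1)]_n}=\delta_{\stt(k-1),\eta}F_{[\stt]_n}$ as a separate lemma (proved directly from \cref{edgealcove} and \cref{intervals}), whereas you route through the block classification (\cref{blocks}, hence ultimately through Martin's \cref{cfcells}) to conclude that distinct neighbours of $(\lambda,k)$ lie in distinct linkage classes; the paper's route is slightly more self-contained. For the explicit isomorphism $g_\stt\mapsto g_{\stt{\downarrow}_{k-1}}$, the paper's key shortcut is the identity $m_\stt F_{[\stt]_n}=m_\stt F_{[\stt{\downarrow}_{k-1}]_n}$: once this is known, $\tilde f_{\stt,n}$ is visibly $m_\stt$ times an element of $P_{k-1}^{\mathcal{O}_n}(z)$, so the $P_{k-1}$-equivariant filtration map $m_\stt\mapsto m_{\stt{\downarrow}_{k-1}}$ immediately sends it to $m_{\stt{\downarrow}_{k-1}}F_{[\stt{\downarrow}_{k-1}]_n}=\tilde f_{\stt{\downarrow}_{k-1},n}$. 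Your coefficient-matching argument via the $f$-basis (using $f_\sts=m_\sts F_{\sts{\downarrow}_{k-1}}$ and the bijection $[\stt]_n^{(\lambda,k)}\leftrightarrow[\stt{\downarrow}_{k-1}]_n^{(\eta,k-1)}$) is a correct but more laborious unpacking of the same computation.
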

 
In order to prove this theorem we will need the following four lemmas.

\begin{lemma}\label{restrictionlem1}
Let $\stt\in \Std_k$ and $\sts\in \Std_{k-1}$. Then we have
$$F_\stt F_\sts = \left\{ \begin{array}{ll} F_\stt & \mbox{if $\stt{\downarrow}_{k-1} = \sts$}\\ 0 & \mbox{otherwise.}\end{array}\right.$$
\end{lemma}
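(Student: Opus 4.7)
The plan is to deduce the lemma from the fact that the $F_\stu$ for $\stu \in \Std_k$ constitute a complete set of pairwise orthogonal primitive idempotents in the semisimple algebra $P_k^\FF(z)$, and similarly for level $k-1$. The crucial identity I want to establish is
\[
F_\sts \;=\; \sum_{\substack{\stu \in \Std_k \\ \stu{\downarrow}_{k-1}=\sts}} F_\stu \qquad \text{in } P_k^\FF(z),
\]
after which the lemma drops out: multiplying on the left by $F_\stt$ and using $F_\stt F_\stu = \delta_{\stt\stu} F_\stt$ (which follows from \Cref{s-n-d}(5) together with the centrality of the idempotents $F_{(\lambda,k)}$ in \Cref{ssidempotents}, so that $F_\stt F_\stu = 0$ whenever $\stt$ and $\stu$ have different shapes) collapses the sum to $F_\stt$ if $\stt{\downarrow}_{k-1}=\sts$ and to $0$ otherwise.

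To establish the identity, I would compare both sides on the faithful module $\bigoplus_{(\rho,k)\in\mathcal{Y}_k,\,\stv\in\Std_k(\rho)} \FF f_\stv$. Since $F_\sts$ is a polynomial in $L_1,\dots,L_{k-1}$, and since $f_\stv L_i = c_\stv(i) f_\stv$ for all $i\le k$ by \Cref{s-n-d}(4), acting on $f_\stv$ replaces each $L_i$ by the scalar $c_\stv(i)$. In the resulting product of linear fractions, if $\stv{\downarrow}_{k-1} \neq \sts$ then by the separation property over $\FF$ there is some $1\le i\le k-1$ with $c_{\stv{\downarrow}_{k-1}}(i) \neq c_\sts(i)$; the factor indexed by $\stu'=\stv{\downarrow}_{k-1}$ at that $i$ then has numerator $c_\stv(i)-c_{\stv{\downarrow}_{k-1}}(i)=0$, killing the product. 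If, on the other hand, $\stv{\downarrow}_{k-1}=\sts$, then $c_\stv(i)=c_\sts(i)$ for $i\le k-1$, every factor equals $1$, and we get $f_\stv$ back. The right hand side $\sum_{\stu{\downarrow}_{k-1}=\sts} F_\stu$ acts on $f_\stv$ by the same scalar $\delta_{\sts,\stv{\downarrow}_{k-1}}$, using the orthogonality of the $F_\stu$. Since the two elements act identically on a faithful module, they are equal in $P_k^\FF(z)$.

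The main (though mild) obstacle will be keeping track of the ranges of the products defining $F_\sts$ and ensuring the separation condition is applied correctly at level $k-1$: separation is needed to conclude that if the first $k-1$ content-vector entries of $\stv$ agree with those of $\sts$, then $\stv{\downarrow}_{k-1}=\sts$. This is precisely the separation property of the Jucys--Murphy system for $P_{k-1}^\FF(z)$ noted in the discussion preceding \Cref{basis:1}. Once the identity $F_\sts = \sum_{\stu{\downarrow}_{k-1}=\sts} F_\stu$ is in hand, the rest of the argument is a single line, and the lemma follows.
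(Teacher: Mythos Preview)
Your argument is correct, but it takes a somewhat longer route than the paper's own proof. The paper observes directly from \Cref{basis:1} that the factors of $F_\stt$ with $1\le i\le k-1$ already constitute $F_{\stt{\downarrow}_{k-1}}$ (the sets $\{c_\stu(i):\stu\in\Std_k\}$ and $\{c_\stv(i):\stv\in\Std_{k-1}\}$ coincide for $i\le k-1$), so $F_\stt = K\,F_{\stt{\downarrow}_{k-1}}$ for some $K\in P_k^{\FF}(z)$; then orthogonality of the $F_\sts$ in $P_{k-1}^{\FF}(z)$ finishes immediately. You instead establish the refinement $F_\sts=\sum_{\stu{\downarrow}_{k-1}=\sts}F_\stu$ inside $P_k^{\FF}(z)$ by comparing actions on a faithful module, and then apply orthogonality of the $F_\stu$ at level $k$. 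Both approaches are valid; the paper's is a one-line factorisation, while yours yields as a by-product the useful decomposition of $F_\sts$ as a sum of level-$k$ idempotents (equivalent to saying that the inclusion $P_{k-1}^{\FF}(z)\hookrightarrow P_k^{\FF}(z)$ is compatible with the primitive idempotent systems), which is a fact worth having but is more than is needed here.
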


\begin{proof}
It follows from the definition of $F_\stt$ given in Defintion \ref{basis:1} that $F_\stt = KF_{\stt{\downarrow}_{k-1}}$ for some $K\in P_k^\mathbb{F}(z)$. Now the result follows from the fact that the $F_\sts$'s for $\sts\in \Std_{k-1}$ are orthogonal idempotents.
\end{proof}

\begin{lemma}\label{restrictionlem2}
Let $(\lambda,k)\in \mathcal{Y}_k$ with $\varphi_n(\lambda,k)$ in an alcove. 
If $\stt,\sts\in \Std_k(\lambda)$ satisfy \newline $\stt(k-1)\neq \sts(k-1)$ then $\stt \not\approx_n \sts$.
\end{lemma}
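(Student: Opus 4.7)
The plan is to argue by contradiction: assume that $\stt \approx_n \sts$ in spite of $\stt(k-1)\neq \sts(k-1)$, and derive an impossibility from the geometry of the alcoves.

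First, I invoke \cref{intervals} to localise the disagreement between $\stt$ and $\sts$. Since the two paths coincide at level $k$ (both ending at $(\lambda,k)$) but differ at level $k-1$, one of the intervals of disagreement $[a,b]$ produced by \cref{intervals} must contain $k-1$ strictly in its interior. Together with $b\leq k$, this forces $b=k$ and $a<k-1$. (Here I use that condition (1) of \cref{intervals} requires $\stt(b)=\sts(b)$ whenever $b\neq k$, which would push $k-1$ out of the interior.) On this interval, condition (2) of \cref{intervals} tells me that $\varphi_n(\stt(k-1))$ and $\varphi_n(\sts(k-1))$ lie in two different alcoves; after relabelling I may assume these are the $j$-th and $(j+1)$-th alcoves respectively.

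Second, I transport this alcove information up to level $k$ by applying \cref{edgealcove} to the edge $\stt(k-1)\to \stt(k)=(\lambda,k)$. By hypothesis $\varphi_n(\lambda,k)$ lies in some alcove, call it the $j'$-th. \Cref{edgealcove}, applied with $x=\varphi_n(\lambda,k)$ and $y=\varphi_n(\stt(k-1))$, says that $y$ must be either in the $j'$-th alcove or on the $j'$-th or $(j'-1)$-th wall. Since $\varphi_n(\stt(k-1))$ is already known to sit in the $j$-th alcove (and therefore not on any wall), the only possibility is $j'=j$. The identical reasoning applied to the edge $\sts(k-1)\to (\lambda,k)$ yields $j'=j+1$. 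Since an alcove containing a given vertex is unique (the lemma after \cref{wallalcove}), the equalities $j'=j$ and $j'=j+1$ contradict each other.

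I expect the only real subtlety to be the bookkeeping at step one: making sure that the interval of disagreement containing $k-1$ indeed has right endpoint equal to $k$ and not some smaller $b$, so that the ``in different alcoves'' conclusion from condition (2) of \cref{intervals} genuinely applies at the index $k-1$. Once that is in place, the contradiction is a direct one-line consequence of \cref{edgealcove} applied twice together with the uniqueness of the alcove containing $\varphi_n(\lambda,k)$.
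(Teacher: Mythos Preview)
Your proof is correct and follows exactly the route the paper indicates: the paper's proof simply reads ``This follows directly from Propositions~\ref{edgealcove} and~\ref{intervals}'', and you have faithfully unpacked those two citations. The bookkeeping you flag (forcing $b=k$ so that condition~(2) of \cref{intervals} applies at level $k-1$) is precisely the point that needs to be checked, and your argument handles it correctly.
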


\begin{proof}
This follows directly from Propositions \ref{edgealcove} and \ref{intervals}.
\end{proof}

\begin{lemma}\label{restrictionlem3}
Let $(\lambda,k)\in \mathcal{Y}_k$ with $\varphi_n(\lambda,k)$ in an alcove. Let $(\eta,k-1)\rightarrow (\lambda,k)$ be an edge in $\mathcal{Y}$ and let $\stt\in \Std_k(\lambda)$. Then we have
$$F_{[\stt]_n}F_{[(\eta,k-1)]_n} = \left\{ \begin{array}{ll} F_{[\stt]_n} & \mbox{if $\stt(k-1)=\eta$}\\ 0 & \mbox{otherwise.}\end{array}\right.$$
\end{lemma}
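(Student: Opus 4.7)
The plan is to expand the product $F_{[\stt]_n}F_{[(\eta,k-1)]_n}$ using the definitions as sums over the relevant residue classes and then invoke the orthogonality result from Lemma \ref{restrictionlem1}. That lemma tells us that $F_\sts F_\stu$ for $\sts\in\Std_k$ and $\stu\in\Std_{k-1}$ is $F_\sts$ when $\sts{\downarrow}_{k-1}=\stu$ and $0$ otherwise, so the double sum collapses to
\[F_{[\stt]_n}F_{[(\eta,k-1)]_n} = \sum_{\sts \in [\stt]_n,\;\sts{\downarrow}_{k-1}\in [(\eta,k-1)]_n} F_\sts.\]
The first simplification I would make is to note that $\sts \approx_n \stt$ implies $\sts{\downarrow}_{k-1}\approx_n \stt{\downarrow}_{k-1}$, since restricting residue vectors to the first $k-1$ entries preserves equality. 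Consequently the condition $\sts{\downarrow}_{k-1}\in [(\eta,k-1)]_n$ depends only on $\stt$, not on the chosen $\sts\in[\stt]_n$, and the product is either $F_{[\stt]_n}$ or $0$ according as $\stt{\downarrow}_{k-1}\in [(\eta,k-1)]_n$ or not. The lemma thus reduces to proving the equivalence $\stt{\downarrow}_{k-1}\in [(\eta,k-1)]_n \iff \stt(k-1)=\eta$.

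The backward direction is immediate, as $\stt(k-1)=\eta$ places $\stt{\downarrow}_{k-1}\in \Std_{k-1}(\eta)\subseteq [(\eta,k-1)]_n$. For the forward direction I would argue by contradiction: write $\eta' = \stt(k-1)$ and suppose $\stt{\downarrow}_{k-1}\approx_n \stu$ for some $\stu \in \Std_{k-1}(\eta)$ with $\eta \neq \eta'$. Applying Proposition \ref{intervals} to the pair $(\stt{\downarrow}_{k-1},\stu)$, the two paths disagree at the final position $k-1$, so the last interval of disagreement must end at $k-1$. By that proposition, this forces $\varphi_n(\eta')$ and $\varphi_n(\eta)$ to lie in two adjacent alcoves, say the $j$-th and $(j+1)$-th respectively; in particular neither vertex lies on any wall.

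The main obstacle --- and the point at which the alcove hypothesis on $\lambda$ is decisive --- is then dispatched using Proposition \ref{edgealcove}. Let $i$ denote the index of the alcove containing $\varphi_n(\lambda,k)$. The two edges $(\eta,k-1)\to(\lambda,k)$ and $(\eta',k-1)\to(\lambda,k)$ in $\mathcal{Y}$ force each of $\varphi_n(\eta,k-1)$ and $\varphi_n(\eta',k-1)$ to lie either in the $i$-th alcove or on the $(i-1)$-th or $i$-th wall; since neither can sit on a wall, both must sit in the $i$-th alcove. This yields the simultaneous contradictions $i=j$ and $i=j+1$, so $\eta=\eta'$, completing the argument.
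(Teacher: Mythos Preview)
Your argument is correct and is essentially the paper's approach unpacked: the paper's one-line proof cites Lemmas~\ref{restrictionlem1} and~\ref{restrictionlem2}, and since the proof of Lemma~\ref{restrictionlem2} is itself a direct appeal to Propositions~\ref{edgealcove} and~\ref{intervals}, your use of those two propositions amounts to inlining that step rather than quoting it.

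One small technical point is worth tightening. When you apply Proposition~\ref{intervals} to the pair $(\stt{\downarrow}_{k-1},\stu)$ and conclude that $\varphi_n(\eta',k-1)$ and $\varphi_n(\eta,k-1)$ lie in two adjacent alcoves, note that conditions (2) and (3) of that proposition are stated only for indices $a<i<b$, and so do not literally cover the terminal index $i=b=k-1$. The conclusion you want is still true --- statements (I) and (II) inside the proof of Proposition~\ref{intervals} propagate the relation $\varphi_n(\stt(i))=s_{0,j}(\varphi_n(\stu(i)))$ all the way to the endpoint, after which Lemma~\ref{reflectionimpliesadjacent} places the two terminal vertices in adjacent alcoves --- but strictly speaking you should cite the proof of the proposition (or argue via the endpoint reflection directly) rather than its statement.
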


\begin{proof}
This follows from Lemmas \ref{restrictionlem1} and \ref{restrictionlem2}.
\end{proof}

\begin{lemma}\label{restrictionlem4}
Let $(\lambda,k)\in \mathcal{Y}_k$ with $\varphi_n(\lambda,k)$ in an alcove and let $\stt\in \Std_k(\lambda)$. Then we have
$$m_{\stt}F_{[\stt]_n} = m_{\stt} F_{[\stt{\downarrow}_{k-1}]_n}.$$
\end{lemma}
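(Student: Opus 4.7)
The plan is to expand $m_\stt$ in the seminormal basis and observe that the two idempotents $F_{[\stt]_n}$ and $F_{[\stt\downarrow_{k-1}]_n}$ pick out the same terms from this expansion. By \cref{s-n-d}(1) we may write $m_\stt = \sum_{\sts \in \Std_k(\lambda),\, \sts \succeq \stt} a_\sts f_\sts$ with $a_\stt = 1$. Using the orthogonality $f_\sts F_\stv = \delta_{\sts,\stv}f_\sts$ from \cref{s-n-d}(5), the central property of $F_{(\mu,k)}$ from \cref{ssidempotents} (which makes $F_\stv$ annihilate $\Delta^z_{k,\mathbb{F}}(\lambda)$ whenever $\stv \in \Std_k(\mu)$ with $\mu \neq \lambda$), and the decomposition $F_\stu = \sum_{\sts \in \Std_k:\, \sts\downarrow_{k-1}=\stu} F_\sts$ for $\stu \in \Std_{k-1}$ (valid because all such $F_\stu, F_\sts$ commute as simultaneous eigenspace projectors for the Jucys--Murphy elements), one computes
\[
m_\stt F_{[\stt]_n} = \!\!\sum_{\substack{\sts \in \Std_k(\lambda) \\ \sts \succeq \stt,\ \sts \approx_n \stt}}\!\! a_\sts f_\sts, \qquad m_\stt F_{[\stt\downarrow_{k-1}]_n} = \!\!\sum_{\substack{\sts \in \Std_k(\lambda) \\ \sts \succeq \stt,\ \sts\downarrow_{k-1} \approx_n \stt\downarrow_{k-1}}}\!\! a_\sts f_\sts.
\]
By linear independence of the $f_\sts$, it suffices to show that for $\sts \in \Std_k(\lambda)$ one has $\sts \approx_n \stt$ if and only if $\sts\downarrow_{k-1} \approx_n \stt\downarrow_{k-1}$.

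The forward implication is immediate from truncating residue vectors. For the converse, assume $\sts\downarrow_{k-1} \approx_n \stt\downarrow_{k-1}$. If $\sts(k-1) = \stt(k-1)$, then both paths take the same last edge to $\lambda$, giving $r_{n,\sts}(k)=r_{n,\stt}(k)$ and hence $\sts \approx_n \stt$. The remaining task is therefore to show that the case $\sts(k-1) \neq \stt(k-1)$ is impossible under the alcove hypothesis---a strengthening of \cref{restrictionlem2}. By \cref{intervals} applied to the truncated paths in $\Std_{k-1}$, there must be an interval of disagreement $[a,k-1]$ with some associated reflection index $j'$; for $a < i < k-1$ the vertices $\varphi_n(\sts(i))$ and $\varphi_n(\stt(i))$ lie in the $j'$-th and $(j'+1)$-th alcoves and are related by $s_{0,j'}$. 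Applying \cref{edgealcove} to the edges $\sts(k-1) \to \lambda$ and $\stt(k-1) \to \lambda$ and using that $\varphi_n(\lambda,k)$ lies in the $j$-th alcove forces both $\varphi_n(\sts(k-1))$ and $\varphi_n(\stt(k-1))$ to lie in the $j$-th alcove or on the $(j-1)$-th or $j$-th wall, which in turn forces $j' = j$. Combined with $r_{n,\sts}(k-1) = r_{n,\stt}(k-1)$ and the explicit form of $\varphi_n$-images given by \cref{reflectionystuff}, a case analysis on the indices of the last two edges shows that the only possibilities either equate $\sts(k-1)$ with $\stt(k-1)$ or force $\varphi_n(\lambda,k)$ to have two coinciding coordinates, contradicting either the starting assumption or the alcove hypothesis. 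The boundary case $a = k-2$ is even more direct: $\varphi_n(\sts(k-2)) = \varphi_n(\stt(k-2))$ on the $j'$-th wall together with $r_{n,\sts}(k-1) = r_{n,\stt}(k-1)$ collapses to $\varphi_n(\lambda,k) = \varphi_n(\sts(k-2))$, which lies on the $j'$-th wall.

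The main obstacle is this final case analysis, which follows the spirit of the proof of \cref{restrictionlem2} but is tracked one additional step through the last edge, and is made more delicate by the need to match the residue at position $k-1$ explicitly rather than just at position $k$. Each subcase reduces to a direct coordinate computation exploiting the distinctness of alcove coordinates noted in \cref{wallalcove}.
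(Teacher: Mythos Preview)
Your approach is essentially the same as the paper's: both expand $m_\stt$ in the seminormal basis, compute the effect of the two idempotents, and reduce to the biconditional ``$\sts \approx_n \stt \iff \sts{\downarrow}_{k-1} \approx_n \stt{\downarrow}_{k-1}$ for $\sts \in \Std_k(\lambda)$'' (which the paper simply asserts). Your case analysis at the end is correct but more elaborate than necessary: once you know $\sts(k-1)\neq\stt(k-1)$, the proof of \cref{intervals} gives $\varphi_n(\sts(k-1))=s_{0,j'}(\varphi_n(\stt(k-1)))$, and then \cref{reflectionimpliesadjacent} forces these two vertices into the $j'$-th and $(j'+1)$-th alcoves respectively, which is incompatible with \cref{edgealcove} applied to the edges into $(\lambda,k)$.
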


\begin{proof}
Inverting the equations given in Proposition \ref{s-n-d}(1) we have
\begin{eqnarray*}
m_{\stt}F_{[\stt{\downarrow}_{k-1}]_n} &=& (f_{\stt} + \sum_{\stw\in \Std_k(\lambda)}a_{\stw} f_{\stw} )  F_{[\stt{\downarrow}_{k-1}]_n} \\
&=&  (m_\stt F_{\stt} + \sum_{\stw\in \Std_k(\lambda)}a_{\stw} m_{\stw}F_{\stw} )  F_{[\stt{\downarrow}_{k-1}]_n}\\
&=&  (m_\stt F_{\stt} + \sum_{\stw\in \Std_k(\lambda)}a_{\stw} m_{\stw}F_{\stw} )  \sum_{\substack{\stv\in \Std_{k-1} \\ \stv \approx_n \stt {\downarrow}_{k-1}}} F_{\stv}.
\end{eqnarray*}
Note that, using Lemma \ref{restrictionlem1}, for any $\stw\in \Std_k(\lambda)$ and $\stv \in \Std_{k-1}$ we have $F_\stw F_\stv = 0$ unless $\stv = \stw{\downarrow}_{k-1}$ in which case we have $F_\stw F_\stv = F_\stw F_\stw = F_\stw$. So we have
$$m_\stt F_{[\stt{\downarrow}_{k-1}]_n} = m_\stt (\sum_{\substack{\sts \in \Std_k(\lambda) \\ \sts{\downarrow}_{k-1} \approx_n \stt{\downarrow}_{k-1}}} F_\sts ).$$
Note also that as $\varphi_n(\lambda,k)$ is in an alcove we have that for any $\sts, \stt\in \Std_k(\lambda)$ we have $\sts \approx_n\stt$ if and only if $\sts{\downarrow}_{k-1} \approx_n \stt{\downarrow}_{k-1}$. Thus we get
\begin{eqnarray*}
m_{\stt} F_{[\stt{\downarrow}_{k-1}]_n} &=& m_{\stt} (\sum_{\substack{ \sts\in \Std_k(\lambda) \\ \sts \approx_n \stt}} F_\sts)\\
&=& m_\stt (\sum_{\substack{\sts\in\Std_k \\ \sts \approx_n \stt}} F_\sts) \qquad \mbox{as $m_\stt F_\sts = 0$ if $\sts \notin \Std_k(\lambda)$}\\
&=& m_\stt F_{[\stt]_n}.
\end{eqnarray*}
\end{proof}

\begin{proof}[Proof of Theorem \ref{restrictioncells}]
We have
$\Delta_{k,\mathbb{Q}}^n(\lambda) = \mathbb{Q}{\rm -span} \{g_\stt \mid \stt \in \Std_k(\lambda)\}$
where $g_\stt = m_\stt F_{[\stt]_n}\otimes 1$. Now by definition we have
$G_{(\eta,k-1)} = F_{[(\eta,k-1)]_n} \otimes 1$. 
So using Lemma \ref{restrictionlem3} we obtain
$$\Delta_{k,\mathbb{Q}}^n(\lambda) G_{(\eta,k-1)} = \mathbb{Q}{\rm -span}\{ g_\stt \, : \, \stt\in \Std_k(\lambda) \, \mbox{with} \,\, \stt(k-1) = \eta\}$$
and 
$${\rm res}_{k-1}^k \Delta_{k,\mathbb{Q}}^n(\lambda) = \bigoplus_{(\eta,k-1)\rightarrow (\lambda,k)} \Delta_{k,\mathbb{Q}}^n(\lambda) G_{(\eta,k-1)}.$$
Now, using Proposition \ref{filtration}, for each edge $(\eta, k-1)\rightarrow (\lambda,k)$ in $\mathcal{Y}$ we have a chain of $P_{k-1}^{\mathcal{O}_n}(z)$-submodules
$$N_{\mathcal{O}_n}^{\rhd (\eta,k-1)} \subseteq N_{\mathcal{O}_n}^{\unrhd (\eta,k-1)} \subseteq \Delta_{k,\mathcal{O}_n}^z(\lambda)$$
where
$$ N_{\mathcal{O}_n}^{\unrhd (\eta,k-1)} = \mathcal{O}_n{\rm -span}\{m_\stt \, : \, \stt(k-1) \unrhd \eta\} =   \mathcal{O}_n{\rm -span}\{\tilde{f}_{\stt,n} \, : \, \stt(k-1) \unrhd \eta\},$$
$$ N_{\mathcal{O}_n}^{\rhd (\eta,k-1)} = \mathcal{O}_n{\rm -span}\{m_\stt \, : \, \stt(k-1) \rhd \eta\} =   \mathcal{O}_n{\rm -span}\{\tilde{f}_{\stt,n} \, : \, \stt(k-1) \rhd \eta\}.$$
Moreover we have an isomorphism
$$ N_{\mathcal{O}_n}^{\unrhd (\eta,k-1)} / N_{\mathcal{O}_n}^{\rhd (\eta,k-1)} \rightarrow \Delta_{k-1,\mathcal{O}_n}^z(\eta) \, : \, m_\stt \mapsto m_{\stt{\downarrow}_{k-1}}.$$
Now using Lemma \ref{restrictionlem4} we have $\tilde{f}_{\stt ,n} = m_\stt F_{[\stt]_n} = m_\stt F_{[\stt{\downarrow}_{k-1}]_n}$, and under the above isomorphism we have 
$$\tilde{f}_{\stt , n} \mapsto m_{\stt{\downarrow}_{k-1}} F_{[\stt{\downarrow}_{k-1}]_n} = \tilde{f}_{\stt{\downarrow}_{k-1}}.$$
Specialising to $z=n$ we get the isomorphism
$$\Delta_{k,\mathbb{Q}}^n(\lambda) G_{(\eta, k-1)} \rightarrow \Delta_{k-1, \mathbb{Q}}^n(\eta) \, : \, g_{\stt} \mapsto g_{\stt{\downarrow}_{k-1}}.$$
\end{proof}

We now describe the restriction of simple modules in an alcove. This will be used in the next section to construct bases for the radical of cell modules (and hence for the simple modules).

\begin{theorem}\label{restrictionsimple}
Let  $(\lambda, k)\in \mathcal{Y}_k$. If    $\varphi_n\pointla $ is in  the $j$-th alcove then
\begin{equation}\label{ressimple}
\res^k_{k-1}L_{k,\QQ}^n(\lambda) \cong \bigoplus_{(\mu,k-1)\rightarrow (\lambda,k)} L_{k-1,\QQ}^n(\mu)
\end{equation}
where the sum is taken over all edges $(\mu, k-1) \rightarrow \pointla $  in $\mathcal{Y}$ such that  $\varphi_n(\mu, k-1)$ is either in the $j$-th alcove or on the $(j-1)$-th wall.
\end{theorem}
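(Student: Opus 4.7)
The plan is a downward induction on the position $j$ of $\lambda$ within its block. By Theorem~\ref{cfcells}, the block of $(\lambda,k)$ is a maximal chain $\lambda^{(1)},\ldots,\lambda^{(r)}$ with $\varphi_n(\lambda^{(i)},k)$ in the $i$-th alcove; write $\lambda=\lambda^{(j)}$ and induct on $r-j$. The main ingredients are: Theorem~\ref{restrictioncells}, which gives the block-respecting direct sum decomposition $\res\Delta_{k,\mathbb{Q}}^n(\lambda)=\bigoplus_{\mu\to\lambda}\Delta_{k-1,\mathbb{Q}}^n(\mu)$; Proposition~\ref{edgealcove}, which classifies edges $\mu\to\lambda$ by the alcove/wall position of $\varphi_n(\mu,k-1)$; the composition-factor description of cell modules in Theorem~\ref{cfcells} together with Remark~\ref{remarkwall}; and the reflection symmetry of edges in $\mathcal{Z}$, which follows from the fact that $s_{0,j}$ permutes $\varepsilon_0$ and $\varepsilon_j$ while fixing the remaining $\varepsilon_i$.

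For the base case $r=j$ we have $\Delta_{k,\mathbb{Q}}^n(\lambda)=L_{k,\mathbb{Q}}^n(\lambda)$, so $\res L(\lambda)$ equals the direct sum in Theorem~\ref{restrictioncells}. I would first show that no $\mu\to\lambda$ can have $\varphi_n(\mu,k-1)$ on the $j$-th wall: such a $\mu$ is $s_{0,j}$-fixed, so reflecting yields an edge $\mu\to s_{0,j}\varphi_n(\lambda,k)$ in $\mathcal{Z}$; the branching bounds $|s_{0,j}\varphi_n(\lambda,k)|$ from $|\mu|\leq\lfloor(k-1)/2\rfloor$, and the $j$-th alcove inequalities on $\varphi_n(\lambda,k)$ ensure the reflected vector corresponds to a valid partition, so $s_{0,j}\varphi_n(\lambda,k)\in\varphi_n(\mathcal{Y}_k)$ is a partner of $\lambda$, contradicting $r=j$. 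The same reflection argument rules out edges $\mu\to\lambda$ with $\varphi_n(\mu,k-1)$ in the $j$-th alcove possessing a partner $\mu'=s_{0,j}\mu$ at level $k-1$, so $\Delta(\mu)=L(\mu)$ in this case; and for $\mu$ on the $(j-1)$-th wall, $\Delta(\mu)=L(\mu)$ by Remark~\ref{remarkwall}.

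For the inductive step $r>j$, set $\lambda'=\lambda^{(j+1)}$. Theorem~\ref{cfcells} provides the short exact sequence $0\to L(\lambda')\to\Delta(\lambda)\to L(\lambda)\to 0$. Since restriction is exact, by the inductive hypothesis applied to $\lambda'$ in the $(j+1)$-th alcove we have
\[
\res L(\lambda')\cong\bigoplus_{\mu'\to\lambda'} L(\mu'),
\]
the sum running over $\mu'$ in the $(j+1)$-th alcove or on the $j$-th wall. I then analyse $\res L(\lambda)=\res\Delta(\lambda)/\res L(\lambda')$ block-by-block via the central idempotents $G_{(\mu,k-1)}$ of Theorem~\ref{blocks}. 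Using the edge-reflection bijection: for $\mu\to\lambda$ in the $j$-th alcove with partner $\mu'=s_{0,j}\mu$ at level $k-1$, the symmetry gives $\mu'\to\lambda'$ and the contribution $L(\mu')$ embeds as the socle of $\Delta(\mu)$, so the quotient is $L(\mu)$; for $\mu\to\lambda$ on the $j$-th wall, $\mu=s_{0,j}\mu\to\lambda'$ too, and the whole summand $\Delta(\mu)=L(\mu)$ is cancelled; for $\mu\to\lambda$ on the $(j-1)$-th wall, or in the $j$-th alcove without a partner at level $k-1$, no contribution from $\res L(\lambda')$ meets that block, so $\Delta(\mu)=L(\mu)$ survives. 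Assembling these contributions yields the stated decomposition.

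The main obstacle will be the careful bookkeeping in the inductive step: identifying how each simple submodule $L(\mu')$ of $\res L(\lambda')$ sits inside $\res\Delta(\lambda)$ within the correct $\Delta(\mu)$-summand, and checking in all the boundary cases (where the size constraints $|\mu'|\leq\lfloor(k-1)/2\rfloor$ and $|\lambda'|\leq\lfloor k/2\rfloor$ are tight) that partners at level $k-1$ exist precisely when they must to match the claimed index set of the decomposition.
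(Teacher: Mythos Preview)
Your proposal is correct and follows the same inductive scaffold as the paper: downward induction on the position of $\lambda$ in its block, using the short exact sequence $0\to L_{k,\mathbb Q}^n(\lambda')\to\Delta_{k,\mathbb Q}^n(\lambda)\to L_{k,\mathbb Q}^n(\lambda)\to 0$ from Theorem~\ref{cfcells} together with Theorem~\ref{restrictioncells}. Your reflection arguments (showing that an edge $\mu\to\lambda$ with $\mu$ on the $j$-th wall, or with $\mu$ in the $j$-th alcove having a partner $\mu'=s_{0,j}\mu$, forces $s_{0,j}\lambda\in\mathcal Y_k$) amount to a reconstruction of Lemma~\ref{maximpliesnotwall}, which the paper states separately.

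The genuine difference is in how the quotient $\res L(\lambda)=\res\Delta(\lambda)/\res L(\lambda')$ is analysed. The paper invokes the self-duality of $L_{k,\mathbb Q}^n(\lambda)$ (a general cellular-algebra fact): since $\res L(\lambda)$ is self-dual, the natural surjection $\bigoplus_{\eta\to\lambda}L(\eta)\twoheadrightarrow\res L(\lambda)$ exists, and then one only needs to count composition factors, using that the cell filtration of $\res\Delta(\lambda)$ is multiplicity-free. You instead bypass self-duality entirely and compute the quotient block-by-block via the idempotents $G_{(\mu,k-1)}$, matching each summand $L(\mu')$ of $\res L(\lambda')$ with the socle of the unique $\Delta(\mu)$ in its $P_{k-1}^{\mathbb Q}(n)$-block (unique because, by Proposition~\ref{edgealcove}, distinct $\eta$ with $\eta\to\lambda$ lie in distinct alcoves or on walls and hence in distinct blocks). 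The paper's route is shorter and more conceptual; yours is more explicit and avoids appealing to duality, at the cost of the extra bookkeeping you anticipate. Both work.

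One small point worth making explicit in your write-up: when $\mu'\to\lambda'$ with $\mu'$ in the $(j+1)$-th alcove, you use that $s_{0,j}\mu'$ lies in $\mathcal Y_{k-1}$. This holds because $|s_{0,j}\mu'|<|\mu'|\le\lfloor(k-1)/2\rfloor$ and the $(j+1)$-th alcove inequalities guarantee the reflected coordinates are strictly decreasing from index $1$ on; it is worth recording this, since the converse direction (reflecting $\mu$ in the $j$-th alcove up to the $(j+1)$-th) can genuinely fail the size bound, which is exactly what drives your ``no partner'' case.
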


Note that if $k$ is even, $n=0$ and $\lambda = \varnothing$ then both sides of the equation in (\ref{ressimple}) are zero and so the result trivially holds in this case.

In order to prove this theorem we will use the following lemma.

\begin{lemma}\label{maximpliesnotwall}
Let $(\lambda,k)\in \mathcal{Y}_k$ with $\varphi_n(\lambda,k)$ in the $j$-th alcove. Suppose that  there exists an edge  $(\eta , k-1)\rightarrow (\lambda,k)$ in $\mathcal{Y}$ with $\varphi_n(\eta,k-1)$ on the $j$-th wall. Then either
\begin{enumerate}
\item $k$ is even, $\lambda = \eta$ and we have $(\mu,k):=(\eta + \varepsilon_j,k)\in \mathcal{Y}_k$, or
\item $k$ is odd, $\lambda = \eta - \varepsilon_j$ and we have $(\mu,k) := (\eta,k)\in \mathcal{Y}_k$.
\end{enumerate}
In both cases there is an edge
 $(\eta, k-1)\rightarrow (\mu,k)$ in $\mathcal{Y}$ and $\varphi_n(\mu,k) = s_{0,j}(\varphi_n(\lambda ,k))$.
\end{lemma}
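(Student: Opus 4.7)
The proof plan is a direct case analysis on the parity of $k$ and the type of edge $(\eta,k-1)\to(\lambda,k)$. The first step is to unfold the embedding $\varphi_n$ explicitly. For $k$ even one has $\varphi_n(\lambda,k)_0=n-|\lambda|$ and $\varphi_n(\eta,k-1)_0=n-1-|\eta|$, while for $k$ odd the roles of $n$ and $n-1$ are interchanged; in all cases $\varphi_n(\cdot)_i=(\cdot)_i-i$ for $i\geq 1$. The wall assumption $\varphi_n(\eta,k-1)_0=\varphi_n(\eta,k-1)_j$ then becomes a single algebraic identity (e.g.\ $n-1-|\eta|=\eta_j-j$ when $k$ is even), and this identity will be substituted repeatedly.

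Next, for $k$ even the branching rule gives only two possibilities: $\lambda=\eta$, or $\lambda=\eta\cup\{a\}$ for some $a\in A(\eta)$. Writing out $\varphi_n(\lambda,k)_0$ and $\varphi_n(\lambda,k)_j$ in the second sub-case and using the wall identity, one finds that $a$ must lie in row $j$ (otherwise the $j$-th coordinate equals the $0$-th), and then the two coordinates satisfy $\varphi_n(\lambda,k)_0<\varphi_n(\lambda,k)_j$, contradicting the strict inequalities defining the $j$-th alcove. Hence $\lambda=\eta$, and I then set $\mu=\eta\cup\{(j,\eta_j+1)\}$. Addability of $(j,\eta_j+1)$ (i.e.\ $\eta_{j-1}\geq\eta_j+1$) is forced by the other half of the alcove condition $\varphi_n(\lambda,k)_{j-1}>\varphi_n(\lambda,k)_0$ combined with the wall identity; the size condition $|\mu|\leq k/2$ holds because $|\eta|\leq(k-2)/2$ at the odd level $k-1$, so $(\mu,k)\in\mathcal{Y}_k$ and the branching rule supplies the edge $(\eta,k-1)\to(\mu,k)$.

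The case $k$ odd is essentially symmetric. Now the edge is either $\lambda=\eta$ or $\lambda=\eta\setminus\{a\}$. The first sub-case is ruled out immediately: $\varphi_n(\lambda,k)_0=n-1-|\eta|=\eta_j-j-1<\varphi_n(\lambda,k)_j$ by the wall identity, contradicting the alcove condition. For the second sub-case, the same coordinate comparison shows that removing a box from a row $j'\neq j$ would leave $\varphi_n(\lambda,k)_j=\eta_j-j=n-|\eta|=\varphi_n(\lambda,k)_0$, again violating strictness; hence $a=(j,\eta_j)$ and $\lambda=\eta-\varepsilon_j$. One then takes $\mu=\eta$, and the edge $(\eta,k-1)\to(\eta,k)$ is the trivial edge that is always present in $\mathcal{Y}$.

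Finally, in both cases one verifies $\varphi_n(\mu,k)=s_{0,j}(\varphi_n(\lambda,k))$ by a one-line substitution of the wall identity into the coordinate formulas: in case (1) the new $0$-th coordinate $n-|\mu|=n-|\eta|-1$ coincides with $\eta_j-j=\varphi_n(\lambda,k)_j$, and the new $j$-th coordinate $\eta_j+1-j$ coincides with $n-|\eta|=\varphi_n(\lambda,k)_0$; case (2) is analogous with $n$ replaced by $n-1$. The only real obstacle is careful bookkeeping of the $\rho$-shift and the $n$-versus-$(n-1)$ distinction between even and odd levels, but once the coordinates are written down the lemma reduces to a short substitution in each of the four branches.
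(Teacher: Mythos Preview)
Your proof is correct and follows essentially the same strategy as the paper's: use the parity of $k$ together with the wall identity to pin down the edge type, then construct $\mu$ and verify the reflection by direct substitution. The only difference is organisational: the paper invokes Proposition~\ref{edgealcove} (case~(2)) to eliminate the unwanted sub-cases in one stroke, whereas you redo that elimination inline via explicit coordinate comparisons. Your version is therefore longer but self-contained, and it also makes explicit two points the paper leaves implicit, namely the addability of $(j,\eta_j+1)$ in case~(1) and the size bound $|\mu|\le k/2$.
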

 
\begin{proof}
Suppose that $(\eta,k-1)\rightarrow (\lambda,k)$ is an edge in $\mathcal{Y}$ and $\varphi_n(\eta,k-1)$ is on the $j$-th wall. Then using Proposition \ref{edgealcove} we have either $k$ is even and $\varphi_n(\eta,k-1) = \varphi_n(\lambda,k) - \varepsilon_0$ or $k$ is odd and $\varphi_n(\eta,k-1) = \varphi_n(\lambda,k)+\varepsilon_j$.  In the first case we define $(\mu,k)=(\eta + \varepsilon_j, k)\in \mathcal{Y}_k$ (note that $\mu$ is a partition as $\varphi_n(\lambda,k)$ is in the $j$-th alcove). In the second case we define $(\mu,k)=(\eta,k)\in \mathcal{Y}_k$. In both cases we have  an edge
 $(\eta, k-1)\rightarrow (\mu,k)$ and $\varphi_n(\mu,k) = s_{0,j}(\varphi_n(\lambda,k))$ as required.  
\end{proof}

\begin{proof}[Proof of Theorem \ref{restrictionsimple}]
We prove this result by downward induction on the degree of partitions in each block.
First consider the case where $(\lambda, k)$ has maximal degree in its block. Using Theorems \ref{cfcells} and \ref{restrictioncells} we have
$${\rm res}_{k-1}^k L_{k,\mathbb{Q}}^n(\lambda) = {\rm res}_{k-1}^k \Delta_{k,\mathbb{Q}}^n(\lambda) \cong \bigoplus_{(\eta,k-1)\rightarrow (\lambda,k)} \Delta_{k-1, \mathbb{Q}}^n(\eta)$$
where the sum is over all edges $(\eta,k-1)\rightarrow (\lambda,k)$ in $\mathcal{Y}$. Now using Proposition \ref{edgealcove} and Lemma \ref{maximpliesnotwall}, for any such edge we have that either $\varphi_n(\eta,k-1)$ is in the $j$-th alcove or on the $(j-1)$-th wall. Note also that as $L_{k,\mathbb{Q}}^n(\lambda)$ is self dual, so is its restriction and so we have $\Delta_{k-1,\mathbb{Q}}^n(\eta) = L_{k-1,\mathbb{Q}}^n(\eta)$ for any edge $(\eta, k-1)\rightarrow (\lambda, k)$. So we are done in this case.

Now assume that $(\lambda,k)$ is not maximal in its block. Then using Theorem \ref{cfcells} we have $(\mu,k)\in \mathcal{Y}_k$ with $\varphi_n(\mu,k) = s_{0,j}(\varphi_n(\lambda,k))$ in the $(j+1)$-th alcove, $|\mu|>|\lambda|$ and an exact sequence
\begin{equation*}
0 \rightarrow {\rm res}_{k-1}^k L_{k,\mathbb{Q}}^n(\mu) \rightarrow {\rm res}_{k-1}^k \Delta_{k,\mathbb{Q}}^n(\lambda) \rightarrow {\rm res}_{k-1}^k L_{k,\mathbb{Q}}^n(\lambda) \rightarrow 0.
\end{equation*}
Using Theorem \ref{restrictioncells} and induction, this short exact sequence becomes
\begin{equation}\label{sesrestriction}
0 \rightarrow \bigoplus_{(\theta,k-1)\rightarrow (\mu,k)} L_{k-1,\mathbb{Q}}^n(\theta) \rightarrow \bigoplus_{(\eta,k-1) \rightarrow (\lambda,k)} \Delta_{k-1,\mathbb{Q}}^n(\eta) \rightarrow {\rm res}_{k-1}^k L_{k,\mathbb{Q}}^n(\lambda) \rightarrow 0.
\end{equation}
Note that for all $(\theta, k-1)$ appearing in (\ref{sesrestriction}) we have that $\varphi_n(\theta, k-1)$ is either in the $(j+1)$-th alcove or on the $j$-th wall, and  for all $(\eta,k-1)$ appearing in (\ref{sesrestriction}) we have that $\varphi_n(\eta,k-1)$ is either in the $j$-th alcove, on the $j$-th wall or on the $(j-1)$-th wall.

As $L_{k,\mathbb{Q}}^n(\lambda)$ is self-dual, so is its restriction and so we have a surjection
$$\bigoplus_{(\eta,k-1) \rightarrow (\lambda,k)} L_{k-1,\mathbb{Q}}^n(\eta) \rightarrow {\rm res}_{k-1}^k L_{k,\mathbb{Q}}^n(\lambda)$$
where the $(\eta,k-1)$ appearing in the direct sum are as in (\ref{sesrestriction}).
Now, any $L_{k-1,\mathbb{Q}}^n(\eta)$ with $\varphi_n(\eta, k-1)$ in the $j$-th alcove, or on the $(j-1)$-th wall does not appear on the left hand side of the short exact sequence given in (\ref{sesrestriction}) and so must appear in ${\rm res}_{k-1}^k L_{k,\mathbb{Q}}^n(\lambda)$. Finally, for any $L_{k,\mathbb{Q}}^n(\eta)$ with $\varphi_n(\eta,k-1)$ on the $j$-th wall we have an edge $(\eta,k-1)\rightarrow (\mu,k)$ in $\mathcal{Y}$ by Lemma \ref{maximpliesnotwall}, and thus $L_{k-1,\mathbb{Q}}^n(\eta)$ appears on the left hand side of (\ref{sesrestriction}). As the composition factors of $\bigoplus_{(\eta,k-1) \rightarrow (\lambda,k)} \Delta_{k-1,\mathbb{Q}}^n(\eta)$ are multiplicity free, such simple modules cannot appear in ${\rm res}_{k-1}^k L_{k,\mathbb{Q}}^n(\lambda)$. This proves the result.  
\end{proof}

\section{A basis for the radical of a cell module}\label{the radical}

Throughout this section we continue to fix $n\in \mathbb{Z}_{\geq 0}$. The aim of this section is to  provide an explicit basis for the radical of the bilinear form on any cell module
$\standard{k,\QQ}n(\lambda)$ for $\algebra{k}\QQ(n)$, and hence also for any simple module.

\medskip

Using Proposition \ref{edgealcove} we can make the following definition.

\begin{definition}\label{npermissiblepathas}
Let $(\lambda, k)\in \mathcal{Y}$. 
 Given $\stt \in \Std_k(\lambda)$, we say that $\stt$ is $n$-permissible  if the following conditions hold.
\begin{enumerate}[label=(\arabic{*}), ref=\arabic{*},leftmargin=0pt,itemindent=1.5em]
\item  if $\varphi_n\pointla $   in the first alcove, 
 then $\varphi_n(\stt(i)) $ belongs to the first alcove for all $0\leq i \leq k$. 
\item if $\varphi_n(\lambda,k)$  in the $j$-th alcove for some $j\geq 2$,  then $\stt$ is a path which last enters the $j$-th alcove from the $(j-1)$-th wall. 
\end{enumerate}
We denote by $\Std_k^n(\lambda)$ the set of all $n$-permissible $\stt\in \Std_k(\lambda)$.
\end{definition}  

Note that if $\varphi_n(\lambda,k)$ is on a wall then any $\stt\in \Std_k(\lambda)$ is $n$-permissible and so $\Std_k^n(\lambda)=\Std_k(\lambda)$.

\begin{example}
Let $n=2$ and $k=6$ as in \hyperref[piccy]{Figure~\ref*{piccy}}. 
We have paths 
$$
\sts=(\varnothing,\varnothing, (1),\varnothing,(1), \varnothing,\varnothing)
\quad  \stt=(\varnothing,\varnothing, (1),(1),(1), \varnothing,\varnothing)
\quad
\stu=(\varnothing,\varnothing, (1),(1),(2), (2), (3))
$$
The paths $\sts$ and $\stu$ are $2$-permissible but the  path $\stt$ is not. \end{example}

\begin{lemma}\label{dimsimples}
Let $(\lambda, k)\in \mathcal{Y}_k$.  We have that 
   $
  \dim_\QQ(\simple{k,\QQ}n(\lambda)) = 
  |\Std_k^n(\lambda)|.  
  $
 \end{lemma}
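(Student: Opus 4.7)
The plan is to induct on $k$. The base case $k=0$ is immediate since $\mathcal{Y}_0 = \{(\varnothing,0)\}$, both $L_{0,\mathbb{Q}}^n(\varnothing) \cong \mathbb{Q}$ and $\Std_0^n(\varnothing)$ (containing only the trivial path) have cardinality $1$.

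For the inductive step, I would split into two cases depending on the position of $\varphi_n(\lambda,k)$. If $\varphi_n(\lambda,k)$ lies on a wall, then by Remark \ref{remarkwall} we have $\Delta_{k,\mathbb{Q}}^n(\lambda) = L_{k,\mathbb{Q}}^n(\lambda)$, and by the second remark following Definition \ref{npermissiblepathas} every element of $\Std_k(\lambda)$ is $n$-permissible; both sides thus equal $|\Std_k(\lambda)|$. The interesting case is when $\varphi_n(\lambda,k)$ lies in the $j$-th alcove for some $j \geq 1$. Here I would apply Theorem \ref{restrictionsimple} and the inductive hypothesis to obtain
\[
\dim L_{k,\mathbb{Q}}^n(\lambda) \;=\; \sum_{(\mu,k-1)\to(\lambda,k)} \dim L_{k-1,\mathbb{Q}}^n(\mu) \;=\; \sum_{(\mu,k-1)\to(\lambda,k)} |\Std_{k-1}^n(\mu)|,
\]
where the sum runs over edges with $\varphi_n(\mu,k-1)$ in the $j$-th alcove or on the $(j-1)$-th wall (the latter option being vacuous when $j=1$).

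The key step is a parallel combinatorial recursion for $|\Std_k^n(\lambda)|$. Any $\stt \in \Std_k^n(\lambda)$ is determined by its truncation $\stt\downarrow_{k-1}$, and by Proposition \ref{edgealcove} the penultimate vertex $\mu = \stt(k-1)$ has $\varphi_n(\mu,k-1)$ lying in the $j$-th alcove, on the $j$-th wall, or on the $(j-1)$-th wall. I would argue each case:
if $\mu$ is in the $j$-th alcove, then the ``last entry into the $j$-th alcove'' occurs at or before step $k-1$, so $\stt$ is $n$-permissible iff $\stt\downarrow_{k-1}$ is $n$-permissible (ending again in the $j$-th alcove);
if $\mu$ is on the $(j-1)$-th wall, then $\stt$ enters the $j$-th alcove at the final step $k$ from the $(j-1)$-th wall, making $\stt$ $n$-permissible, while $\stt\downarrow_{k-1}$ ends on a wall and is automatically $n$-permissible;
if $\mu$ is on the $j$-th wall, then $\stt$ enters the $j$-th alcove at step $k$ from the wrong ($j$-th) wall, so $\stt$ fails to be $n$-permissible. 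This yields
\[
|\Std_k^n(\lambda)| \;=\; \sum_{(\mu,k-1)\to(\lambda,k)} |\Std_{k-1}^n(\mu)|
\]
with the sum over precisely the same edges as above, closing the induction.

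The main obstacle is the bookkeeping in the alcove case: one must carefully verify that the $n$-permissibility condition at level $k$ matches, under truncation, with the combination of $n$-permissibility at level $k-1$ for those $\mu$ in the $j$-th alcove and with no condition at all for those $\mu$ on the $(j-1)$-th wall; this is what makes the two recursions identical. The case $j=1$ collapses cleanly (only the alcove summands survive), and the degenerate case $n=0$, $\lambda=\varnothing$, $k$ even is handled trivially since both sides vanish (the remark after Theorem \ref{restrictionsimple} covers the dimension side, while the first step of any path forces $\stt(1) = \varnothing$ with $\varphi_0(\varnothing,1)$ on the first wall, so no $n$-permissible path exists).
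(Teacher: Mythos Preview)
Your proof is correct and follows essentially the same approach as the paper: induction on $k$ using Theorem~\ref{restrictionsimple}, with the degenerate case $k$ even, $n=0$, $\lambda=\varnothing$ handled separately. You have simply spelled out in full the combinatorial recursion for $|\Std_k^n(\lambda)|$ and the wall case that the paper leaves implicit in its one-line proof.
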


\begin{proof}
This follows  from \hyperref[restrictionsimple]{Theorem~\ref*{restrictionsimple}} by induction on $k$. Note that for $k$ even, $n=0$ and $\lambda = \varnothing$  we have that $\Std_k^n(\lambda)=\emptyset$ and $L_{k,\mathbb{Q}}^n(\lambda) = 0$  so the result also holds in this case.
\end{proof}

\begin{example}
Let $n=2$ and $k=6$ as in \hyperref[piccy]{Figure~\ref*{piccy}}.  We have that 
$$\dim_\QQ(L_{6,\QQ}^{2}(\varnothing) )= 4 \quad
\quad  \dim_\QQ(L_{6,\QQ}^{2}((1))) = 4.$$  
\end{example}

The next result lifts the dimension result given in   \hyperref[dimsimples]{Lemma~\ref*{dimsimples}} to an explicit basis.

\begin{theorem}\label{radicalofform}
Let $(\lambda,k)\in \mathcal{Y}_k$.
Then the set   $\{g_{\stt } \mid \stt \in \Std_k(\lambda) \setminus \Std_k^n(\lambda)\}$  forms a $\QQ$-basis for  $\rad\standard{k,\QQ}n(\lambda)$.
\end{theorem}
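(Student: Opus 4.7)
The plan is to proceed by induction on $k$. The base case $k=0$ is immediate since $\standard{0,\QQ}{n}(\varnothing)$ is one-dimensional and simple. For the inductive step, fix $(\lambda,k)\in\mathcal{Y}_k$. If $\varphi_n(\lambda,k)$ lies on a wall then \cref{remarkwall} gives $\standard{k,\QQ}{n}(\lambda)=\simple{k,\QQ}{n}(\lambda)$ and by definition $\Std_k^n(\lambda)=\Std_k(\lambda)$, so the claim is vacuous. We may therefore assume $\varphi_n(\lambda,k)$ lies in the $j$-th alcove for some $j\ge 1$.

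The idea is to compute $\rad\standard{k,\QQ}{n}(\lambda)$ by restricting the short exact sequence
\[
0\to \rad\standard{k,\QQ}{n}(\lambda)\to \standard{k,\QQ}{n}(\lambda)\to \simple{k,\QQ}{n}(\lambda)\to 0
\]
to $P_{k-1}^\QQ(n)$ and combining \cref{restrictioncells} with \cref{restrictionsimple}. The former writes $\res^k_{k-1} \standard{k,\QQ}{n}(\lambda)=\bigoplus_{(\eta,k-1)\to(\lambda,k)} M_\eta$ with $M_\eta\cong \standard{k-1,\QQ}{n}(\eta)$ via $g_\stt\mapsto g_{\stt\downarrow_{k-1}}$; the latter gives $\res^k_{k-1} \simple{k,\QQ}{n}(\lambda)\cong \bigoplus \simple{k-1,\QQ}{n}(\mu)$ summed over those edges for which $\varphi_n(\mu,k-1)$ lies in the $j$-th alcove or on the $(j-1)$-th wall. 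Each $M_\eta$ has simple head $\simple{k-1,\QQ}{n}(\eta)$, and as $\res L$ is semisimple the image of $M_\eta\to \res L$ is either $0$ or $\simple{k-1,\QQ}{n}(\eta)$; since the quotient is surjective and each simple appearing in $\res L$ can only be covered by the $M_\mu$ of matching label, the image equals $\simple{k-1,\QQ}{n}(\eta)$ exactly when $\varphi_n(\eta,k-1)$ lies in the $j$-th alcove or on the $(j-1)$-th wall, and is zero when $\varphi_n(\eta,k-1)$ lies on the $j$-th wall. Invoking the inductive hypothesis together with the observation that $\rad\standard{k-1,\QQ}{n}(\eta)=0$ for $\eta$ on the $(j-1)$-th wall (by \cref{remarkwall}), this identifies $\rad\standard{k,\QQ}{n}(\lambda)$ as the $\QQ$-span of those $g_\stt$ for which either $\varphi_n(\stt(k-1))$ lies on the $j$-th wall, or $\varphi_n(\stt(k-1))$ lies in the $j$-th alcove and $\stt\downarrow_{k-1}\notin \Std_{k-1}^n(\stt(k-1))$.

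It remains to verify that this index set coincides with $\Std_k(\lambda)\setminus \Std_k^n(\lambda)$. By \cref{edgealcove} the vertex $\varphi_n(\stt(k-1))$ must lie in the $j$-th alcove, on the $j$-th wall, or (only when $j\ge 2$) on the $(j-1)$-th wall. If it lies in the $j$-th alcove the last entry of $\stt$ into the $j$-th alcove occurs at a step $\le k-1$, so $\stt\in \Std_k^n(\lambda)$ if and only if $\stt\downarrow_{k-1}\in \Std_{k-1}^n(\stt(k-1))$; if it lies on the $j$-th wall then $\stt$ last enters the $j$-th alcove at step $k$ across the \emph{wrong} wall, so $\stt\notin \Std_k^n(\lambda)$; if it lies on the $(j-1)$-th wall then $\stt$ last enters the $j$-th alcove at step $k$ across the correct wall, so $\stt\in \Std_k^n(\lambda)$. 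These three cases align precisely with the basis description above, completing the induction.

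The main subtlety is establishing that the quotient $\res\Delta\to \res L$ decomposes summand-wise into the natural quotient maps $M_\eta\to \simple{k-1,\QQ}{n}(\eta)$ on those summands where $\simple{k-1,\QQ}{n}(\eta)$ appears in $\res L$; once this is secured the remaining work is a careful alignment of the three alcove/wall positions from \cref{edgealcove} with the definition of $n$-permissible paths from \cref{npermissiblepathas}.
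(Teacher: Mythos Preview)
Your proof is correct and follows the same inductive skeleton as the paper (restriction to $P_{k-1}^{\mathbb{Q}}(n)$ via \cref{restrictioncells} combined with \cref{restrictionsimple}), but the treatment of the $j$-th wall case is genuinely different. The paper shows directly that $g_\stt\in\rad\Delta_{k,\mathbb{Q}}^n(\lambda)$ for a carefully chosen path $\stt$ ending through the $j$-th wall by computing $\langle g_\stt,g_\stt\rangle=0$ explicitly, using the formula for $\gamma_{(\eta,k-1)\to(\lambda,k)}$ from \cref{branching}; it then finishes by invoking the dimension count of \cref{dimsimples}. You bypass this calculation entirely: since the image of each $M_\eta$ in the semisimple module $\res^k_{k-1}L_{k,\mathbb{Q}}^n(\lambda)$ is either $0$ or $L_{k-1,\mathbb{Q}}^n(\eta)$, and since the images for distinct $\eta$ are pairwise non-isomorphic simples and hence sum directly, the kernel of $\res\Delta\to\res L$ decomposes as $\bigoplus_\eta K_\eta$ with $K_\eta$ forced to be all of $M_\eta$ when $\varphi_n(\eta,k-1)$ lies on the $j$-th wall. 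Your route is cleaner in that it avoids \cref{branching} and \cref{dimsimples} altogether; the paper's route has the merit of exhibiting an explicit element of the radical via the bilinear form. One small point you might make explicit: the statement that $M_\eta$ has simple head $L_{k-1,\mathbb{Q}}^n(\eta)$ fails in the exceptional case $n=0$, $k-1$ even, $\eta=\varnothing$ of \cref{cfcells}, but this cannot occur in your inductive step since the unique edge out of $(\varnothing,k-1)$ then leads to $(\varnothing,k)$ with $k$ odd, which lies on a wall and is handled separately.
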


\begin{proof}
If $k$ is even, $n=0$ and $\lambda = \varnothing$ then $\rad\Delta_{k,\mathbb{Q}}^0(\varnothing)=\Delta_{k,\mathbb{Q}}^0(\varnothing)$ and $\Std_k^0(\varnothing)=\emptyset$ so we're also done.

Now, if $\varphi_n(\lambda,k)$ is on a wall then by Remark \ref{remarkwall} we have that  $\Delta_{k,\mathbb{Q}}^n(\lambda)$ is simple and so $\rad \Delta_{k,\mathbb{Q}}^n(\lambda)=0$. As we have $\Std_k^n(\lambda)=\Std_k(\lambda)$ in this case, we are also done.

Now suppose that $L_{k,\mathbb{Q}}^n(\lambda)\neq 0$ and $\varphi_n(\lambda,k)$ is in an alcove, say the $j$-th alcove. We proceed by induction on $k$. If $k=1$ then $\lambda=\varnothing$. We have that $\Delta_{1,\mathbb{Q}}^n(\varnothing)$ is one-dimensional, hence simple and so $\rad\Delta_{1,\mathbb{Q}}^n(\varnothing) = 0$. As we have $\Std_1(\varnothing)=\Std_1^n(\varnothing)$ the result holds in this case.

 Assume that the result holds for $k-1$ and prove it for $k$. Using Theorem \ref{restrictioncells} we have an isomorphism
\begin{equation}\label{restrictioniso}
{\rm res}_{k-1}^k \Delta_{k,\mathbb{Q}}^n(\lambda) \rightarrow \bigoplus_{(\eta , k-1) \rightarrow (\lambda,k)}\Delta_{k-1, \mathbb{Q}}^n(\eta) \, : \, g_{\stt} \mapsto g_{\stt{\downarrow}_{k-1}}
\end{equation}
for any $\stt\in \Std_k(\lambda)$, where the direct sum is over all edges $(\eta,k-1)\rightarrow (\lambda,k)$ in $\mathcal{Y}$. So the pre-image of $\bigoplus_{(\eta,k-1)\rightarrow (\lambda,k)} \rad\Delta_{k-1,\mathbb{Q}}^n(\eta)$ under the isomorphism given in (\ref{restrictioniso}) gives precisely the (unique) smallest submodule of ${\rm res}_{k-1}^k \Delta_{k,\mathbb{Q}}^n(\lambda)$ with semisimple quotient. Now, note that
$${\rm res}_{k-1}^k \Delta_{k,\mathbb{Q}}^n(\lambda) / {\rm res}_{k-1}^k \rad \Delta_{k,\mathbb{Q}}^n(\lambda) = {\rm res}_{k-1}^k  \left( \Delta_{k,\mathbb{Q}}^n(\lambda) / \rad \Delta_{k,\mathbb{Q}}^n(\lambda) \right) = {\rm res}_{k-1}^k L_{k,\mathbb{Q}}^n(\lambda)$$
which is semisimple using Theorem \ref{restrictionsimple}. Thus, under the isomorphism given in (\ref{restrictioniso}), the pre-image of $\bigoplus_{(\eta,k-1)\rightarrow (\lambda,k)} \rad\Delta_{k-1,\mathbb{Q}}^n(\eta)$ must be contained in ${\rm res}_{k-1}^k \rad \Delta_{k,\mathbb{Q}}^n(\lambda)$. 

Using Proposition \ref{edgealcove}, we know that for any edge $(\eta,k-1)\rightarrow (\lambda,k)$ in $\mathcal{Y}$ we have that either $\varphi_n(\eta,k-1)$ is in the $j$-th alcove, on the $(j-1)$-th wall or on the $j$-th wall.
If $\varphi_n(\eta,k-1)$ is in the $j$-th alcove, then by induction and using the isomorphism given in (\ref{restrictioniso}) we can deduce that any $g_{\stt}$ with $\stt\in \Std_k(\lambda)\setminus \Std_k^n(\lambda)$ and $\stt(k-1) = (\eta,k-1)$ must be in $\rad\Delta_{k,\mathbb{Q}}^n(\lambda)$.

We claim that any $g_{\stt }$ with $\stt {\downarrow}_{k-1}\in \Std_{k-1}(\eta)$ and $\varphi_n(\eta,k-1)$ on the $j$-th wall also belongs to $\rad\Delta_k^n(\lambda)$. Then, using \hyperref[dimsimples]{Lemma~\ref*{dimsimples}} the result will follow by a dimension count.

 Now, if $\varphi_n(\eta,k-1)$ is on the $j$-th wall, then $\standard{k-1,\QQ}n(\eta)$ is simple, so it is enough to show that one $g_{\stt }$ for $\stt {\downarrow}_{k-1}\in \Std_{k-1}{(\eta)}$ is in the radical of $\Delta_{k,\QQ}^n(\lambda)$.
 Suppose  $|\mu| = m$, then we choose $\stt $ to be any path satisfying $\stt (0)=\stt (1) = \ldots = \stt (k-2m)=\varnothing $ (the rest of the path can be taken by adding boxes along the rows of $\eta$ at every even step and with the last step going from $(\eta,k-1)$ to $(\lambda,k)$). It is easy to see that for such a path $\stt$ we have $[\stt ]_n^{(\lambda,k)} = \{\stt\}$ and $[\stt{\downarrow}_{k-1}]_n^{(\eta,k-1)} = \{\stt {\downarrow}_{k-1}\}$. Thus we have
$$\tilde{f}_{\stt,n} = f_{\stt} \in P_k^{\mathcal{O}_n}(z) \quad \mbox{and} \quad g_\stt = f_{\stt} \otimes 1 \in P_k^\mathbb{Q}(n),$$
and similarly $\tilde{f}_{\stt {\downarrow}_{k-1},n} = f_{\stt{\downarrow}_{k-1}}\in P_{k-1}^{\mathcal{O}_n}(z)$ and $g_{\stt{\downarrow}_{k-1}} = f_{\stt{\downarrow}_{k-1}} \otimes 1 \in P_{k-1}^\mathbb{Q}(n)$.
Now by Proposition \ref{innerproductg} we have that $\langle g_{\stt}, g_{\stu}\rangle = 0$ for all $\stu\in \Std_k(\lambda)$ with $\stu \neq \stt$ (as $\stt$ is alone in its residue class). It remains to show that $\langle g_\stt, g_\stt \rangle =0$. Now we have
$$\langle g_\stt, g_\stt \rangle = \langle f_\stt, f_\stt \rangle \otimes 1$$
Let $\sts =\stt{\downarrow}_{k-1}\in \Std_{k-1}(\eta)$. Then we have
\begin{equation}\label{recinnerproduct}
\langle f_\stt,f_\stt \rangle = \gamma_{(\eta,k-1)\rightarrow (\lambda,k)}\langle f_\sts,f_\sts \rangle
\end{equation}
where the coefficient $\gamma_{(\eta,k-1)\rightarrow (\lambda,k)}$ is given in  Proposition \ref{branching}.
Now as $\varphi_n(\lambda,k)$ is in the $j$-th alcove and $\varphi_n(\eta,k-1)$ is on the $j$-th wall we know, from Lemma \ref{maximpliesnotwall} and Proposition \ref{branching} that either
\begin{enumerate}
\item[(I)] $k$ is even, $\lambda = \eta$, $\eta + \varepsilon_j$ is a partition and 
\begin{equation}\label{gammaeven}
\gamma_{(\eta, k-1)\rightarrow (\lambda , k)} = \frac{\prod_{\beta\in A(\lambda)}(z-c(\beta) -|\lambda|)}{\prod_{\beta\in R(\lambda)}(z-c(\beta)-|\lambda|)}\,  r'
\end{equation}
for some $r'\in \mathbb{Q}$, or
\item[(II)] $k$ is odd and $\lambda = \eta - \varepsilon_j$ and 
$$\gamma_{(\eta, k-1)\rightarrow (\lambda , k)} = \frac{(z-c(\alpha)-|\lambda| -1)}{(z-c(\alpha) -|\lambda|)}\,  r$$
where $\alpha$ denotes the box removed from row $j$ of $\eta$ to get $\lambda$ and  $r\in \mathbb{Q}$.
\end{enumerate}
Now as $\varphi_n(\eta,k-1)$ is on the $j$-th wall we have
$$\eta_j - j = \left\{ \begin{array}{ll} n-1-|\eta| & \mbox{if $k$ is even}\\ n-|\eta| & \mbox{if $k$ is odd}\end{array}\right.$$
In case (I) the partition $\lambda = \eta$ has an addable box $\beta$ in row $j$ with content 
$$c(\beta) = \eta_j +1 -j = n-|\lambda|.$$
Thus we get from (\ref{recinnerproduct}) that $\langle g_\stt ,g_\stt \rangle = 0$. (Note that there is no possible cancellation in (\ref{gammaeven}) as the content of removable boxes and addable boxes of a given partition are all distinct.) In case (II) the content of $\alpha$ is given by
$$c(\alpha) = \eta_j - j = n-|\eta| = n-|\lambda| -1.$$
Thus we get from (\ref{recinnerproduct}) that $\langle g_\stt , g_\stt \rangle =0$ in this case as well.
\end{proof}

\begin{corollary}\label{MAIN} Let $(\lambda,k)\in \mathcal{Y}_k$. Then the set $\{ g_\stt + \rad \Delta_{k,\mathbb{Q}}^n(\lambda) \mid \stt \in \Std_k^n(\lambda)\}$ form a basis $L_{k,\mathbb{Q}}^n(\lambda)$. 
Moreover, if $\varphi_n(\lambda,k)$ is in the first alcove, then for all $\stt\in \Std_k^n(\lambda)$ we have $\tilde{f}_{\stt , n} = f_\stt$ and $g_\stt = f_\stt \otimes 1$. 
\end{corollary}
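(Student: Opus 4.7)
The plan is in two independent parts, each short.

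For the basis assertion, I would deduce it immediately from Theorem~\ref{radicalofform} combined with Proposition~\ref{innerproductg}. The latter supplies $\{g_\stt \mid \stt\in\Std_k(\lambda)\}$ as a $\QQ$-basis of $\standard{k,\QQ}n(\lambda)$, while Theorem~\ref{radicalofform} singles out its subset indexed by $\Std_k(\lambda)\setminus\Std_k^n(\lambda)$ as a basis of $\rad\standard{k,\QQ}n(\lambda)$. Since the two subsets partition a basis of the ambient module, the images of the complementary vectors, those indexed by $\stt\in\Std_k^n(\lambda)$, descend to a basis of the quotient $\simple{k,\QQ}n(\lambda)=\standard{k,\QQ}n(\lambda)/\rad\standard{k,\QQ}n(\lambda)$.

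For the ``moreover'' clause, the crux is to establish that when $\varphi_n(\lambda,k)$ lies in the first alcove and $\stt\in\Std_k^n(\lambda)$, the $n$-residue class collapses to a singleton, i.e. $[\stt]_n=\{\stt\}$. Granting this, the definition $F_{[\stt]_n}=\sum_{\sts\in[\stt]_n}F_\sts$ reduces to $F_{[\stt]_n}=F_\stt$, so that $\tilde f_{\stt,n}=m_\stt F_{[\stt]_n}=m_\stt F_\stt=f_\stt$; specialising at $z=n$ then yields $g_\stt=f_\stt\otimes 1$.

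To prove the singleton claim I would invoke Proposition~\ref{intervals} contrapositively. Suppose $\sts\approx_n\stt$ with $\sts\neq\stt$. Then $\sts$ and $\stt$ must disagree on at least one interval $[a,b]$, and Proposition~\ref{intervals}(1) forces $\varphi_n(\stt(a))$ to lie on the $j$-th wall for some $j\geq 1$. But the $n$-permissibility condition of Definition~\ref{npermissiblepathas} in the first-alcove case requires every $\varphi_n(\stt(i))$, in particular $\varphi_n(\stt(a))$, to satisfy $x_0>x_1>x_2>\cdots$ strictly, which is incompatible with $x_0=x_j$ for any $j\geq 1$. This contradiction gives $[\stt]_n=\{\stt\}$.

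I do not expect a meaningful obstacle: the first part is essentially a restatement of Theorem~\ref{radicalofform}, and the second is a direct compatibility check between the definitions of \emph{first alcove}, \emph{wall}, and the residue-class structure recorded in Proposition~\ref{intervals}.
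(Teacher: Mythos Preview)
Your proposal is correct and follows essentially the same approach as the paper's own proof. The paper is terser---it simply says the first part follows from Theorem~\ref{radicalofform} and, for the second, notes that $n$-permissibility in the first alcove forces every $\varphi_n(\stt(i))$ to lie in the first alcove so that $\stt$ is alone in its residue class---but your explicit appeal to Proposition~\ref{innerproductg} for the ambient basis and to Proposition~\ref{intervals} for the singleton claim is exactly the underlying mechanism.
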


\begin{proof}
The first part follows directly from Theorem \ref{radicalofform}. The second part follows from the fact that when $\varphi_n(\lambda,k)$ is in the first alcove we have $\stt(i)$ in the first alcove for any $\stt\in \Std_k^n(\lambda)$ and $0\leq i\leq k$. This implies that any such $\stt$ is alone in its residue class. Hence $\tilde{f}_{\stt ,n} = f$ in this case. 
\end{proof}

 \section{Monotone Convergence of  Kronecker coefficients}

In this final section, we apply \hyperref[MAIN]{Theorem~\ref*{radicalofform}} to the study of the Kronecker coefficients.  
These coefficients appear in the classical representation theory of the symmetric group. Denote by $\mathfrak{S}_n$ the symmetric group of degree $n$. The simple $\mathbb{Q}\mathfrak{S}_n$-modules, known as the Specht modules, are indexed by partitions of $n$. We will use a slightly unusual notation for these partitions; the reason for this will become clear in what follows.

For a partition $\lambda = (\lambda_1, \lambda_2, \lambda_3, \ldots )$ and $n\in \mathbb{Z}_{\geq 0}$ we define 
$$\lambda_{[n]} = (n-|\lambda|, \lambda_1, \lambda_2, \lambda_3, \ldots ).$$
Note that for $n$ sufficiently large $\lambda_{[n]}$ is a partition of $n$. Moreover, any partition of $n$ can be written as $\lambda_{[n]}$ for some partition $\lambda$. 

For each partition $\lambda_{[n]}$ we denote by $\mathbf{S}(\lambda_{[n]})$ the corresponding Specht module for $\mathbb{Q}\mathfrak{S}_n$. Now for $\lambda_{[n]}, \mu_{[n]}, \nu_{[n]}$ partitions of $n$, the Kronecker coefficient $g_{\lambda_{[n]},\mu_{[n]}}^{\nu_{[n]}}$ is defined by
$$g_{\lambda_{[n]},\mu_{[n]}}^{\nu_{[n]}} = \dim_{\mathbb{Q}} {\rm Hom}_{\mathbb{Q}\mathfrak{S}_n} (\mathbf{S}(\lambda_{[n]})\otimes \mathbf{S}(\mu_{[n]}), \mathbf{S}(\nu_{[n]})).$$

Murnaghan discovered an amazing limiting phenomenon satisfied by the Kronecker coeficients; as we increase the length of the first row of the indexing partitions the sequence of Kronecker coefficients stabilises  (see \cite{murn,MR1243152,MR1725703} for various proofs). 
This is illustrated in the following example.

 \begin{example}
We have the following decomposition of tensor products of Specht modules:
$$\begin{array}{cll}   
 n=2 	&\quad &    \mathbf{S}(1^2) \otimes \mathbf{S}(1^2)  = \mathbf{S}(2)  \\
 n=3  &\quad &\mathbf{S}(2,1) \otimes \mathbf{S}(2,1)  = \mathbf{S}(3) \oplus \mathbf{S}(2,1) \oplus \mathbf{S}(1^3) \\
n=4  &\quad &\mathbf{S}(3,1) \otimes \mathbf{S}(3,1)  = \mathbf{S}(4) \oplus \mathbf{S}(3,1)\oplus \mathbf{S}(2,1^2) \oplus \mathbf{S}(2^2)  
\end{array}
$$ 
 at which point the product stabilises, i.e. for all $n\geq4$, we have 
 $$\mathbf{S}(n-1,1) \otimes \mathbf{S}(n-1,1) = \mathbf{S}(n) \oplus \mathbf{S}(n-1,1)\oplus \mathbf{S}(n-2,1^2) \oplus \mathbf{S}(n-2,2).$$
   \end{example}

The limit of the sequence $g_{\lambda_{[n]},\mu_{[n]}}^{\nu_{[n]}}$ as $n$ increases are known as the {\sf stable (or reduced) Kronecker coefficients} and denoted by $\bar{g}_{\lambda,\mu}^\nu$. So for $N$ sufficiently large we have
$$g_{\lambda_{[N+n]},\mu_{[N+n]}}^{\nu_{[N+n]}} = \bar{g}_{\lambda,\mu}^\nu \qquad \mbox{for all $n\geq 1$}.$$

This stability is rather startling from the point of view of the symmetric group.  
However,  in \cite{MR3314819}, the Kronecker  coefficients were given a new interpretation in the setting of the partition algebra  where this phenomenon becomes very natural. Using the Schur-Weyl duality between the symmetric group and the partition algebra we obtain a new interpretation of the Kronecker coefficients as follows. Let $\lambda_{[n]}, \mu_{[n]}, \nu_{[n]}$ be partitions of $n$ with $|\lambda|=r$ and $|\mu|=s$ and write $p=r+s$. We write $P_{2r,2s}^\mathbb{Q}(n) = P_{2r}^\mathbb{Q}(n)\otimes P_{2s}^\mathbb{Q}(n)\subseteq P_{2p}^\mathbb{Q}(n)$ and write ${\rm res}^{2p}_{2r,2s}$ for the restriction functor from   $P_{2p}^\mathbb{Q}(n)$-modules to  $P_{2r}^\mathbb{Q}(n)\otimes P_{2s}^\mathbb{Q}(n)$-modules. Then we have
 \begin{equation}\label{gw}
 g^{\nu_{[n]}}_{\lambda_{[n]},\mu_{[n]}} = \left\{ \begin{array}{ll}
\dim_{\mathbb{Q}} {\rm Hom}_{P_{2r,2s}^\mathbb{Q}(n)} (L_{2r,\mathbb{Q}}^n(\lambda) \boxtimes L_{2s,\mathbb{Q}}^n(\mu), {\rm res}^{2p}_{2r,2s} L_{2p,\mathbb{Q}}^n(\nu))
 & \mbox{if $|\nu|\leq p$}\\  0 & \mbox{otherwise} \end{array} \right.
 \end{equation}
(see   \cite[Section 3]{MR3314819}).
 
Note that as $|\lambda|=r$ and $|\mu|=s$ we have $L_{2r,\mathbb{Q}}^n(\lambda) = \Delta_{2r,\mathbb{Q}}^n(\lambda)$ and $L_{2s,\mathbb{Q}}^n(\mu) = \Delta_{2s,\mathbb{Q}}^n(\mu)$.
Now for sufficiently large values of $n$ the partition algebra $P_{2p}^\mathbb{Q}(n)$ is semisimple and $L_{2p,\mathbb{Q}}^n(\nu) = \Delta_{2p,\mathbb{Q}}^n(\nu)$  and so we have a new interpretation of the stable Kronecker coefficients as
$$\bar{g}_{\lambda,\mu}^\nu = \dim_{\mathbb{Q}} {\rm Hom}_{P_{2r,2s}^\mathbb{Q}(n)} (\Delta_{2r,\mathbb{Q}}^n(\lambda) \boxtimes \Delta_{2s,\mathbb{Q}}^n(\mu), {\rm res}^{2p}_{2r,2s} \Delta_{2p,\mathbb{Q}}^n(\nu))$$
for all $n$ sufficiently large (see  \cite[Corollary 3.2]{MR3314819}).

Brion proved in \cite[Section 3.4, Corollary 1]{MR1243152} that the sequence of Kronecker coefficients $g_{\lambda_{[n]}, \mu_{[n]}}^{\nu_{[n]}}$ not only stabilises but is also monotonic. More precisely, he showed that
 $$
 g^{\nu_{[n+1]}}_{\lambda_{[n+1]}, \mu_{[n+1]}}
 \geq 
 g^{\nu_{[n ]}}_{\lambda_{[n] }, \mu_{[n ]}} 
 $$
\noindent Briant asked whether this monotonicity could also be explained in the context of the partition algebra. In the rest of this section we will show that it does.

 Using (\ref{gw}) we need to study simple modules for the partition algebra. We first make the observation that we only need to consider simple modules labelled by partitions in the first alcove. More precisely we have the following lemma.

\begin{lemma}\label{kcimpliesfirstalcove}
Let $n,k\in \mathbb{Z}_{\geq 0}$ and let $\lambda$ be a partition with $|\lambda|\leq k$. Then $\lambda_{[n]}$ is a partition if and only if $\varphi_n(\lambda,2k)$ is in the first alcove.
\end{lemma}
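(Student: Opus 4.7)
The proof is essentially a direct unpacking of the definitions. The plan is to compute $\varphi_n(\lambda,2k)$ explicitly from (\ref{embedding}) and compare the inequality that defines the first alcove with the inequality that $\lambda_{[n]}$ is weakly decreasing.

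Concretely, since $2k$ is even, (\ref{embedding}) gives
\[
\varphi_n(\lambda,2k) = \lambda_{[n]} + \rho = (n-|\lambda|,\, \lambda_1 - 1,\, \lambda_2 - 2,\, \lambda_3 - 3, \ldots),
\]
so if we write $\varphi_n(\lambda,2k) = (x_0, x_1, x_2, \ldots)$, then $x_0 = n-|\lambda|$ and $x_i = \lambda_i - i$ for $i \geq 1$. By definition, $\varphi_n(\lambda,2k)$ lies in the first alcove precisely when $x_0 > x_1 > x_2 > \cdots$.

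The strict inequalities $x_i > x_{i+1}$ for $i \geq 1$ are equivalent to $\lambda_i - i > \lambda_{i+1} - (i+1)$, i.e.\ $\lambda_i \geq \lambda_{i+1}$, which holds automatically since $\lambda$ is a partition. Hence the first-alcove condition reduces to the single inequality $x_0 > x_1$, namely $n - |\lambda| > \lambda_1 - 1$, equivalently $n - |\lambda| \geq \lambda_1$. On the other hand, $\lambda_{[n]} = (n - |\lambda|, \lambda_1, \lambda_2, \ldots)$ is a partition exactly when its first part dominates its second, i.e.\ when $n - |\lambda| \geq \lambda_1$ (all other weak inequalities are inherited from $\lambda$ being a partition, and $n - |\lambda| \geq 0$ follows from $|\lambda| \leq k \leq n$ in the cases of interest, or is implicit in the assertion that $\lambda_{[n]}$ is a partition). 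The two conditions coincide, proving the lemma.

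There is no serious obstacle here; the only potential subtlety is making sure the argument does not secretly use a hypothesis on $n$ versus $|\lambda|$, and checking that the first-alcove definition does not impose a lower bound like $x_0 \geq 0$ that would fail to match with $\lambda_{[n]}$ being a weakly decreasing sequence of nonnegative integers. Inspecting the definition given before Remark~\ref{wallalcove}, the first alcove is defined purely by strict inequalities, so no such extra condition is imposed and the equivalence is exactly as claimed.
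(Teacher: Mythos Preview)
Your proof is correct and follows essentially the same approach as the paper's own proof: both reduce the equivalence to the single inequality $n-|\lambda|\geq \lambda_1$, noting that this is the same as $n-|\lambda|>\lambda_1-1$, which is exactly the first-alcove condition $x_0>x_1$. Your version is simply more explicit in spelling out why the remaining inequalities $x_i>x_{i+1}$ for $i\geq 1$ are automatic. (Your aside about $|\lambda|\leq k\leq n$ is unnecessary and not part of the hypotheses, but as you yourself note, nonnegativity of $n-|\lambda|$ is already forced by $n-|\lambda|\geq\lambda_1\geq 0$.)
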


\begin{proof}
We have that $\lambda_{[n]}$ is a partition if and only if $n-|\lambda| \geq \lambda_1$. But this holds precisely when $n-|\lambda| > \lambda_1 -1$ which is exactly the condition for $\varphi_n(\lambda,2k)$ to be in the first alcove.
\end{proof}

\begin{lemma}\label{finallemmaone}
Let $(\nu,k)\in \mathcal{Y}_k$ with $\varphi_n(\nu,k)$ in the first alcove. Then $\varphi_{n+1}(\nu,k)$ is also in the first alcove. Moreover, if $\stt \in \Std_k^n(\nu)$ then $\stt \in \Std_k^{n+1}(\nu)$.
\end{lemma}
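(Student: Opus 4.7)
The plan is a direct unwinding of the definition of $\varphi_n$ in equation~(\ref{embedding}). The key observation is that, for any vertex $(\lambda,k)\in\mathcal{Y}_k$, the coordinates of $\varphi_{n+1}(\lambda,k)$ and $\varphi_n(\lambda,k)$ agree in every slot from position $1$ onwards, while the zeroth coordinate increases by exactly $1$; symbolically $\varphi_{n+1}(\lambda,k)=\varphi_n(\lambda,k)+\varepsilon_0$, regardless of the parity of $k$.

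For the first claim, recall from \hyperref[wallalcove]{Remark~\ref*{wallalcove}} that the inequalities $x_1>x_2>x_3>\cdots$ are automatic on $\varphi_n(\mathcal{Y})$, so membership in the first alcove is characterised by the single inequality $x_0>x_1$. Since passing from $\varphi_n$ to $\varphi_{n+1}$ preserves $x_1$ and increases $x_0$ by one, the inequality $x_0>x_1$ is preserved (indeed strengthened). Hence $\varphi_{n+1}(\nu,k)$ is again in the first alcove.

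For the second claim, let $\stt\in\Std_k^n(\nu)$. Since $\varphi_n(\nu,k)$ lies in the first alcove, \hyperref[npermissiblepathas]{Definition~\ref*{npermissiblepathas}}(1) says that $\varphi_n(\stt(i))$ is in the first alcove for every $0\le i\le k$. Applying the argument of the previous paragraph to each pair $(\stt(i),i)$ in place of $(\nu,k)$, we conclude that $\varphi_{n+1}(\stt(i))$ is in the first alcove for every $0\le i\le k$. Combined with the fact (already established) that $\varphi_{n+1}(\nu,k)$ is in the first alcove, this is precisely the condition of \hyperref[npermissiblepathas]{Definition~\ref*{npermissiblepathas}}(1) required to conclude $\stt\in\Std_k^{n+1}(\nu)$.

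There is really no obstacle of substance here; the whole lemma reduces to the remark that shifting $n\mapsto n+1$ acts on $\varphi_n$ by the single translation $+\varepsilon_0$ in the ambient lattice $\mathbb{Z}^\infty$, and the first-alcove condition is monotone in $x_0$. The only mild care needed is to notice that the formula (\ref{embedding}) splits into an even and an odd case, so one verifies the translation statement in both parities; in either case the zeroth coordinate is $n-|\lambda|$ or $n-1-|\lambda|$ respectively, and in both cases it increases by $1$ when $n$ is replaced by $n+1$.
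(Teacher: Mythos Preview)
Your proof is correct and follows essentially the same approach as the paper: both unwind the definition of $\varphi_n$ from (\ref{embedding}), observe that the first-alcove condition amounts to the single inequality $x_0>x_1$, and check that this inequality is preserved when $n$ is replaced by $n+1$. Your formulation via the translation $\varphi_{n+1}(\lambda,k)=\varphi_n(\lambda,k)+\varepsilon_0$ is a concise repackaging of the paper's parity-by-parity verification of the same inequality.
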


\begin{proof}
We have $\varphi_n(\nu,k)$ in the first alcove if and only if 
$$\left\{ \begin{array}{ll} n-|\nu| > \nu_1-1 & \mbox{if $k$ is even}\\ n-1-|\nu| > \nu_1 -1 & \mbox{if $k$ is odd}\end{array} \right.$$
and $\varphi_{n+1}(\nu,k)$ in the first alcove when the same condition holds with $n$ replaced by $n+1$. So clearly we have that if $\varphi_n(\nu,k)$ is in the first alcove then so is $\varphi_{n+1}(\nu,k)$. Now $\stt\in \Std_k^n(\nu)$ precisely when every $\varphi_n(\stt(i))$ for $0\leq i \leq k$ belongs to the first alcove. Hence if $\stt \in \Std_k^n(\nu)$ then $\stt \in \Std_k^{n+1}(\nu)$ as required.
\end{proof}

\begin{lemma}\label{finallemmatwo}
Let $(\lambda,k)\in \mathcal{Y}_{k}$ with $\varphi_n(\lambda,k)$ in the first alcove and $|\lambda| = \lfloor k/2 \rfloor$. Then we have
$$F_{[(\lambda,k)]_n} = F_{[(\lambda,k)]_{n+1}} = F_{(\lambda,k)}.$$
Moreover, $L_{k,\mathbb{Q}}^n(\lambda)$ (respectively $L_{k,\mathbb{Q}}^{n+1}(\lambda)$)  is alone in its block and we have
$$
\Delta_{k,\mathbb{Q}}^n(\lambda) = L_{k,\mathbb{Q}}^n(\lambda) \quad \mbox{and} \quad 
\Delta_{k,\mathbb{Q}}^{n+1}(\lambda) = L_{k,\mathbb{Q}}^{n+1}(\lambda). $$
\end{lemma}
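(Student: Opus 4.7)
The plan is to extract two structural facts from the size hypothesis $|\lambda|=\lfloor k/2\rfloor$ combined with the first-alcove hypothesis: a rigidity statement about the shape of any path ending at $\lambda$, and the fact that $(\lambda,k)$ is alone in its block. Once these are in hand, the idempotent identity and the cell-equals-simple statement follow immediately.

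First I would observe that among the $k$ steps of a path $\stt\in \Std_k(\lambda)$ only the $\lfloor k/2\rfloor$ even-indexed ones can increase the size of the partition, so in order to reach size $\lfloor k/2\rfloor$ at level $k$ every even step must add a box and every odd step must leave the partition unchanged. Consequently $\stt(i)\subseteq \lambda$ with $|\stt(i)|=\lfloor i/2\rfloor$ for every $i$, and in particular $|\stt(i)|\le|\lambda|$ and $\stt(i)_1\le \lambda_1$. Unpacking the first-alcove hypothesis gives the inequality $n\ge|\lambda|+\lambda_1$ when $k$ is even and $n\ge|\lambda|+\lambda_1+1$ when $k$ is odd; the corresponding inequality at intermediate level $i$ then follows from these bounds, with an extra unit of slack supplied by the drop $|\stt(i)|\le|\lambda|-1$ whenever $i$ has parity opposite to $k$. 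Hence $\varphi_n(\stt(i),i)$ lies in the first alcove for every $0\le i\le k$, and Lemma~\ref{finallemmaone} yields the same conclusion with $n$ replaced by $n+1$.

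Next I would invoke the block classification. By Theorem~\ref{cfcells} the block of $(\lambda,k)$ is a chain $\lambda^{(1)},\ldots,\lambda^{(r)}$ of strictly increasing sizes and, by Lemma~\ref{reflectionimpliesadjacent}, $\varphi_n(\lambda^{(j)},k)$ lies in the $j$-th alcove. Since $\varphi_n(\lambda,k)$ is in the first alcove we must have $\lambda=\lambda^{(1)}$; since $|\lambda|=\lfloor k/2\rfloor$ is maximal on level $k$, no $\lambda^{(2)}$ of strictly greater size can exist, forcing $r=1$. Therefore $(\lambda,k)$ is alone in its block and $\Delta_{k,\mathbb{Q}}^n(\lambda)=L_{k,\mathbb{Q}}^n(\lambda)$; the same reasoning applied at $n+1$ (using the first-alcove propagation of the previous paragraph) yields $\Delta_{k,\mathbb{Q}}^{n+1}(\lambda)=L_{k,\mathbb{Q}}^{n+1}(\lambda)$.

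Finally, to obtain the idempotent identities I would combine Theorem~\ref{blocks} with the singleton-block conclusion: the $\sim_n$-equivalence class of $(\lambda,k)$ in $\mathcal{Y}_k$ reduces to $\{(\lambda,k)\}$, so any $\sts\in \Std_k$ with $\sts\approx_n \stt$ for some $\stt\in \Std_k(\lambda)$ must satisfy $\sts(k)=(\lambda,k)$, that is $\sts\in \Std_k(\lambda)$. Hence $[(\lambda,k)]_n=\Std_k(\lambda)$ and Proposition~\ref{ssidempotents} gives $F_{[(\lambda,k)]_n}=\sum_{\stt\in \Std_k(\lambda)}F_\stt=F_{(\lambda,k)}$; the identical argument at $n+1$ closes the proof. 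The main obstacle I anticipate is the bookkeeping in the second paragraph—carefully handling the parity-dependent $\pm 1$ shift in the alcove inequality across all four combinations of parities of $i$ and $k$—whereas everything else is structural and rests directly on the block-theoretic results recalled earlier.
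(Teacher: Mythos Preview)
Your argument is correct, but the order of deduction is essentially the reverse of the paper's. The paper works at the path level: from the rigidity observation (your first paragraph) it invokes Proposition~\ref{intervals} and Lemma~\ref{maximpliesnotwall} to show directly that every $\stt\in\Std_k(\lambda)$ is alone in its $n$-residue class, i.e.\ $[\stt]_n=\{\stt\}$; the idempotent identity $F_{[(\lambda,k)]_n}=F_{(\lambda,k)}$ is then immediate, and the block-singleton and $\Delta=L$ statements are read off from that. You instead go top-down through Martin's block classification: the maximal-degree condition $|\lambda|=\lfloor k/2\rfloor$ forces the chain in Theorem~\ref{cfcells} to have length one, whence $(\lambda,k)$ is alone in its block, whence (via Theorem~\ref{blocks}) its $\sim_n$-class is a singleton, and only then do you recover $[(\lambda,k)]_n=\Std_k(\lambda)$ and the idempotent identity. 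Your route is slick and avoids the residue-class combinatorics altogether, at the cost of leaning on the deeper structural Theorem~\ref{cfcells}; the paper's route is more elementary and yields the stronger pointwise fact that each residue class $[\stt]_n$ is a singleton, not merely that $[(\lambda,k)]_n$ coincides with $\Std_k(\lambda)$.

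One remark: your second paragraph (the parity-by-parity verification that every $\varphi_n(\stt(i))$ lies in the first alcove) is correct, but it is not actually used anywhere in your own argument. For the block-singleton step you only need the \emph{endpoint} $\varphi_n(\lambda,k)$ in the first alcove, which is a hypothesis, and for the $n+1$ case Lemma~\ref{finallemmaone} applied to the endpoint alone already suffices. So the bookkeeping you flag as the main obstacle can simply be dropped. (It is, however, essentially what the paper's route needs in order to apply Proposition~\ref{intervals}, so the work is not wasted---it just belongs to the other argument.)
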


\begin{proof}
Let $\stt\in \Std_{k}(\lambda)$. As $|\lambda| = \lfloor k/2 \rfloor$ we have that every step in $\stt$ is of the form
$\varphi_n(\stt(i))=\varphi_n(\stt(i-1)) +\varepsilon_j$ for some $j\geq 1$ if $i$ is even and $\varphi_n(\stt(i)) = \varphi_n(\stt(i-1)) -\varepsilon_0$ is $i$ is odd (and similarly for $n+1$). As $\varphi_n(\lambda,k)$ is in the first alcove, so is $\varphi_{n+1}(\lambda,k)$, by Lemma \ref{finallemmaone}.  It follows from Proposition \ref{intervals} and Lemma \ref{maximpliesnotwall} that $[\stt]_n=[\stt]_{n+1} = \{ \stt\}$. Note that this holds for any $\stt\in \Std_k(\lambda)$ and so we get $F_{[(\lambda,k)]_n} = F_{[(\lambda,k)]_{n+1}} = F_{(\lambda,k)}$ by definition. 
This implies that the simple module $ L_{k,\mathbb{Q}}^n(\lambda)$ (respectively $ L_{k,\mathbb{Q}}^{n+1}(\lambda) $) is alone in its block and hence we get $\Delta_{k,\mathbb{Q}}^n(\lambda) = L_{k,\mathbb{Q}}^n(\lambda)$ (respectively $\Delta_{k,\mathbb{Q}}^{n+1}(\lambda) = L_{k,\mathbb{Q}}^{n+1}(\lambda) $) as required.
\end{proof}

\begin{lemma}\label{finallemmathree}
Let $(\nu,k)\in \mathcal{Y}_k$. Then we have
$$\mathbb{F}{\rm-span} \{ \tilde{f}_{\stt , n} \mid \stt \in \Std_k(\nu)\setminus \Std_k^n(\nu)\} = \mathbb{F}{\rm-span} \{ f_{\stt}\mid \stt \in \Std_k(\nu)\setminus \Std_k^n(\nu)\}.$$
\end{lemma}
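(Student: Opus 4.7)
The plan is to reduce the equality of spans to the combinatorial invariance that, for $\sts, \stt \in \Std_k(\nu)$ with $\sts \approx_n \stt$, the path $\stt$ is $n$-permissible if and only if $\sts$ is; a unitriangular change-of-basis argument will then close the proof. Concretely, inverting the unitriangular relation of Proposition \ref{s-n-d}(1) gives $m_\stt = f_\stt + \sum_{\sts \succ \stt,\, \sts \in \Std_k(\nu)} a_\sts f_\sts$ for some $a_\sts \in \mathbb{F}$. Multiplying by $F_{[\stt]_n} = \sum_{\stu \approx_n \stt} F_\stu$ and applying $f_\sts F_\stu = \delta_{\sts\stu} f_\sts$ (which follows from Proposition \ref{s-n-d}(5) together with Proposition \ref{ssidempotents}, since $F_\stu$ annihilates $\Delta_{k,\mathbb{F}}^z(\nu)$ whenever $\stu$ ends at a partition different from $\nu$) yields
$$
\tilde{f}_{\stt,n} = f_\stt + \sum_{\substack{\sts \succ \stt,\, \sts \in \Std_k(\nu) \\ \sts \approx_n \stt}} a_\sts f_\sts.
$$

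The heart of the argument is the invariance claim. When $\varphi_n(\nu,k)$ lies on a wall, both sides of the lemma are zero. When $\varphi_n(\nu,k)$ lies in the first alcove, Proposition \ref{intervals} forces any boundary vertex $\stt(a) = \sts(a)$ of a differing interval to lie on a wall and hence outside any alcove, so if $\stt$ is permissible then necessarily $\sts = \stt$. When $\varphi_n(\nu,k)$ lies in the $j$-th alcove for some $j \geq 2$, consider the last differing interval $[a,b]$ provided by Proposition \ref{intervals}. I first rule out $b = k$: in that case the interior of the interval would place $\stt(k-1)$ and $\sts(k-1)$ in alcoves $j'$ and $j'+1$, while Proposition \ref{edgealcove} constrains the vertices adjacent to $\nu \in$ alcove $j$ to lie in alcove $j$ or on its bordering walls, a contradiction. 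Hence $b < k$, and $\stt$ and $\sts$ agree on the common tail $[b,k]$; since this tail starts on a wall (outside alcove $j$), the last entry of either path into alcove $j$ occurs strictly inside $(b,k]$. As $\stt$ and $\sts$ coincide throughout this range, they enter alcove $j$ for the last time at the same step and through the same wall, so the $n$-permissibility criterion (whether the entry is from the $(j-1)$-th wall) agrees for both.

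The invariance then ensures that for $\stt \in \Std_k(\nu) \setminus \Std_k^n(\nu)$, every $\sts$ appearing in the above expansion also lies in $\Std_k(\nu) \setminus \Std_k^n(\nu)$, giving the inclusion $\subseteq$. The transition matrix from $\{f_\sts\}$ to $\{\tilde{f}_{\sts,n}\}$ indexed by this set, ordered by $\succeq$, is upper unitriangular and hence invertible, yielding the reverse inclusion $\supseteq$. The main obstacle is the $j \geq 2$ case, where ruling out $b = k$ and locating the "last entry" into alcove $j$ inside the common tail requires a careful simultaneous use of Propositions \ref{intervals} and \ref{edgealcove}.
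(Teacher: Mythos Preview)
Your proof is correct and follows essentially the same approach as the paper: establish the unitriangular relation $\tilde f_{\stt,n}=f_\stt+\sum_{\sts\succ\stt,\ \sts\approx_n\stt}a_\sts f_\sts$, prove that $n$-permissibility is constant on $\approx_n$-classes in $\Std_k(\nu)$, and conclude by unitriangularity. The paper compresses your invariance argument into a single observation drawn from Proposition~\ref{intervals}: if $\sts\approx_n\stt$ then $\sts(i)=\stt(i)$ whenever $\varphi_n(\stt(i))$ lies on a wall, from which permissibility-invariance is declared to follow. Your case analysis (ruling out $b=k$ via Proposition~\ref{edgealcove} and locating the last alcove-entry in the common tail) is exactly what is needed to justify that implication, so you have in effect supplied the details the paper omits rather than taken a different route.
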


\begin{proof}
Recall that for any $\stt \in \Std_k(\nu)$ we have
\begin{equation}\label{changeofbasis}
\tilde f_{\stt , n } = f_{\stt } +\sum_{  \begin{subarray}{c}\sts \succ \stt \\ \sts \approx_n\stt \end{subarray}} a_{\sts } f_{\sts }  \end{equation}
where the sum is over $\sts \in \Std_k(\nu)$. Now using Proposition \ref{intervals}, if $\sts \approx_n \stt$ then $\sts(i)=\stt(i)$ whenever $\varphi_n(\stt(i))$ is on a wall. It follows that $\stt$ is $n$-permissible if and only if $\sts$ is $n$-permissible. 
Now the result follows from the fact that the change of basis given in (\ref{changeofbasis}) is unitriangular.
\end{proof}
 
\begin{proposition}\label{finalprop}
Let $(\nu,k)\in \mathcal{Y}_k$ with $\varphi_n(\nu,k)$ in the first alcove. Define $\mathcal{O}_n$- and $\mathcal{O}_{n+1}$-modules
\begin{eqnarray*}
\Delta_{k,\mathcal{O}_n}^{z,n}(\nu) &=& \mathcal{O}_n{\rm-span} \{ \tilde{f}_{\stt,n}\mid \stt \in \Std_k(\nu) \setminus \Std_k^n(\nu) \},\\
\Delta_{k,\mathcal{O}_{n+1}}^{z,n+1}(\nu) &=& \mathcal{O}_{n+1}{\rm-span} \{ \tilde{f}_{\stt,n+1} \mid \stt \in \Std_k(\nu) \setminus \Std_k^{n+1}(\nu) \}
\end{eqnarray*}
and $\mathbb{F}$-vector spaces $\Delta_{k,\mathbb{F}}^{z,n}(\nu) = \Delta_{k,\mathcal{O}_n}^{z,n}(\nu) \otimes_{\mathcal{O}_n} \mathbb{F}$ and 
$\Delta_{k,\mathbb{F}}^{z,n+1}(\nu) = \Delta_{k,\mathcal{O}_{n+1}}^{z,n+1}(\nu) \otimes_{\mathcal{O}_{n+1}} \mathbb{F}$.
Then we have inclusions of $\mathbb{F}$-vector spaces 
$$\Delta_{k,\mathbb{F}}^{z,n+1}(\nu)  \subseteq \Delta_{k,\mathbb{F}}^{z,n}(\nu) \subseteq \Delta_{k,\mathbb{F}}^{z}(\nu)  .$$
\end{proposition}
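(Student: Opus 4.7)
The strategy is to base-change everything to $\mathbb{F}$ and translate the spans defining $\Delta_{k,\mathbb{F}}^{z,n}(\nu)$ and $\Delta_{k,\mathbb{F}}^{z,n+1}(\nu)$ into spans in terms of the seminormal basis $\{f_{\stt}\mid \stt\in\Std_k(\nu)\}$ of $\Delta_{k,\mathbb{F}}^z(\nu)$. Once this is done, both inclusions become purely combinatorial statements about the index sets $\Std_k(\nu)\setminus\Std_k^n(\nu)$ and $\Std_k(\nu)\setminus\Std_k^{n+1}(\nu)$.

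First I would observe that, since $\mathcal{O}_n,\mathcal{O}_{n+1}\hookrightarrow\mathbb{F}$, the elements $\tilde f_{\stt,n}$ and $\tilde f_{\stt,n+1}$ all live naturally in the ambient $\mathbb{F}$-vector space $\Delta_{k,\mathbb{F}}^z(\nu)$, so the three spaces in the statement can be compared as $\mathbb{F}$-subspaces of this common ambient space. Applying \cref{finallemmathree} with parameter $n$ (and noting that the lemma has no alcove hypothesis on $(\nu,k)$) gives
\[
\Delta_{k,\mathbb{F}}^{z,n}(\nu)\;=\;\mathbb{F}\text{-span}\{f_{\stt}\mid \stt\in\Std_k(\nu)\setminus\Std_k^n(\nu)\},
\]
and the same lemma applied with parameter $n+1$ gives the analogous description of $\Delta_{k,\mathbb{F}}^{z,n+1}(\nu)$.

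The outer inclusion $\Delta_{k,\mathbb{F}}^{z,n}(\nu)\subseteq\Delta_{k,\mathbb{F}}^z(\nu)$ is then immediate from \cref{s-n-d}\eqref{s-n-d:2}, which says that $\{f_{\stt}\mid \stt\in\Std_k(\nu)\}$ is an $\mathbb{F}$-basis of $\Delta_{k,\mathbb{F}}^z(\nu)$. For the inner inclusion, I would invoke \cref{finallemmaone}, which (using that $\varphi_n(\nu,k)$ is in the first alcove) yields the containment $\Std_k^n(\nu)\subseteq\Std_k^{n+1}(\nu)$. Passing to complements in $\Std_k(\nu)$ reverses this to $\Std_k(\nu)\setminus\Std_k^{n+1}(\nu)\subseteq\Std_k(\nu)\setminus\Std_k^n(\nu)$, and so the $\mathbb{F}$-span indexed by the smaller set is contained in the one indexed by the larger set, giving $\Delta_{k,\mathbb{F}}^{z,n+1}(\nu)\subseteq\Delta_{k,\mathbb{F}}^{z,n}(\nu)$.

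There is no real obstacle here: once \cref{finallemmathree} is in hand, the whole proposition is a direct bookkeeping consequence of the monotonicity of $n$-permissibility supplied by \cref{finallemmaone}. The only small point worth checking is that we really are comparing $\mathbb{F}$-subspaces of a single ambient space, which is built into the definitions since all the seminormal idempotents $F_{\sts}$ sit inside $P_k^{\mathbb{F}}(z)$.
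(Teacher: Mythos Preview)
Your proof is correct and follows exactly the approach of the paper, which simply states that the result ``follows directly from Lemmas~\ref{finallemmaone} and~\ref{finallemmathree}.'' You have spelled out precisely how these two lemmas combine: \cref{finallemmathree} rewrites both spans in the common seminormal basis $\{f_\stt\}$, and \cref{finallemmaone} gives the containment $\Std_k^n(\nu)\subseteq\Std_k^{n+1}(\nu)$ needed for the inner inclusion.
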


\begin{proof}
This follows directly from Lemmas \ref{finallemmaone} and \ref{finallemmathree}.
\end{proof}

\begin{corollary}(see \cite[Section 3.4, Corollary 1]{MR1243152})
Let  $\lambda_{[n]},\mu_{[n]}, \nu_{[n]}$ be partitions of $n$, then 
$$g_{\lambda_{[n]}, \mu_{[n]}}^{\nu_{[n]}} \leq g_{\lambda_{[n+1]}, \mu_{[n+1]}}^{\nu_{[n+1]}}.$$
\end{corollary}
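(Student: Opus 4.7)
The plan is to combine equation~\eqref{gw} (which interprets both sides as dimensions of partition-algebra $\Hom$-spaces), the projectivity of the modules $L^m_{2r,\mathbb{Q}}(\lambda)\boxtimes L^m_{2s,\mathbb{Q}}(\mu)$ for $m\in\{n,n+1\}$ supplied by \Cref{kcimpliesfirstalcove,finallemmaone,finallemmatwo}, and the $\mathbb{F}$-subspace inclusion $\Delta^{z,n+1}_\mathbb{F}(\nu)\subseteq \Delta^{z,n}_\mathbb{F}(\nu)$ of \Cref{finalprop} to reduce the monotonicity inequality to a dimension count inside a single generic $\Hom$-space over $\mathbb{F}$.

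Set $r=|\lambda|,\,s=|\mu|,\,p=r+s$. \Cref{kcimpliesfirstalcove} together with \Cref{finallemmaone} places $\varphi_m(\lambda,2r)$, $\varphi_m(\mu,2s)$, $\varphi_m(\nu,2p)$ in the first alcove for $m\in\{n,n+1\}$, and \Cref{finallemmatwo} then gives $\Delta^m_{2r,\mathbb{Q}}(\lambda)=L^m_{2r,\mathbb{Q}}(\lambda)$ (alone in its block and hence projective) and similarly for $\mu$, so that $L^m(\lambda)\boxtimes L^m(\mu)$ is projective over $P^\mathbb{Q}_{2r,2s}(m)$. Applying the exact functor $\Hom(L^m(\lambda)\boxtimes L^m(\mu),\res^{2p}_{2r,2s}-)$ to the short exact sequence $0\to\rad\Delta^m_{2p,\mathbb{Q}}(\nu)\to\Delta^m_{2p,\mathbb{Q}}(\nu)\to L^m_{2p,\mathbb{Q}}(\nu)\to 0$, together with the cell-module filtration of $\res^{2p}_{2r,2s}\Delta^m_{2p,\mathbb{Q}}(\nu)$ (whose combinatorial multiplicities do not depend on $m$) and the block-isolation of $\lambda,\mu$ (which forces $\Hom(L(\lambda),\Delta(\lambda'))=\delta_{\lambda,\lambda'}\mathbb{Q}$), yields
\[
g^{\nu_{[m]}}_{\lambda_{[m]},\mu_{[m]}} \;=\; \bar g^\nu_{\lambda,\mu} \;-\; d^m, \qquad d^m := \dim_\mathbb{Q}\Hom\!\bigl(L^m(\lambda)\boxtimes L^m(\mu),\,\res\rad\Delta^m_{2p,\mathbb{Q}}(\nu)\bigr).
\]
The corollary therefore reduces to $d^{n+1}\leq d^n$.

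To establish this, I would work inside the generic $\Hom$-space
\[
V := \Hom_{P^\mathbb{F}_{2r,2s}(z)}\!\bigl(\Delta^z_\mathbb{F}(\lambda)\boxtimes\Delta^z_\mathbb{F}(\mu),\,\res\Delta^z_\mathbb{F}(\nu)\bigr),
\]
of $\mathbb{F}$-dimension $\bar g^\nu_{\lambda,\mu}$, together with its $\mathcal{O}_m$-lattice $V^{\mathcal{O}_m}$ obtained by replacing $\mathbb{F}$ by $\mathcal{O}_m$; flat base change shows $V^{\mathcal{O}_m}$ is free of rank $\bar g^\nu_{\lambda,\mu}$ and identifies $V^{\mathcal{O}_m}\otimes_{\mathcal{O}_m}\mathbb{F}=V$ and $V^{\mathcal{O}_m}\otimes_{\mathcal{O}_m}\mathbb{Q}=\Hom(\Delta^m(\lambda)\boxtimes\Delta^m(\mu),\res\Delta^m(\nu))$. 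The $\mathcal{O}_m$-saturated submodule
\[
\tilde V^{m,\mathcal{O}_m} := \{\phi\in V^{\mathcal{O}_m}\,:\,\im\phi\subseteq \Delta^{z,m}_{\mathcal{O}_m}(\nu)\}
\]
(saturated because $\Delta^{z,m}_{\mathcal{O}_m}(\nu)$ is an $\mathcal{O}_m$-direct summand of $\Delta^z_{\mathcal{O}_m}(\nu)$ via the splitting by the basis $\{\tilde f_{\stt,m}\}_{\stt\in\Std_{2p}(\nu)}$) base-changes to the $\mathbb{F}$-subspace $\tilde V^m := \{\phi\in V : \phi(\Delta^z_\mathbb{F}(\lambda)\boxtimes\Delta^z_\mathbb{F}(\mu))\subseteq\Delta^{z,m}_\mathbb{F}(\nu)\}$ of $V$. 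Identifying $\tilde V^{m,\mathcal{O}_m}\otimes_{\mathcal{O}_m}\mathbb{Q}$ with $\Hom(L^m(\lambda)\boxtimes L^m(\mu),\res\rad\Delta^m(\nu))$ then yields $\dim_\mathbb{F}\tilde V^m = d^m$, and \Cref{finalprop} gives $\tilde V^{n+1}\subseteq \tilde V^n$, whence $d^{n+1} = \dim_\mathbb{F}\tilde V^{n+1}\leq \dim_\mathbb{F}\tilde V^n = d^n$, as required. The layers $\tilde V^n/\tilde V^{n+1}$ of the resulting chain realise the promised manifestly positive interpretation of the successive differences $g^{\nu_{[n+1]}}_{\lambda_{[n+1]},\mu_{[n+1]}} - g^{\nu_{[n]}}_{\lambda_{[n]},\mu_{[n]}}$.

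The main obstacle is the surjectivity in the identification $\tilde V^{m,\mathcal{O}_m}\otimes\mathbb{Q}\cong \Hom(L^m(\lambda)\boxtimes L^m(\mu),\res\rad\Delta^m(\nu))$: given $\psi$ into $\res\rad\Delta^m(\nu)$, the $\mathcal{O}_m$-integral lifts produced by projectivity have image only in $\Delta^{z,m}_{\mathcal{O}_m}(\nu) + (z-m)\Delta^z_{\mathcal{O}_m}(\nu)$ rather than in $\Delta^{z,m}_{\mathcal{O}_m}(\nu)$ itself. The subtlety is that $\Delta^{z,m}_{\mathcal{O}_m}(\nu)$ is not a $P^{\mathcal{O}_m}(z)$-submodule of $\Delta^z_{\mathcal{O}_m}(\nu)$ (it acquires its submodule structure only after specialization at $z=m$). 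One resolves this by iterating a Nakayama-type correction in the complementary $\mathcal{O}_m$-summand spanned by $\{\tilde f_{\stt,m} : \stt\in\Std_{2p}^m(\nu)\}$, exploiting its $\mathcal{O}_m$-freeness to adjust any generic lift by a sequence of elements of $(z-m)^kV^{\mathcal{O}_m}$ so as to land in $\tilde V^{m,\mathcal{O}_m}$ on the nose.
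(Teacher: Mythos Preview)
Your overall strategy---reduce to showing $d^{n+1}\le d^n$ where $d^m=\dim_\mathbb{Q}\Hom(L^m(\lambda)\boxtimes L^m(\mu),\res\rad\Delta^m_{2p,\mathbb{Q}}(\nu))$, and then compare via the $\mathbb{F}$-inclusion of \Cref{finalprop}---is exactly the paper's. The difference is in execution: the paper never introduces $\Hom$-lattices at all. Since $L^m(\lambda)\boxtimes L^m(\mu)$ is alone in its block, the $\Hom$-functor is realised (up to the fixed scalar $\dim(\Delta(\lambda)\boxtimes\Delta(\mu))$) by right multiplication by the block idempotent $e:=F_{(\lambda,2r)}\otimes F_{(\mu,2s)}$, and by \Cref{finallemmatwo} this $e$ already lies in $P^{\mathcal{O}_n}_{2p}(z)\cap P^{\mathcal{O}_{n+1}}_{2p}(z)$ before specialisation. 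The paper then simply forms the $\mathcal{O}_m$-lattice $\Delta^{z,m}_{2p,\mathcal{O}_m}(\nu)\cdot e$: it is free (a submodule of a free module over the DVR $\mathcal{O}_m$), its $\mathbb{Q}$-specialisation is $(\rad\Delta^m_{2p,\mathbb{Q}}(\nu))\cdot(G_{(\lambda,2r)}\otimes G_{(\mu,2s)})\cong(\bar g^\nu_{\lambda,\mu}-g^{\nu_{[m]}}_{\lambda_{[m]},\mu_{[m]}})\,(\Delta(\lambda)\boxtimes\Delta(\mu))$, and its rank equals $\dim_\mathbb{F}(\Delta^{z,m}_{2p,\mathbb{F}}(\nu)\cdot e)$. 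The inclusion of \Cref{finalprop} then gives $\Delta^{z,n+1}_{2p,\mathbb{F}}(\nu)\cdot e\subseteq\Delta^{z,n}_{2p,\mathbb{F}}(\nu)\cdot e$ and hence the inequality directly. Because one only multiplies an $\mathcal{O}_m$-lattice by a fixed integral element of the algebra, the fact that $\Delta^{z,m}_{2p,\mathcal{O}_m}(\nu)$ is not a $P^{\mathcal{O}_m}_{2p}(z)$-submodule is irrelevant, and the lifting obstacle you flag never arises.

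By contrast, your route through the lattices $\tilde V^{m,\mathcal{O}_m}$ genuinely runs into that obstacle, and the Nakayama-type fix you sketch is not convincing as stated: the successive corrections you describe are only $\mathcal{O}_m$-linear maps into the complementary summand, not $P^{\mathcal{O}_m}_{2r,2s}(z)$-homomorphisms, so there is no evident mechanism for adjusting a lift within $V^{\mathcal{O}_m}$ to force it into $\tilde V^{m,\mathcal{O}_m}$. There is also a more basic mismatch in the $\mathbb{F}$-picture. Your $\tilde V^m$ is defined by the \emph{containment} condition $\im\phi\subseteq\Delta^{z,m}_{2p,\mathbb{F}}(\nu)$, which over $\mathbb{F}$ counts copies of the simple lying in the \emph{intersection} $\Delta^{z,m}_{2p,\mathbb{F}}(\nu)\cap\Delta^z_{2p,\mathbb{F}}(\nu)e$; the quantity that actually matches $d^m$ is the dimension of the \emph{projection} $\Delta^{z,m}_{2p,\mathbb{F}}(\nu)\cdot e$. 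Since $\Delta^{z,m}_{2p,\mathbb{F}}(\nu)$ is not a $P^\mathbb{F}_{2r,2s}(z)$-submodule of $\res\Delta^z_{2p,\mathbb{F}}(\nu)$, intersection and projection with the isotypic component need not agree, so even the identity $\dim_\mathbb{F}\tilde V^m=d^m$ is suspect. All of this is bypassed by working with $(\,\cdot\,)e$ directly as the paper does.
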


\begin{proof}
Let $r=|\lambda|$, $s=|\mu|$ and $p=r+s$. We can assume that $|\nu| \leq p$ as otherwise $g_{\lambda_{[n]},\mu_{[n]}}^{\nu_{[n]}} = g_{\lambda_{[n+1]},\mu_{[n+1]}}^{\nu_{[n+1]}} = 0$.
Using Theorem \ref{radicalofform} we have
$$\Delta_{2p,\mathcal{O}_n}^{z,n}(\nu) \otimes_{\mathcal{O}_n} \mathbb{Q} = \rad \Delta_{2p,\mathbb{Q}}^n(\nu) \quad
\mbox{and} 
\quad \Delta_{2p,\mathcal{O}_n}^{z,n+1}(\nu) \otimes_{\mathcal{O}_{n+1}} \mathbb{Q} = \rad \Delta_{2p,\mathbb{Q}}^{n+1}(\nu).$$
Now consider $F_{(\lambda,2r)} \otimes F_{(\mu,2s)} \in P_{2r}^\mathbb{F}(z) \otimes P_{2s}^\mathbb{F}(z) \subseteq P_{2p}^{\mathbb{F}} (z)$. Using Lemma \ref{finallemmatwo} we have
$$F_{(\lambda,2r)} \otimes F_{(\mu,2s)} \in P_{2p}^{\mathcal{O}_n}(z) \cap P_{2p}^{\mathcal{O}_{n+1}}(z)$$
and 
\begin{eqnarray*}
\Delta_{2p,\mathcal{O}_n}^{z,n}(\nu) (F_{(\lambda,2r)} \otimes F_{(\mu,2s)})\otimes_{\mathcal{O}_n} \mathbb{Q} &=& (\rad \Delta_{2p,\mathbb{Q}}^n(\nu))(G_{(\lambda,2r)} \otimes G_{(\mu,2s)})\\
&=& (\bar{g}_{\lambda, \mu}^{\nu}-g_{\lambda_{[n]}, \mu_{[n]}}^{\nu_{[n]}}) (\Delta_{2r,\mathbb{Q}}^n(\lambda) \otimes \Delta_{2s,\mathbb{Q}}^n (\mu)).
\end{eqnarray*}
Similarly we have
$$\Delta_{2p,\mathcal{O}_{n+1}}^{z,n+1}(\nu) (F_{(\lambda,2r)} \otimes F_{(\mu,2s)})\otimes_{\mathcal{O}_{n+1}} \mathbb{Q} = (\bar{g}_{\lambda, \mu}^{\nu}-g_{\lambda_{[n+1]}, \mu_{[n+1]}}^{\nu_{[n+1]}}) (\Delta_{2r,\mathbb{Q}}^{n+1}(\lambda) \otimes \Delta_{2s,\mathbb{Q}}^{n+1} (\mu)).$$
Now by Proposition \ref{finalprop} we have
\begin{eqnarray*}
\dim_\mathbb{Q} \Delta_{2p,\mathcal{O}_n}^{z,n}(\nu)\otimes_{\mathcal{O}_n} \mathbb{Q} &=& \dim_{\mathbb{F}} \Delta_{2p,\mathbb{F}}^{z,n}(\nu) \\
&\geq & \dim_{\mathbb{F}} \Delta_{2p,\mathbb{F}}^{z,n+1} (\nu) \\
&=& \dim_{\mathbb{Q}} \Delta_{2p,\mathcal{O}_{n+1}}^{z,n+1}(\nu) \otimes_{\mathcal{O}_{n+1}} \mathbb{Q}.
\end{eqnarray*}
It follows that 
$$\dim_\mathbb{Q} \Delta_{2p,\mathcal{O}_n}^{z,n}(\nu)(F_{(\lambda,2r)}\otimes F_{(\mu,2s)})\otimes_{\mathcal{O}_n} \mathbb{Q}  \geq \dim_{\mathbb{Q}} \Delta_{2p,\mathcal{O}_{n+1}}^{z,n+1}(\nu) (F_{(\lambda,2r)}\otimes F_{(\mu,2s)})\otimes_{\mathcal{O}_{n+1}} \mathbb{Q}.$$
Now, as 
$$\dim_{\mathbb{Q}} \Delta_{2r,\mathbb{Q}}^n(\lambda) \otimes \Delta_{2s,\mathbb{Q}}^n(\mu) = \dim_{\mathbb{Q}} \Delta_{2r,\mathbb{Q}}^{n+1}(\lambda) \otimes \Delta_{2s,\mathbb{Q}}^{n+1}(\mu),$$
we get
$$\bar{g}_{\lambda, \mu}^\nu - g_{\lambda_{[n]}, \mu_{[n]}}^{\nu_{[n]}} \geq \bar{g}_{\lambda, \mu}^\nu - g_{\lambda_{[n+1]}, \mu_{[n+1]}}^{\nu_{[n+1]}}$$
and hence  $g_{\lambda_{[n]}, \mu_{[n]}}^{\nu_{[n]}} \leq g_{\lambda_{[n+1]}, \mu_{[n+1]}}^{\nu_{[n+1]}}$ as required.
\end{proof}

%

 \bibliographystyle{amsalpha}

\def\cprime{$'$} \def\cprime{$'$}
\providecommand{\bysame}{\leavevmode\hbox to3em{\hrulefill}\thinspace}
\providecommand{\MR}{\relax\ifhmode\unskip\space\fi MR }
\providecommand{\MRhref}[2]{\href{http://www.ams.org/mathscinet-getitem?mr=#1}{#2}
}
\providecommand{\href}[2]{#2}

\end{document}